\newfont{\cyr}{wncyr10 scaled 1100}
\newfont{\cyrr}{wncyr9 scaled 1000}
\theoremstyle{plain}
\newtheorem{theorem}{Theorem}[section]
\newtheorem{proposition}[theorem]{Proposition}
\newtheorem{lemma}[theorem]{Lemma}
\newtheorem{corollary}[theorem]{Corollary}
\theoremstyle{definition}
\newtheorem{definition}[theorem]{Definition}
\theoremstyle{remark}
\newtheorem{remark}[theorem]{Remark}
\newcommand{\Q}{\mathbb Q}
\newcommand{\Z}{\mathbb Z}
\newcommand{\C}{\mathbb C}
\DeclareMathOperator{\Spec}{Spec}
\DeclareMathOperator{\End}{End}
\DeclareMathOperator{\Emb}{Emb}
\DeclareMathOperator{\Hom}{Hom}
\DeclareMathOperator{\Gal}{Gal}
\DeclareMathOperator{\GL}{GL}
\DeclareMathOperator{\PGL}{PGL}
\DeclareMathOperator{\SL}{SL}
\DeclareMathOperator{\M}{M}
\DeclareMathOperator{\CH}{CH}
\DeclareMathOperator{\AJ}{AJ}
\DeclareMathOperator{\Ext}{Ext}
\newcommand{\ord}{\mathrm{ord}}
\newcommand{\unr}{\mathrm{unr}}
\definecolor{Indigo}{rgb}{0.2,0.1,0.7}
\definecolor{Violet}{rgb}{0.5,0.1,0.7}
\definecolor{White}{rgb}{1,1,1}
\definecolor{Green}{rgb}{0.1,0.9,0.2}
\newcommand{\longmono}{\mbox{$\lhook\joinrel\longrightarrow$}}
\newcommand{\mat}[4]{\left(\begin{array}{cc}#1&#2\\#3&#4\end{array}\right)}
\newcommand{\smallmat}[4]{\bigl(\begin{smallmatrix}#1&#2\\#3&#4\end{smallmatrix}\bigr)}
\newcommand{\cO}{{\mathcal O}}
\newcommand{\HH}{{\mathcal H}}
\newcommand{\PP}{\mathbb P}
\newcommand{\cl}{c\ell}
\begin{document}
\title[]{Generalized Heegner cycles on Mumford curves}
\author{Matteo Longo and Maria Rosaria Pati}
\maketitle
\begin{abstract}
We study generalised Heegner cycles, originally introduced by Bertolini-Darmon-Prasanna for modular curves in \cite{BDP}, in the context of Mumford curves. 
The main result of this paper relates generalized Heegner cycles with the two variable anticyclotomic $p$-adic $L$-function attached to a Coleman family $f_\infty$ and an imaginary quadratic field $K$, constructed in \cite{BD} and \cite{Sev}. While in \cite{BD} and \cite{Sev} only the restriction to the central critical line of this 2 variable $p$-adic $L$-function is considered, our generalised Heegner cycles allow us to study the restriction of this function to non-central critical lines. The main result expresses the derivative along the weight variable of this anticyclotomic $p$-adic $L$-function restricted to non necesserely central critical lines as a combination of the image of generalized Heegner cycles under a $p$-adic Abel-Jacobi map. In studying generalised Heegner cycles in the context of Mumford curves, we also obtain an  extension of a result of Masdeu \cite{Masdeu} for the (one variable) anticyclotomic $p$-adic $L$-function of a modular form $f$ and $K$ at non-central critical integers.  
\end{abstract}

\tableofcontents
\section{Introduction}

Generalized Heegner cycles have been introduced by Bertolini-Darmon-Prasanna in \cite{BDP} with the aim of studying certain anticyclotomic $p$-adic $L$-functions of modular forms of level $\Gamma_1(N)$, where $p\nmid N$ is a prime number, twisted by Hecke characters of an imaginary quadratic field $K/\Q$ in which all primes dividing $N$ are split. These cycles are defined by means of the cohomology of the motive $(\mathcal{E}^n\times E^n,\epsilon)$, where $\mathcal{E}^n$ is a smooth compactification of the $n$-fold product of the universal elliptic curve $\mathcal{E}\rightarrow X_1(N)$, $E$ is an auxiliary elliptic curve with CM by $\mathcal{O}_K$ and $\epsilon$ is a suitable projector in the ring of rational correspondences on $\mathcal{E}^n\times E^n$. The work \cite{BDP} has been generalised by Brooks \cite{Brooks} to the case when $X_1(N)$ is replaced by a Shimura curve, and therefore $\mathcal{E}$ and $E$ are replaced by a universal false elliptic curve $\mathcal{A}$ and a false elliptic curve $A$ with CM by $\mathcal{O}_K$; in \cite{Brooks}, $N$ is assumed to be prime to $p$ as in \cite{BDP}, and one allows factorisations of $N$ into coprime integers $N=N^+\cdot N^-$ where all primes dividing $N^+$ are split in $K$, all primes dividing $N^-$ are inert in $K$, and $N^-$ is the square-free product of an even number of distinct prime factors.  

Along a different direction Masdeu in \cite{Masdeu} has defined generalized Heegner cycles for Mumford curves; in this setting we fix a modular form $f$ of weight $k$ and level $\Gamma_0(N)$, an imaginary quadratic field $K$ and a factorisation $N=p\cdot N^+\cdot N^-$ into coprime factors so that $p$ is a prime number, all primes dividing $N^+$ are split in $K$, all primes dividing $pN^-$ are inert in $K$, and $N^-$ is the square-free product of an odd number of distinct prime factors. The fiber at $p$ of Shimura curves attached to quaternion algebras $\mathcal{B}$ of discriminant $pN^-$ can be described by means of Mumford curves, \emph{i.e.} quotients $\Gamma\backslash\mathcal{H}_p$ of the $p$-adic upper half plane $\mathcal{H}_p=\C_p-\Q_p$ by an arithmetic group $\Gamma\subseteq B^\times$, where $B$ is the definite quaternion algebra of discriminant $N^-$ obtained from $\mathcal{B}$ by interchanging the invariants $\infty$ and $p$. In this case, generalized Heegner cycles are constructed by means of the cohomology of $(\mathcal{A}^\frac{n}{2}\times E^n,\epsilon_M)$, where $\mathcal{A}$ is the universal false elliptic curve as in \cite{Brooks}, $E$ is a fixed elliptic curve with CM by $\mathcal{O}_K$ as in \cite{BDP}, $n:=k-2$ and $\epsilon_M$ is a suitable projector on $\mathcal{A}^\frac{n}{2}\times E^n$. The main result of \cite{Masdeu} expresses the derivative of the anticyclotomic $p$-adic $L$-function attached to $f$ and $K$ at integers $j$ in the critical strip $1\leq j\leq k-1$ as linear combinations of the images of generalised Heegner cycles via the $p$-adic Abel-Jacobi map, evaluated at suitable differential forms. The main tools used in \cite{Masdeu} is the analysis by Iovita-Spiess \cite{IS} of the realisations (\'etale and de Rham) of the motive $(\mathcal{A}^{\frac{n}{2}},\epsilon_\mathcal{A})$. 

This paper continues the work initiated by \cite{Masdeu} in the context of Mumford curves, but instead of the motive $(\mathcal{A}^\frac{n}{2}\times E^n,\epsilon_M)$ considered in \cite{Masdeu} we study the motive $(\mathcal{A}^\frac{n}{2}\times A^\frac{n}{2},\epsilon)$ with $\mathcal{A}$ a universal false elliptic curve over a Shimura curve, $A$ a fixed false elliptic curve with CM by $\mathcal{O}_K$ and $\epsilon$ a projector in $\mathrm{Corr}_X(\mathcal{A}^\frac{n}{2}\times A^\frac{n}{2})$. Here the setting is the same as in \cite{Masdeu}: we fix a modular form $f$ of level $\Gamma_0(N)$, an imaginary quadratic field $K$ and a factorisation $N=p\cdot N^+\cdot N^-$ into coprime factors so that $p$ is a prime number, all primes dividing $N^+$ are split in $K$, all primes dividing $pN^-$ are inert in $K$, and $N^-$ is the square-free product of an odd number of distinct prime factors. It turns out that our motive seems to be more flexible than the motive considered in \cite{Masdeu}, and more natural because both the universal abelian variety and the fixed abelian variety with CM are false elliptic curves.
In this context we define generalised Heegner cycles, and we study them using techniques from \cite{IS} and \cite{BDP}. 

Our main results investigate the relation between generalised Heegner cycles and anticyclotomic $p$-adic $L$-functions, 
especially in the context of $p$-adic variation of modular forms. Fix an imaginary quadratic field $K$ and a modular form $f$ of weight $k_0$ and level $\Gamma_0(N)$, with $N=pN^+N^-$ as above, having finite slope at $p$. Let $f_\infty$ be the Coleman family of modular forms passing through $f$. 
In the ordinary case with $k_0=2$, Bertolini and Darmon introduced in \cite{BD} a $p$-adic $L$-function in the weight-variable $k$ interpolating special values at central critical points of anticyclotomic $L$-functions of newforms $f_k^\sharp$ whose $p$-stabilisations are the classical specialisations $f_k$ of $f_\infty$. In particular, this $p$-adic $L$-function is non-zero and vanishes at $k=2$. When $f$ corresponds to an elliptic curve, the main result of \cite{BD} expresses the first derivative along the weight variable $k$ of this anticyclotomic $p$-adic $L$-function valued at the point $k=2$ as linear combination of Heegner points. This results has been extended by Seveso \cite{Sev} in the finite slope case and $k_0\geq 2$ by expressing the first derivative along the weight variable $k$ of this anticyclotomic $p$-adic $L$-function at $k=k_0$ as linear combination of Heegner cycles. 

The $p$-adic $L$-functions studied in \cite{BD} and \cite{Sev} are restriction to the central critical line $s=k/2$ of a
$p$-adic $L$-function $\mathcal{L}_p(s,k)$ in two $p$-adic variables $s$ and $k$; in light of the results of \cite{Masdeu}, it is then natural to investigate the restriction $\mathcal{L}_p^{(j)}(k)=\mathcal{L}_p(k,k+j)$ of these $p$-adic $L$-functions along directions $s=k/2+j$, with $-k/2<j<k/2$ an integer in the critical strip. In the spirit of \cite{BD} and \cite{Sev}, for each $j$ such that 
$\mathcal{L}_p^{(j)}(k_0)=0$, we show that 
the derivative $\frac{d}{dk}\mathcal{L}_p^{(j)}(k)$ at $k=k_0$ can be expressed 
as linear combinations of our generalised Heegner cycles. 

We now state our main result in a more precise form. 
Let $f\in S_{k_0}(\Gamma_0(N))$ be a newform of weight $k_0$ and level $\Gamma_0(N)$, $K$ an imaginary quadratic field, $N=p\cdot N^+\cdot N^-$ a factorisation of $N$ into coprime integers such that $p$ is a prime, all prime factors dividing $N^+$ (respectively, $pN^-$) are split (respectively, inert) in $K$, and $N^-$ is a square-free product of an odd number of primes. 
Let $\mathcal{B}/\Q$ be the indefinite quaternion algebra of discriminant $pN^-$, and $\Gamma\subseteq B^\times$ the arithmetic subgroup corresponding to the choice of an Eichler order of level $N^+$ in the definite quaternion algebra $B/\Q$ of discriminant $N^-$. Let $X$ be the Shimura curve of level $\Gamma$. After choosing an auxiliary prime integer $M\ge 5$ prime to $N$ and a $\Gamma_1(M)$-level structure $\Gamma_M\subseteq\Gamma$, consider the Shimura curve $X_M\rightarrow X$ of level $\Gamma_M$ and the universal false elliptic curve $\mathcal{A}\rightarrow X_M$. Fix a false elliptic curve $A_0$ with CM by $\mathcal{O}_K$. For any isogeny $\varphi:A_0\rightarrow A$ we construct a generalised Heegner cycle $\Delta_\varphi$ in the Chow group $\CH^{n_0+1}(\mathcal{D})$ of the Chow motive $\mathcal{D}:=(\mathcal{A}^\frac{n_0}{2}\times A_0^\frac{n_0}{2},\epsilon)$, 
where $n_0=k_0-2$.  
For any positive even integer $k$, let $M_{k}(\Gamma)$ be the $\C_p$-vector space of rigid analytic quaternionic modular forms of weight $k$ and level $\Gamma$; elements of $M_{k}(\Gamma)$ are functions from $\mathcal{H}_p=\C_p-\Q_p$ to $\C_p$ which transform under the action of $\Gamma$ by the automorphic factor of weight $k$. In particular, the Jacquet-Langlands correspondence allows us to see $f$ as an element of $M_{k_0}(\Gamma)$.  
Let $V_{n_0}$ denote the dual of the $\C_p$-vector space $\mathcal{P}_{n_0}$ of polynomials in one variable of degree at most $n_0$. We construct a $p$-adic Abel-Jacobi map 
\[\AJ_p:\CH^{n_0+1}(\mathcal{D})\longrightarrow (M_{k_0}(\Gamma)\otimes V_{n_0})^\vee\] where the target denotes 
the $\C_p$-linear dual of $M_{k_0}(\Gamma)\otimes V_{n_0}$.
On the other hand, denote \[\mathcal{W}=\Hom_\mathrm{cont}(\Z_p^\times,\Q_p^\times)\] the weight space, and view $\Z\subseteq\mathcal{W}$ by the map $k\mapsto[x\mapsto x^{k-2}]$.
For any integer $j$ with $-k_0/2<j<k_0/2$, we construct a function 
$k\mapsto \mathcal{L}_p^{(j)}(k)$ defined in a sufficiently small connected neighborhood of $U$ of $k_0\in\mathcal{W}$.
When $j\equiv0\pmod{p+1}$, $\mathcal{L}_p^{(j)}(k)$ coincides with  the restriction to the line $s=k/2+j$ of the two variable $p$-adic $L$-function of \cite{BD}, \cite{Sev}; thus in particular the value of this function at $j=0$ correspond to the one variable $p$-adic $L$-function studied in \cite{BD}, \cite{Sev}.
The notation used below to denote this function is more involved, 
but in the introduction we prefer to keep the notational complexity at minimum
stating our main result, Theorem \ref{theoremA}, in the case when the class number of $K$ is equal to $1$:  
see Definitions \ref{def of p-adic L-function 1} and \ref{def of p-adic L-function 2}
for the complete notation, keeping in mind that if the class number 
of $K$ is $1$ then the two functions in 
Definitions  \ref{def of p-adic L-function 1} and \ref{def of p-adic L-function 2}
are the same, and $\chi$ in \emph{loc. cit.} is trivial.
Thus, our main result, for which as remarked above we assume that the class number of $K$ is one to simplify the statement, is the following:

\begin{theorem}\label{theoremA} For integers $-k_0/2<j<k_0/2$
with $j\equiv0\pmod{p+1}$
we have \[\mathcal{L}_p^{(j)}(k_0)=0\] and 
there exists an isogeny $\varphi:A_0\rightarrow A$ and are elements $v_\varphi^{(j)}$ and $\bar{v}_\varphi^{(j)}$ in $V_{n_0}$ such that we have 
\[
\left(\frac{d}{dk}\mathcal{L}_p^{(j)}(k)\right)_{|k=k_0}=c_\varphi\left(\AJ_p(\Delta_{\varphi})(f\otimes v_\varphi^{(j)})+
\omega_p\AJ_p(\Delta_{\bar{\varphi}})(f\otimes \bar{v}_\varphi^{(j)})\right).\]
\end{theorem}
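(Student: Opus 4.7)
The plan is to adapt the strategy of Bertolini--Darmon \cite{BD} and Seveso \cite{Sev}, who handle the central critical case $j=0$, to the range of integers $-k_0/2 < j < k_0/2$ with $j \equiv 0 \pmod{p+1}$, using as geometric input the generalised Heegner cycles $\Delta_\varphi \in \CH^{n_0+1}(\mathcal{D})$ constructed earlier in the paper, together with the Iovita--Spiess \cite{IS} description of the de Rham and rigid cohomology of the motive $\mathcal{D} = (\mathcal{A}^{n_0/2} \times A_0^{n_0/2}, \epsilon)$ on the Mumford curve $\Gamma_M \backslash \mathcal{H}_p$. I would carry this out in three steps.

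\emph{Step 1: Exceptional vanishing at $k = k_0$.} Rewrite $\mathcal{L}_p^{(j)}(k)$ as the evaluation at a locally analytic weight-variable character of a $p$-adic distribution on $\Z_p^\times$ obtained by pairing the Coleman family $f_\infty$ with the anticyclotomic measure attached to an optimal embedding of $\cO_K$ into the Eichler order defining $\Gamma$. Since $p$ is inert in $K$ and the condition $j \equiv 0 \pmod{p+1}$ forces the relevant character to factor through the anticyclotomic quotient $\cO_{K,p}^\times / \Z_p^\times$, the local root number at $p$ of the classical specialisation equals $-1$. The vanishing $\mathcal{L}_p^{(j)}(k_0) = 0$ then manifests itself as the eigenvalue $-1$ of the Atkin--Lehner involution $W_p$ on the piece of the measure supporting the integral: the measure changes sign under the action induced by $W_p$, forcing the integral to vanish identically.

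\emph{Step 2: The derivative as a rigid-analytic integral.} Because the integrand depends on the weight variable only through an explicit character of the shape $t \mapsto \langle t \rangle^{k-k_0}$, and because $\mathcal{L}_p^{(j)}(k_0) = 0$, the derivative at $k = k_0$ equals the integral of $\log_p$ against the same measure. Following the Iovita--Spiess method as used in \cite{Sev} and \cite{Masdeu}, I would translate this into a rigid-analytic integral over a fundamental domain for $\Gamma$ acting on $\mathcal{H}_p$ of a specific differential form built from the harmonic cocycle on the Bruhat--Tits tree $\T$ that represents $f$, paired with a locally analytic distribution on $\PP^1(\Q_p)$ depending on $j$ and on the CM data for $K$.

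\emph{Step 3: Comparison with the Abel--Jacobi map.} By the Iovita--Spiess analysis, the map $\AJ_p$ on $\CH^{n_0+1}(\mathcal{D})$ admits an explicit description in terms of Coleman integrals on $\mathcal{H}_p$ against $1$-cycles determined by CM points and isogenies. For the cycles $\Delta_\varphi$ and $\Delta_{\bar\varphi}$ attached to an isogeny $\varphi \colon A_0 \to A$ and to its conjugate under $\Gal(K/\Q)$, the resulting integral has exactly the same shape as the one obtained in Step 2, for explicit polynomial vectors $v_\varphi^{(j)}, \bar v_\varphi^{(j)} \in V_{n_0}$ encoding the CM action on the polynomials that parametrise the cycle. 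The two terms in the statement correspond to the two fixed points in $\mathcal{H}_p$ of the torus $K^\times \hookrightarrow B^\times$, which are swapped by $W_p$ since $p$ is inert; this accounts for the factor $\omega_p$ in front of the second term. Matching the two integrals then identifies the derivative with the claimed combination, up to the global scalar $c_\varphi$ absorbing the degree of $\varphi$ and the CM period.

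\emph{Main obstacle.} The delicate part is Step 3: pinpointing the explicit polynomial vectors $v_\varphi^{(j)}, \bar v_\varphi^{(j)}$ realising the matching of the two integrals. This requires showing that the weight-variable derivative of the Coleman family measure corresponds, under the Iovita--Spiess comparison between $\Gamma$-cohomology of distributions and the de Rham cohomology of $\mathcal{D}$, precisely to the de Rham class of the Heegner cycle; and keeping scrupulous track of the normalisations of the projector $\epsilon$, the auxiliary level $M$, and the CM period. A secondary technical point is verifying that the congruence $j \equiv 0 \pmod{p+1}$ is both necessary for $\mathcal{L}_p^{(j)}(k) = \mathcal{L}_p(k, k+j)$ to agree with the restriction of the two-variable $p$-adic $L$-function and sufficient for the derivative to live in $\C_p$ rather than a proper torsor thereof.
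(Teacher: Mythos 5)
Your overall architecture (vanishing at $k_0$, derivative expressed through Coleman-type integrals at CM points, comparison with $\AJ_p(\Delta_\varphi)$) matches the paper's, but Step 1 rests on a mechanism that is wrong, and the bridge you defer to in Steps 2--3 is precisely the content of the proof. On Step 1: the paper does not deduce $\mathcal{L}_p^{(j)}(k_0)=0$ from a local root number or an Atkin--Lehner sign. If the relevant measure really changed sign under $W_p$ so as to make ``the integral vanish identically'', then $\mathcal{L}_p^{(j)}(k)$ would vanish for all $k$ near $k_0$ and its derivative would vanish too, contradicting the second half of the theorem; moreover the theorem holds for either value of $\omega_p\in\{\pm1\}$, so no eigenvalue $-1$ is available in general. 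The actual mechanism (Proposition \ref{prop7.5}) is that the congruence $j\equiv0\pmod{p+1}$ forces $\left\langle \frac{x-z_\Psi y}{x-\bar z_\Psi y}\right\rangle^{j}=\left(\frac{x-z_\Psi y}{x-\bar z_\Psi y}\right)^{j}$ on $L'_\Psi$, so that at $k=k_0$ the integrand collapses to the degree-$n_0$ polynomial $Q^{(j)}_{\Psi}(x,y)$ (up to the constant $A_\Psi^{m_0}$), and the measures $\mu_{L}$ annihilate all polynomials of degree $n_0$ by \cite{Sev} (Propositions 3.8 and 6.2, ultimately the harmonicity of the weight-$k_0$ specialisation of $\Phi$). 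This is why the vanishing occurs at $k=k_0$ only, and for every such $j$, central or not.

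On Steps 2--3: the intermediate object you never pin down is the semidefinite integral of Definition \ref{def of p-adic L-function 1}'s companion, namely $\int^{z}Q\,\omega_f$ in \eqref{semidefinite}, and its identification with $\langle F_{\omega_f}(z),Q^\vee\rangle_{V_{n_0}}$ via the uniqueness statement of \cite{Sev} (Lemma 5.6) combined with Lemma \ref{lemma7.3}. Together with Theorem \ref{Theorem1}, i.e.\ $\AJ_p(\Delta_\varphi)(f\otimes v)=\langle F_{\omega_f}(z_A),\varphi_*(v)\rangle_{z_A}$ (proved by the Iovita--Spiess residue computation in Lemmas \ref{lemma5.2}--\ref{lemma5.4}), this yields the explicit choice $v_\varphi^{(j)}=\varphi^\vee_*\bigl((Q^{(j)}_{\Psi_\varphi})^\vee\bigr)$ and the constant $c_\varphi=A_{\Psi_\varphi}^{m_0}/(2\deg\varphi)$. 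Your picture of a distribution on $\PP^1(\Q_p)$ integrated over a fundamental domain is the method the paper uses for the \emph{one-variable} $L$-function (Proposition \ref{prop6.1}, via Teitelbaum's Poisson formula), not for the weight-derivative; for the latter, the derivative is split by \cite{Sev} (Proposition 3.1) into the two semidefinite integrals at $z_\varphi$ and $\bar z_\varphi$, and the factor $\omega_p$ in front of the second term comes from $z_{\bar A}=w_p(\bar z_A)$ together with the action of $W_p$ on $F_{\omega_f}$ (Proposition \ref{prop6.5}) --- this last point is the one part of your Step 3 that is correct as stated.
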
 

In the theorem above, $c_\varphi\in\bar{\Q}_p^\times$  is an explicit constant which only depends on $\varphi$, $\omega_p\in\{\pm1\}$ is the eigenvalue of the Atkin-Lehner involution acting on $f$, 
and if $\varphi:A_0\rightarrow A$ is an isogeny, we 
denote by $\bar{\varphi}\colon A_0\rightarrow\bar{A}$ the isogeny obtained by $\varphi$ composing with the generator of $\Gal(K/\Q)$ (recall that $A$ is defined over $K$ by the theory of complex multiplication, under the assumption that $K$ has class number one). 
This result is a special case of Theorem \ref{TheoremB} below, which also considers twists by certain anticyclotomic characters of $K$, and holds for arbitrary class number of $K$.  

In studying our generalized Heegner cycles, we also obtain a second result similar in spirit to that of \cite{Masdeu}, expressing the first derivative of the anticyclotomic $p$-adic $L$-function attached to $f$ and $K$ at integers $j$ in the critical strip $1\leq j\leq k_0-1$ as linear combinations of our generalized Heegner cycles, valued at suitable differential forms; although the result is similar in spirit to that of \cite{Masdeu}, it has a different shape, due to the different motives used, and furthermore generalises that of \cite{Masdeu} to certain anticyclotomic characters. 
We state a simplified version (again for trivial characters and class number of $K$ equal to $1$) of this result, referring to 
Theorem \ref{TheoremA} and the comments following it for the notation.  

\begin{theorem} Let $L_p(f/K,s)$ be the anticyclotomic $p$-adic $L$-function attached to $f$ and $K$ and $-k_0/2< j< k_0/2$ an integer with $j\equiv0\pmod{p+1}$. 
For each such $j$ we have $L_p(f/K,k_0/2+j)=0$ and  there exists an isogeny $\varphi:A_0\rightarrow A$
such that 
\[L_p'\left(f/K,k/2+j\right)=
d_\varphi\cdot\left(\AJ_p(\Delta_{\varphi})(f\otimes v_{{\varphi}}^{(j)})-\omega_p\cdot
\AJ_p(\Delta_{\bar{{\varphi}}})(f\otimes \bar{v}_{{\varphi}}^{(j)})
\right)
.\]\end{theorem}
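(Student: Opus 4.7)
The plan is to follow Masdeu's strategy from \cite{Masdeu}, but with our motive $\mathcal{D} = (\mathcal{A}^{n_0/2} \times A_0^{n_0/2}, \epsilon)$ in place of the one considered there, extending the computation to all non-central critical integers $j$ with $j \equiv 0 \pmod{p+1}$. The starting point is an explicit integral representation of the one-variable $p$-adic $L$-function: via the Jacquet-Langlands correspondence, view $f$ as a quaternionic rigid analytic form in $M_{k_0}(\Gamma)$, which yields, after integration against $\Gamma$-orbits on the Bruhat-Tits tree of $\mathrm{GL}_2(\Q_p)$, a measure $\mu_f$ on a suitable $p$-adic torus $T$ attached to $K$; one then has
\[
L_p(f/K, s) \;=\; \int_T \chi_s \, d\mu_f,
\]
where $\chi_s$ is the weight-$s$ locally analytic character, in the spirit of \cite{BD} and \cite{Sev}.

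To establish the vanishing $L_p(f/K, k_0/2 + j) = 0$, observe that at $s = k_0/2 + j$ the character $\chi_s$ restricts to an explicit polynomial $P_j$ of degree $n_0$ on $T$. Since $p$ is inert in $K$ and $N^-$ is the square-free product of an odd number of primes, the sign of the functional equation of $L_p$ equals $-\omega_p$; the hypothesis $j \equiv 0 \pmod{p+1}$ ensures that $P_j$ is $\mu_{p+1}$-invariant, hence descends to a function on $T$ which is antisymmetric under the Atkin-Lehner involution $W_p$ acting on $\mu_f$ with eigenvalue $\omega_p$, so the integral vanishes. Differentiating along $s$ one then obtains
\[
L_p'(f/K, k_0/2 + j) \;=\; \int_T \bigl(\log_p \cdot P_j\bigr) \, d\mu_f,
\]
a Coleman-integral type expression pairing $f$ (viewed as a rigid differential form on the Mumford curve $\Gamma_M \backslash \mathcal{H}_p$) against a divisor supported at the CM points attached to the isogeny $\varphi$ and its conjugate $\bar{\varphi}$.

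The final step is to match this expression against the explicit formula for $\AJ_p(\Delta_\varphi)$ established earlier in the paper: following \cite{IS}, one expresses $\AJ_p(\Delta_\varphi)(f \otimes v)$ as a Coleman integral of $f$ against a section of the sheaf associated to $v \in V_{n_0}$ via the comparison between the de Rham realisation of $(\mathcal{A}^{n_0/2}, \epsilon_{\mathcal{A}})$ and the CM Hodge decomposition $H^1_{\mathrm{dR}}(A_0) \otimes \C_p \cong \C_p\alpha \oplus \C_p\bar{\alpha}$. The weight vectors $v_{\varphi}^{(j)}$ and $\bar{v}_{\varphi}^{(j)} \in V_{n_0}$ are then defined precisely so that the contributions of $\Delta_\varphi$ and $\Delta_{\bar{\varphi}}$ reconstruct the two halves of $\int_T (\log_p \cdot P_j)\, d\mu_f$ coming from the two $\Gamma$-orbits on the torus, and the relative sign $-\omega_p$ records the antisymmetric action of $W_p$. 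The constant $d_\varphi$ absorbs the periods of $A_0$, the degree of $\varphi$, and the usual normalisation factors.

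The main obstacle will be the precise matching between $P_j$ and the pair $(v_{\varphi}^{(j)}, \bar{v}_{\varphi}^{(j)})$: this requires tracking the moments of $\mu_f$ on $\cO_{K,p}^\times / \Z_p^\times$ through the Iovita-Spiess isomorphism and reconciling the combinatorial structure of the Coleman integral on the $\mathcal{A}$-side with the CM Hodge decomposition on the $A_0$-side of $\mathcal{D}$, being careful that the two eigenlines of the CM action on $H^1_{\mathrm{dR}}(A_0)$ pick up the two terms in the formula with the correct sign. Once this comparison is set up, the theorem follows from a $p$-adic Gross-Zagier-style calculation comparing the two resulting Coleman integrals.
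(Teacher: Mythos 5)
Your overall route is the one the paper actually takes (following \cite{Masdeu}): represent the partial $L$-function as $L_p(f/K,\Psi,\infty,s)=\int_G\langle\alpha\rangle^{s-k_0/2}d\mu_{f,\Psi,\infty}(\alpha)$ on the norm-one torus $G=K_{p,1}^\times$, show the value at $s=k_0/2+j$ vanishes, compute the derivative as $\int_G\log_f(\langle\alpha\rangle)\langle\alpha\rangle^{j}d\mu_{f,\Psi,\infty}(\alpha)$, convert it (via Teitelbaum's Poisson inversion and an interchange of integrals) into the Coleman integral $\int_{\bar z_\varphi}^{z_\varphi}f(z)Q^{(j)}_{\Psi_\varphi}(z)\,dz$ between the two CM points, and finally identify the two semidefinite integrals $\int^{z_\varphi}Q^{(j)}_{\Psi_\varphi}\omega_f$ and $\int^{\bar z_\varphi}Q^{(j)}_{\Psi_\varphi}\omega_f$ with $\deg(\varphi)^{-1}\AJ_p(\Delta_\varphi)(f\otimes v_\varphi^{(j)})$ and $\deg(\varphi)^{-1}\omega_p\AJ_p(\Delta_{\bar\varphi})(f\otimes\bar v_\varphi^{(j)})$ using the Coleman-primitive formula $\AJ_p(\Delta_\varphi)(f\otimes v)=\langle F_{\omega_f}(z_A),\varphi_*(v)\rangle_{z_A}$ (Theorem \ref{Theorem1}, Corollary \ref{coro5.6}, Propositions \ref{prop6.6} and \ref{prop6.5}). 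Your last two steps are stated loosely (the vectors $v_\varphi^{(j)}$ are not ``reverse-engineered''; they are $\varphi^\vee_*\bigl((Q^{(j)}_{\Psi_\varphi})^\vee\bigr)$ and the matching is Theorem \ref{theorem semidefinite}), but in substance they coincide with the paper's argument.

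The genuine gap is the vanishing $L_p(f/K,k_0/2+j)=0$. You derive it from a functional-equation/Atkin--Lehner sign argument: the sign is $-\omega_p$ and $P_j$ is ``antisymmetric under $W_p$''. This cannot work for $j\neq 0$: the functional equation relates $s$ to $k_0-s$, i.e.\ $j$ to $-j$, so it forces nothing away from the central point; moreover conjugation by $w_p$ induces the nontrivial automorphism of $K_p$, hence sends $\alpha^{j}$ to $\alpha^{-j}$ on $G$, not to $-\alpha^{j}$, so there is no antisymmetry to exploit. The correct mechanism, and the actual role of the hypothesis $j\equiv 0\pmod{p+1}$, is different: the congruence forces $\langle\alpha\rangle^{j}=\alpha^{j}$ (the Teichm\"uller contribution $\zeta^{j(p^2-p)}$ is trivial), so that the integrand $P_{\Psi}^{m_0}\cdot\alpha^{j}$ is the restriction to the torus of the honest degree-$n_0$ polynomial $Q^{(j)}_{\Psi}(x,y)=(x-z_\Psi y)^{m_0+j}(x-\bar z_\Psi y)^{m_0-j}$, and the boundary distribution attached to the $V_{n_0}$-valued harmonic cocycle of $f$ annihilates every polynomial of degree at most $n_0$ by harmonicity (\cite[Lemma 5.1]{Masdeu}, invoked in the proof of Proposition \ref{prop6.1}; see also Remark \ref{rem}, which stresses that without the congruence the integrand is only locally analytic and the vanishing argument breaks down). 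You need this polynomiality for the derivative computation as well, since it is what allows $\langle\alpha\rangle^{j}$ to be pulled into the rational function $\bigl(\tfrac{x-z_\Psi}{x-\bar z_\Psi}\bigr)^{j}$ before applying Poisson inversion; so the sign argument should be replaced by the polynomiality/harmonicity argument in both places.
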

Here $d_\varphi\in\bar{\Q}_p^\times$ is an explicit constant which only depends on $\varphi$. 
This result is a special case of Theorem \ref{TheoremA} below, which, as for Theorem \ref{theoremA}, 
also considers twists by certain anticyclotomic characters of $K$, and holds for arbitrary class number of $K$.  

\section{Shimura curves}
\label{section: Shimura curves}

In this section we collect come preliminaries on Shimura curves which will be needed in this paper. We fix an integer $N$ with a coprime factorization $N=pN^+N^-$ such that $p\nmid N^+N^-$ is a prime number, and $N^-$ is a square free product of an odd number of primes factors. 

\subsection{Shimura curves}
Let $\mathcal{B}$ be the indefinite quaternion algebra over $\Q$ of discriminant $pN^-$. Fix a maximal order $\mathcal{R}^\mathrm{max}$ in $\mathcal{B}$ and an Eichler order $\mathcal{R}$ of level $N^+$ contained in $\mathcal{R}^\mathrm{max}$. The Shimura curve $X=X_{N^+,pN^-}/\Q$ is the coarse moduli scheme representing the functor which takes a $\Q$-scheme $S$ to 
isomorphism classes of abelian surfaces with quaternonic multiplication by $\mathcal{R}^\mathrm{max}$ and level $N^+$-structure, \emph{i.e.} triples $(A,\iota,C)$ where
\begin{enumerate}
\item $A$ is an abelian surface over a $\Q$-scheme $S$;
\item $\iota\colon \mathcal{R}^\mathrm{max}\rightarrow\End_S(A)$ is an inclusion defining an $\mathcal{R}^\mathrm{max}$-module structure on $A/S$;
\item $C\subset A$ is a subsgroup scheme locally isomorphic to $\Z/N^+\Z$, stable and locally cyclic under the action of $\mathcal{R}$.
\end{enumerate}
The scheme $X$ is a smooth, projective and geometrically connected curve over $\Q$. A triple $(A,\iota,C)$  is called a \emph{false elliptic curve with level $N^+$-structure}, and the abelian surface $A$ is 
called a \emph{false elliptic curve}. An isogeny $\varphi:A\rightarrow A'$ of false elliptic curves is said to be a \emph{false isogeny} if it commutes with the action of $\mathcal{R}^\mathrm{max}$.  

The fiber at $p$ of $X$ is a Mumford curve, as we will review now.  
Let $\mathcal{H}_p$ denote the rigid analytic space over $\Q_p$ whose points over field extensions $L/\Q_p$ are given by $\mathcal{H}_p(L)=L-\Q_p$ 
(see for example \cite[Chapter 5]{DarmonBook} or \cite[Section 1]{DasgTeit}, where the rigid analytic structure of $\mathcal{H}_p$ is also carefully described).  
Let $B/\Q$ be the definite quaternion algebra over $\Q$ of discriminant $N^-$ and let $R$ be an Eichler $\Z[\frac{1}{p}]$-order of level $N^+$ in $B$. By fixing an isomorphism $\iota_p\colon B_p\rightarrow \M_2(\Q_p)$ the group $\Gamma$ of elements of reduced norm $1$ in $R$ can be identified with a discrete subgroup of $\SL_2(\Q_p)$. We let $\SL_2(\Q_p)$ act on $\mathcal{H}_p(L)$, for each field extension $L/\Q_p$, by fractional linear transformations $z\mapsto\frac{az+b}{cz+d}$ for $\smallmat abcd\in\SL_2(\Q_p)$ and $z\in\mathcal{H}_p(L)$. We may then form the quotient $X_{\Gamma,\Q_p}=\Gamma\backslash\HH_p$ 
and for any field extension $F/\Q_p$, its base change $X_{\Gamma,F}=X_{\Gamma,\Q_p}\otimes_{\Q_p}F$, which, when $F$ contains $\Q_{p^2}$, is a Mumford curve defined over $F$ (in general, it is a twist of a Mumford curve by a quadratic character).  
The Cerednik-Drinfeld theorem states the existence of an isomorphism
\begin{equation}\label{cer-drin}
X_{\Q_{p^2}}\simeq X_{\Gamma,\Q_{p^2}}
\end{equation}
of algebraic curves defined over $\Q_{p^2}$.
See \cite[Section 4]{JL}, 
\cite[Theorem 1.3]{BD-Heegner} or \cite[Chapitre III]{BC} for details. We put 
$X_\Gamma=X_{\Gamma,\hat{\Q}_{p}^\unr}$ to simplify the notation, 
where $\hat{\Q}_p^\unr$ is the completion of the maximal unramified extension $\Q_p^\unr$ 
of $\Q_p$. 

\subsection{An auxiliary fine moduli problem}\label{sec:2.2}
Fix $M\geq 3$ an integer relatively prime to $N$. Let $X_M$ be the fine moduli scheme representing abelian surfaces with quaternionic multiplication by $\mathcal{R}^\mathrm{max}$, level $N^+$-structure and full level $M$-structure over $\Q$-schemes, \emph{i.e.} 
quadruples $(A,\iota,C,\overline{\nu})$ where
\begin{enumerate}
\item $(A,\iota,C)$ is an abelian surface with quaternionic multiplication by $\mathcal{R}^\mathrm{max}$ and level $N^+$-structure over a $\Q$-scheme $S$;
\item $\overline{\nu}\colon (\mathcal{R}^\mathrm{max}/M\mathcal{R}^\mathrm{max})_S\rightarrow A[M]$ is a $\mathcal{R}^\mathrm{max}$-equivariant isomorphism from the constant group scheme $(\mathcal{R}^\mathrm{max}/M\mathcal{R}^\mathrm{max})$ to the group scheme of $M$-division points of $A$.
\end{enumerate}
Quadruplets $(A,\iota,C,\overline{\nu})$ are called \emph{false elliptic curves with level $(N^+,\nu)$-structure}. 
The scheme $X_M$ is a smooth projective curve over $\Q$ which is not geometrically connected. The morphism $X_M\rightarrow X$ given by forgetting the level $M$-structure is a Galois covering with Galois group isomorphic to $G_M/\{\pm 1\}$, where 
\[G_M:=(\mathcal{R}^\mathrm{max}/M\mathcal{R}^\mathrm{max})^\times.\] 
We denote $\mathcal{A}\rightarrow X_M$ the universal abelian surface. 

Over $\hat{\Q}_p^\unr$, the curve $X_M$ decomposes as a disjoint union of Mumford curves
\begin{equation}\label{split}
X_{M,\hat{\Q}_p^\mathrm{unr}}=X_M\otimes_{\Q}\hat{\Q}_p^\unr\simeq \coprod_{(\Z/M\Z)^\times}\Gamma_M\backslash \HH_p^\unr,\end{equation}
for a suitable congruence subgroup 
$\Gamma_M\subset\SL_2(\Q_p)$, where we write $\mathcal{H}_p^\unr=\mathcal{H}_p\otimes_{\Q_p}\hat{\Q}_{p}^\unr$ (this isomorphism can be realised over any extension of $\Q_{p^2}$ containing all $\varphi(M)$-roots of unity, where $\varphi(M)=\sharp(\Z/M\Z)^\times$). 
See \cite[Section 5]{IS} for more details. 

\subsection{Modular forms}\label{sec:modular forms} We introduce in this subsection
two definitions of modular forms on quaternion algebras. 
\begin{definition}\label{definition modular forms}
Let $F$ be a field of characteristic zero and $k\geq 2$ an even integer. A $F$-valued modular form of weight $k$ on $X$ is a global section of the sheaf $(\Omega_{X_F/F}^1)^{\otimes\frac{k}{2}}$. We denote the space of these modular forms by $M_{k}(X,F)$. 
\end{definition}

The Jacquet-Langlands correspondence implies the existence of an isomorphism of $K$-vector spaces 
\[M_{k}(X,F)\simeq S_{k}(\Gamma_0(N),F)^{pN^-\text{-new}}\] where the right hand side denotes the $F$-vector space of $F$-valued cusp forms of weight $k$ and level $\Gamma_0(N)$ which are new at the primes dividing $pN^-$. This isomorphism is compatible with the action of the Hecke operators and Atkin-Lehner involutions, defined on both sides. For details, see \cite[Theorem 1.2]{BD-Heegner}. 

\begin{definition} Let $F$ be a field of characteristic zero and $k\geq 2$ an even integer.  
A $p$-adic modular form of weight $k$ for $\Gamma$ defined over $F$ is a rigid analytic function $f$ on $\HH_p$ defined over $F$ satisfying the rule 
$$f(\gamma z)=(cz+d)^k f(z)\qquad\text{for all}\ \gamma=
\begin{pmatrix}
a & b\\
c & d
\end{pmatrix}\in\Gamma \text{ and }z\in\mathcal{H}_p(\C_p)=\C_p-\Q_p.$$
The space of these $p$-adic modular forms will be denoted by $M_k(\Gamma,F)$ 
and for $F=\hat{\Q}_p^\unr$ we set $M_k(\Gamma)=M_k(\Gamma,\hat{\Q}_p^\unr)$. 
\end{definition}
Using the Cerednik-Drinfeld isomorphism \eqref{cer-drin}, 
one easily shows that the map 
$f\mapsto f(z)dz^{\otimes\frac{k}{2}}$ 
establishes and isomorphism between $M_k(\Gamma,F)$ and $M_k(X,F)$ for all fields $F$ containing $\Q_{p^2}$. 
 
\section{The generalised Kuga-Sato motive}\label{section: motive}
Let $N=pN^+N^-$ be fixed as in \S \ref{section: Shimura curves}. 
Let $k\geq4$ be an even integer and put $n=k-2$ and $m=n/2$.
Fix a quadratic imaginary field $K$ satisfying the following assumption: all primes dividing $N^+$ (respectively, $pN^-$) are split (respectively, inert) in $K$.  

\subsection{Definition} 
We begin by recalling some generalities on Chow motives, following mainly \cite[\S5]{IS}. 
Let $F$ be a field of characteristic zero, and $S$ a smooth quasi-projective connected variety over $F$. We denote $\mathcal{M}(S)$ the category of effective relative Chow motives over $S$ with respect to graded correspondences (\cite[\S 1.3]{DM}, \cite[Sec. 1]{Scholl-motives}). We will only 
use motives of the form $(X,\epsilon)=(X,\epsilon,0)$ where $X$ is a smooth projective $S$-scheme and $\epsilon\in\mathrm{Corr}^0_S(X,X)$ is a projector (\emph{i.e.} $\epsilon\circ\epsilon=\epsilon$) in the ring of correspondences on $X$ of degree $0$ (see \cite[\S1.2, \S1.3]{Scholl-motives} for details). 
If $S=\Spec(F)$, write $\mathcal{M}(F)=\mathcal{M}\left(\Spec(F)\right)$. Denote $R_p:\mathcal{M}(S)\rightarrow D^b(S,\Q_p)$ the $p$-adic realisation functor to the bounded derived category $D^b(S,\Q_p)$ of $\Q_p$-sheaves over $S$ (\cite[\S1.8]{DM}), thus for $\mathcal{M}=(X,\epsilon)\in\mathcal{M}(S)$, $R_p(\mathcal{M})$ denotes the $p$-adic realization of $\mathcal{M}$ as a motive over $S$. We can also consider $\mathcal{M}$ as a Chow motive over $F$ by applying the canonical functor $\mathcal{M}(S)\rightarrow\mathcal{M}(F)$, and if $\mathcal{M}=(X,\epsilon)$ for an abelian scheme $\pi:X\rightarrow S$, the $p$-adic realization $H_p(\mathcal{M})$ of $\mathcal{M}$ as a motive over $F$ is given by 
\[H_p^r(\mathcal{M})=H^r(\bar{S},R_p(\mathcal{M}))=\epsilon_*\cdot \bigoplus_{i+j=r}H^i(\bar{S},R^j\pi_*\Q_p),\] 
where $\bar{S}=S\otimes_F\bar{F}$
(see \cite[Proposition 5.9]{Besser} for the argument). 

We denote 
\begin{equation}\label{cycle map}
\cl_{\mathcal{M}}^{(i)}:\CH^i(\mathcal{M})\longrightarrow H^{2i}_p(\mathcal{M})\end{equation}
 the cycle class map (\cite[(40)]{IS}), whose kernel 
is denoted $\CH(\mathcal{M})_0$ (this map will not be used until
Section \ref{section: Abel-Jacobi}, but we prefer to introduce it here to collect all notations concerning Chow motives; the same applies to  
\eqref{Gamma functor} and \eqref{cycle class} below). 

Let $F$ be an unramified field extension of $\Q_p$. 
For a semistable representation of $G_F=\Gal(\bar{F}/F)$, let $D_{\mathrm{st},F}$ denote 
the semistable Dieudonn\'e functor over $F$ (see \cite[\S 2]{IS}); 
so if $V$ is a semistable representation of $G_F$, then 
$D_{\mathrm{st},F}(V)$ is a filtered Frobenius monodromy module over $F$ (see \cite[\S 2]{IS}); the category of such objects is denoted $\mathrm{MF}_F^{\phi,N}$, and for an object $D$ in this category we denote $F^\bullet(D)$ its filtration. 
For an object $D$ in $\mathrm{MF}_F^{\phi,N}$, define 
\begin{equation}\label{Gamma functor}
\Gamma(D)=\Hom_{\mathrm{MF}_F^{\phi,N}}(F,D)=\Ext^0_{\mathrm{MF}_F^{\phi,N}}(F,D)=
F^0(D)\cap D^{\phi=\mathrm{id}, N=0}.\end{equation}
Here $\Hom_{\mathrm{MF}_F^{\phi,N}}(\cdot,\cdot)$ denotes 
homomorphisms in the category $\mathrm{MF}_F^{\phi,N}$,
$\phi$ is the Frobenius morphism, 
$\mathrm{id}$ is the identity morphism,  
and $N$ is the monodromy operator of the object $D$. 
In particular, if the $p$-adic realization $H_p(\mathcal{M})$ of $\mathcal{M}$ is semistable, then 
the cycle class map $\cl_\mathcal{M}^{(i)}$ takes the form (\cite[(47)]{IS})  
\begin{equation}\label{cycle class}
\cl_\mathcal{M}^{(i)}:\CH^i(\mathcal{M})\longrightarrow\Gamma\left(D_{\mathrm{st},F}(H^{2i}_p(\mathcal{M})(F))\right).\end{equation}

Let $A_0$ be a fixed abelian surface with quaternionic multiplication and full level-$M$ structure, defined over $H$ (the Hilbert class field of $K$) and with complex multiplication by $\cO_K$; the action of $\mathcal{O}_K$ is required to commute with the quaternionic action, and this implies that $A_0$ is isogenous to $E\times E$ for an elliptic curve $E$ with CM by $\mathcal{O}_K$.  Fix a field $F\supset H$ and consider the $(2n+1)$-dimensional variety $Y_m$ over $F$ given by
\[Y_m:=\mathcal{A}^m\times A_0^m.\]
Here $\mathcal{A}^m$ is the $m$-fold fiber product of $\mathcal{A}$ over $X_M$. 
The variety $Y_m$ is equipped with a proper morphism $\pi\colon Y_m\rightarrow X_M$ with $2n$-dimensional fibers: the fibers above points $x$ of $X_M$ are products of the form $A_x^m\times A^m_0$, where $A_x$ is the fiber of $\mathcal{A}\rightarrow X_M$ at $x$. 

Denote $\epsilon_\mathcal{A}$ the projector in \cite[Appendix 10.1]{IS}; this is an idempotent in the ring of correspondences $\mathrm{Corr}_{X_M}(\mathcal{A}^m,\mathcal{A}^m)$. The projector $\epsilon_\mathcal{A}$ defines then a projector $\epsilon_{A_0}$.  
One can then define the motive
\[\mathcal{D}_M:=(Y_m,\epsilon_M)\] defined over $F$, 
where $\epsilon_M=(\epsilon_\mathcal{A},\epsilon_{A_0})$. 
In the previous notation, $\mathcal{D}_M\in\mathcal{M}(X_M)$.  

We now descent $\mathcal{D}_M$ to a motive over the Shimura curve $X$. Observe that the group 
\[G_M:=(\mathcal{R}^\mathrm{max}/M\mathcal{R}^\mathrm{max})^\times\simeq \GL_2(\Z/M\Z)\] acts as $X$-automorphism on $X_M$ and $\mathcal{A}^m$. It follows that the element
$p_G:=\frac{1}{|G_M|}\sum_{g\in G_M}g$
can be seen as a projector in $\mathrm{Corr}_X(Y_m,Y_m)$, which acts trivially on $A_0^m$. Since it commutes with $\epsilon_M$ (viewed as projector in $\mathrm{Corr}_X(Y_m,Y_m)$), their product $\epsilon=p_G\cdot\epsilon_M$ is a projector, and we can define a new motive $\mathcal{D}$ over $X$, the \emph{generalised Kuga-Sato motive}, as
$$\mathcal{D}:=(Y_m,\epsilon).$$
In the previous notation, $\mathcal{D}\in\mathcal{M}(X)$. 
We also denote $\mathcal{M}_M=(\mathcal{A}^m,\epsilon_\mathcal{A})$ the motive in $\mathcal{M}(X_M)$ considered in \cite{IS}, and 
we write $\mathcal{M}_{A_0}=(A^m_0,\epsilon_{A_0})$, also in $\mathcal{M}(X_M)$; 
then $\mathcal{D}_M=\mathcal{M}_M\otimes\mathcal{M}_{A_0}$.  Moreover, if we write $\mathcal{M}=(\mathcal{A}^m,p_{G}\cdot\epsilon_\mathcal{A})$ then we have 
\begin{equation}\label{decomposition of the motive} 
\mathcal{D}=\mathcal{M}\otimes\mathcal{M}_{A_0}\end{equation}
viewing $\mathcal{M}_{A_0}$ as a motive over $X$ (recall that the tensor product on the category of Chow motives is induced by the fiber product \cite[page 203]{DM}). 
Finally, note that $H_p^{2n+1}(\mathcal{D})$ is equipped with a structure of $G_F=\Gal(\bar F/F)$-representation.

\subsection{The \'etale realization} 
We consider now the sheaf $\mathbb{L}_n$ over $X_M$ introduced in \cite[Section 5]{IS}, which is defined as follows. First, define $\mathbb{L}_2$ as the intersection of the kernels of the maps $b-\mathrm{Nr}(b):R^2\pi_*\Q_p\rightarrow R^2\pi_*\Q_p$, as $b$ varies in $\mathcal{B}$, where $\mathrm{Nr}$ denote the reduced norm map; 
next, for any integer $n>2$, consider the non-degenerate 
pairing 
$R^2\pi_*\Q_p\otimes  R^2\pi_*\Q_p\rightarrow\Q_p(-2)$ given by cup product and the Laplace operator 
$\Delta_m:\mathrm{Sym}^m(\mathbb{L}_2)\rightarrow\left(\mathrm{Sym}^{m-2}(\mathbb{L}_2)\right)(-2)$ associated with this pairing, and 
define $\mathbb{L}_n$ to be the kernel of $\Delta_m$. 

Let $x_{A_0}$ be the closed point of $X_M$ corresponding to the abelian surface $A_0$ and $\bar{x}_{A_0}=x_{A_0}\otimes_F \bar{\Q}$.

\begin{proposition} \label{prop3.1}
The $p$-adic realization $H_p(\mathcal{D})$ of $\mathcal{D}$ is different from zero in degree $2n+1$ only, and we have
$$H_p^{2n+1}(\mathcal{D})=H^1(\overline{X}_M,\mathbb{L}_n)^{G_M}\otimes(\mathbb{L}_n)_{\bar{x}_{A_0}}.$$
\end{proposition}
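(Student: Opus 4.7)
The plan is to compute $H_p^r(\mathcal{D})$ by decomposing it along the product structure of $Y_m=\mathcal{A}^m\times A_0^m$ using the motivic factorisation $\mathcal{D}=\mathcal{M}\otimes\mathcal{M}_{A_0}$ recorded in \eqref{decomposition of the motive}. First I would apply the formula quoted from the introduction of Section \ref{section: motive}, namely $H_p^r(\mathcal{D})=\epsilon_*\bigoplus_{i+j=r}H^i(\overline{X},R^j\pi_*\Q_p)$, where $\pi\colon Y_m\to X$ is the composition $Y_m\to X_M\to X$. Since $X_M\to X$ is finite étale with Galois group $G_M/\{\pm1\}$, the Leray spectral sequence for this map degenerates and $H^i(\overline{X},\cdot)$ can be replaced by $H^i(\overline{X}_M,\cdot)$; the projector $p_G$ acts as averaging over $G_M$, so it extracts the $G_M$-invariants. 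Thus everything reduces to analysing, on $X_M$, the $\epsilon_M=(\epsilon_\mathcal{A},\epsilon_{A_0})$-part of $R^j(\pi_1)_*\Q_p$ where $\pi_1\colon Y_m\to X_M$.

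Next I would use the relative Künneth formula $R^j(\pi_1)_*\Q_p=\bigoplus_{a+b=j}R^a(\pi^\mathcal{A})_*\Q_p\otimes H^b(A_0^m,\Q_p)$, where $\pi^\mathcal{A}\colon\mathcal{A}^m\to X_M$. Applying $\epsilon_\mathcal{A}$ on the first tensor factor: the key input from \cite[Section 5]{IS} (see in particular the construction of $\mathbb{L}_n$ recalled just before the proposition) tells us that $(\epsilon_\mathcal{A})_*R^a(\pi^\mathcal{A})_*\Q_p$ vanishes unless $a=2m=n$, in which case it equals $\mathbb{L}_n$. Applying the same result to the fixed false elliptic curve $A_0$, viewed as the special fibre of $\mathcal{A}\to X_M$ at $x_{A_0}$, shows that $(\epsilon_{A_0})_*H^b(A_0^m,\Q_p)$ vanishes unless $b=n$, in which case it is canonically identified with the stalk $(\mathbb{L}_n)_{\bar x_{A_0}}$. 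Consequently the $\epsilon_M$-part of $R^j(\pi_1)_*\Q_p$ is concentrated in the single degree $j=2n$, where it equals the sheaf $\mathbb{L}_n\otimes(\mathbb{L}_n)_{\bar x_{A_0}}$ (with $(\mathbb{L}_n)_{\bar x_{A_0}}$ a constant coefficient vector space).

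Feeding this back into the Leray formula and taking $G_M$-invariants gives
\[
H_p^r(\mathcal{D})=H^{r-2n}(\overline{X}_M,\mathbb{L}_n)^{G_M}\otimes(\mathbb{L}_n)_{\bar x_{A_0}}
\]
for $r\in\{2n,2n+1,2n+2\}$ and zero otherwise. It then remains to show that $H^0$ and $H^2$ of $\mathbb{L}_n$ on $\overline{X}_M$ vanish: $H^0(\overline{X}_M,\mathbb{L}_n)=0$ because $\mathbb{L}_n$ is an irreducible non-trivial local system on the connected curve $\overline{X}_M$ (for $n\ge 2$ the monodromy representation coming from $\Gamma_M$ has no invariants), and $H^2(\overline{X}_M,\mathbb{L}_n)=0$ by Poincar\'e duality together with the (quaternionic) self-duality of $\mathbb{L}_n$. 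This leaves only $r=2n+1$, yielding the stated formula.

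The main obstacle I anticipate is the bookkeeping of projectors and Galois descent: verifying cleanly that the Künneth decomposition is compatible with $\epsilon_M$, that passing from $X_M$ to $X$ amounts precisely to taking $G_M$-invariants inside $H^1$, and that the factor $(\mathbb{L}_n)_{\bar x_{A_0}}$ really pops out as a tensor factor rather than as a more intricate piece of a spectral sequence. The vanishing of $H^0$ and $H^2$ is essentially standard once $\mathbb{L}_n$ is identified, but citing it correctly in the Mumford-curve/Shimura-curve setting of \cite{IS} is where care is needed.
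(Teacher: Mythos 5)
Your proposal is correct and follows essentially the same route as the paper: both rest on the factorisation $\mathcal{D}=\mathcal{M}\otimes\mathcal{M}_{A_0}$ of \eqref{decomposition of the motive}, the identification of the realization of the $\mathcal{A}^m$-factor with $\mathbb{L}_n$ placed in degree $n$ (and its fibre at $x_{A_0}$ for the $A_0^m$-factor), and the K\"unneth formula, the only difference being that the paper cites \cite[Lemma 10.1]{IS} for the concentration of $H_p(\mathcal{M})$ in degree $n+1$ while you re-derive the vanishing of $H^0$ and $H^2$ by hand. One small correction to that last step: $\overline{X}_M$ is \emph{not} geometrically connected (as noted in \S\ref{sec:2.2}), so the no-invariants argument for $H^0(\overline{X}_M,\mathbb{L}_n)=0$ must be applied on each geometric component separately, where the monodromy of $\mathbb{L}_n$ is still large enough for the conclusion to hold.
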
 

\begin{proof} The $p$-adic realization $R_p(\mathcal{M}_M)$ of the motive $\mathcal{M}_M$ over $X_M$ is 
$\mathbb{L}_n[-n]$ (\cite[(71)]{IS}); by \cite[ Lemma 10.1]{IS} 
the $p$-adic realization $H_p(\mathcal{M})$ of $\mathcal{M}$ is concentrated 
in degree $n+1$ and we have \[H_p^{n+1}(\mathcal{M})\simeq H^1(\overline{X}_M,\mathbb{L}_n)^{G_M}.\]  On the other hand, the $p$-adic realization 
$R_p(\mathcal{M}_{A_0})$ of the 
motive $\mathcal{M}_{A_0}$ over $X_M$ is the fiber at $x_{A_0}$ of $R_p(\mathcal{M}_M)=\mathbb{L}_n[-n]$ (\cite[(71)]{IS}); therefore, $H_p(\mathcal{M}_{A_0})=H^*\left(\overline{X}_M,(\mathbb{L}_n[-n])_{x_{A_0}}\right)$. Since $H^i\left(\overline{X}_M,(\mathbb{L}_n)_{x_{A_0}}\right)=0$ for $i\neq 0$, we see that 
$H_p^n(\mathcal{M}_{A_0})=(\mathbb{L}_n)_{\bar{x}_{A_0}}$ and $H_p^i(\mathcal{M}_{A_0})=0$ for $i\neq n$.  
The Kunneth formula (\cite[\S 1.8]{DM}) implies the result.\end{proof}
\begin{remark}
Considered as a $G_{\hat{\Q}_p^\mathrm{unr}}$-representation $H_p(\mathcal{D})$ is semistable since the category of semistable representations is an abelian tensor category.
\end{remark}

\subsection{The de Rham realisation} \label{sec: de Rham realisation} 
Let $V_n:=\mathcal{P}_n^\vee$ be the dual of the vector space of polynomials of degree $\leq n$ with coefficients in 
$\Q_{p}$ equipped with the left $\GL_2$-action given by
\[(A\cdot R)(P(X))=R(P(X)\cdot A)\] for all 
$A=\smallmat abcd$, where the right action of $A$ on 
a polynomial 
$P(X)\in\mathcal{P}_n$ is via the formula 
$P(X)\cdot A=(cX+d)^n P\left(\frac{aX+b}{cX+d}\right)$. 
The $\Q_p$-vector space $V_n$ is equipped with a symmetric bilinear form 
\begin{equation}\label{pairing V}
\langle,\rangle_{V_n}:V_n\otimes V_n\longrightarrow \det{}^{\otimes{n}}\end{equation}
in $\mathrm{Rep}_{\Q_p}(\GL_2)$, the category of
$\Q_p$-representations of $\GL_2$, defined as follows. First, we define $\langle,\rangle_{V_2}$ 
for $n=2$. Let $\mathrm{ad}^0=\{U\in\M_2|\mathrm{trace}(U)=0\}$, where $\mathrm{trace}:\M_2(\Q_p)\rightarrow\Q_p$ is the trace map; $\mathrm{ad}^0$ is equipped with a right $\GL_2$-action by 
$U\cdot A=\overline{A}\cdot U\cdot A$ for $U\in\mathrm{ad}^0$ and $A\in\GL_2$, where 
for $A=\smallmat abcd$, we put $\overline{A}=\smallmat {d}{-b}{-c}{a}$. 
The map $\mathrm{ad}^0\rightarrow \mathcal{P}_2$ which takes $U$ to 
\begin{equation}\label{polynomial} 
P_U(X)=\mathrm{trace}\left(U\cdot\mat X{-X^2}{1}{-X}\right)\end{equation} is an isomorphism of 
right $\GL_2$-modules.  
For $P_{U_1},P_{U_2}\in\mathcal{P}_2$, 
we define a pairing on $\mathcal{P}_2$ by $\langle P_{U_1},P_{U_2}\rangle_{\mathcal{P}_2}=\langle U_1,U_2\rangle_{V_2}=-\mathrm{trace}(U_1\cdot \overline{U}_2)$. This defines the pairing on $V_2$ by duality. More generally, 
we define a pairing $\langle,\rangle_{V_n}$ 
on $\mathrm{Sym}^{n/2}(\mathrm{ad}^0)$ by the formula 
\[\langle u_1\cdots u_{n/2},v_1\cdots v_{n/2}\rangle_{V_n}=\frac{1}{(n/2)!}\sum_{\sigma\in\Sigma_{n/2}}\langle u_1,v_{\sigma(1)}\rangle_{V_2}\cdot \langle u_n,v_{\sigma(n/2)}\rangle_{V_2}.\] The 
map $\mathrm{Sym}^{n/2}(\mathrm{ad}^0)\rightarrow\mathcal{P}_n$ induced by $U\mapsto P_U(X)$ gives by duality a map $V_n\rightarrow \mathrm{Sym}^{n/2}(\mathrm{ad}^0)$, 
and we obtain a pairing on $V_n$, which we also denote $\langle,\rangle_{V_n}$, 
from that on $\mathrm{Sym}^{n/2}(\mathrm{ad}^0)$. 

We consider the $\GL_2\times\GL_2$-representation
$V_n\{m\}=V_n\odot\det^{\otimes{m}}$ (see \cite[page 345]{IS} for the notation) and let $\mathcal{V}_n=\mathcal{E}(V_n\{m\})$ denote the filtered $F$-isocrystal on $\hat{\mathcal{H}}_p^\unr$ associated with $V_n\{m\}$; 
here $\hat{\mathcal{H}}_p$ is the formal model of $\mathcal{H}_p$ over $\Z_p$, and $\hat{\mathcal{H}}_p^\unr$ its base change to $\hat{\Z}_p^\unr$, the valuation ring of $\hat{\Q}_p^\unr$. 
See \cite[page 346]{IS} and \cite[page 1024]{Masdeu} for more details on this definition. 
Define the sheaf of $\mathcal{O}_{X_\Gamma}$-modules $\mathcal{V}_{n,n}=\mathcal{V}_n\otimes(\mathcal{V}_n)_{z_{A_0}}$ where $z_{A_0}$ is a point in $\HH_p(\Q_{p^2})$ such that $\Gamma z_{A_0}$ corresponds to the abelian surface $A_0$. Then  
\begin{equation}\label{iso1}
H^1_\mathrm{dR}(X_\Gamma,\mathcal{V}_{n,n})=
H^1_\mathrm{dR}(X_\Gamma,\mathcal{V}_{n})\otimes _{\hat{\Q}_p^\unr}(\mathcal{V}_n)_{z_{A_0}}.\end{equation}

The vector space $H^1_\mathrm{dR}(X_\Gamma,\mathcal{V}_{n,n})$ has a stucture of filtered Frobenius monodromy module in $\mathrm{MF}_{\hat{\Q}_p^\unr}^{\phi,N}$. 

\begin{proposition}\label{prop3.2}
$D_{\mathrm{st},\hat{\Q}_p^\unr}\left(H_p^{2n+1}(\mathcal{D})\right)\simeq
H^1_\mathrm{dR}(X_\Gamma,\mathcal{V}_{n,n})$ as filtered Frobenius monodromy modules in 
$\mathrm{MF}_{\hat{\Q}_p^\unr}^{\phi,N}$.  
\end{proposition}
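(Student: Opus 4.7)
The plan is to apply the functor $D_{\mathrm{st},\hat{\Q}_p^{\unr}}$ to the tensor decomposition of Proposition \ref{prop3.1} and match the two resulting factors with the corresponding factors of the de Rham side in \eqref{iso1}. Since $H_p(\mathcal{D})$ is semistable (by the remark after Proposition \ref{prop3.1}) and the subcategory of semistable $G_{\hat{\Q}_p^{\unr}}$-representations is closed under tensor products, the Dieudonn\'e functor $D_{\mathrm{st},\hat{\Q}_p^{\unr}}$ is a tensor functor on this subcategory. Thus
\[
D_{\mathrm{st},\hat{\Q}_p^{\unr}}(H_p^{2n+1}(\mathcal{D}))
\simeq D_{\mathrm{st},\hat{\Q}_p^{\unr}}\bigl(H^1(\overline{X}_M,\mathbb{L}_n)^{G_M}\bigr)\otimes D_{\mathrm{st},\hat{\Q}_p^{\unr}}\bigl((\mathbb{L}_n)_{\bar{x}_{A_0}}\bigr),
\]
compatibly with the filtration, the Frobenius $\phi$, and the monodromy $N$.

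First I would identify the first tensor factor using the work of Iovita-Spiess. Indeed, the main comparison theorem of \cite{IS} for the motive $\mathcal{M}_M=(\mathcal{A}^m,\epsilon_\mathcal{A})$, combined with taking $G_M$-invariants to descend from $X_M$ to $X$ (which commutes with $D_{\mathrm{st}}$ because $|G_M|$ is invertible in $\Q_p$ once $p\nmid M$), yields a canonical isomorphism
\[
D_{\mathrm{st},\hat{\Q}_p^{\unr}}\bigl(H^1(\overline{X}_M,\mathbb{L}_n)^{G_M}\bigr)\simeq H^1_\mathrm{dR}(X_\Gamma,\mathcal{V}_n)
\]
in $\mathrm{MF}_{\hat{\Q}_p^{\unr}}^{\phi,N}$; this is precisely the content of the analysis of the \'etale and de Rham realizations of the motive $\mathcal{M}$ carried out in \cite[Sections 5 and 10]{IS}.

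For the second factor I would argue that the stalk $(\mathbb{L}_n)_{\bar{x}_{A_0}}$ is the $p$-adic realization of the motive $\mathcal{M}_{A_0}$, which is cut out by the projector $\epsilon_{A_0}$ from the cohomology of the abelian variety $A_0^m$. Since $p$ is inert in $K$ the point $x_{A_0}$ extends to a point of the integral model where $A_0$ has good (supersingular) reduction, so $(\mathbb{L}_n)_{\bar{x}_{A_0}}$ is a crystalline $G_F$-representation with $N=0$. Applying the crystalline comparison theorem for abelian varieties with good reduction (equivalently, specialising at $z_{A_0}$ the fibrewise statement of \cite[Section 5]{IS}) yields a canonical isomorphism $D_{\mathrm{st},\hat{\Q}_p^{\unr}}((\mathbb{L}_n)_{\bar{x}_{A_0}})\simeq(\mathcal{V}_n)_{z_{A_0}}$ compatible with $\phi$ and the filtration, where $z_{A_0}\in\mathcal{H}_p(\Q_{p^2})$ is the point supplied by the Cerednik-Drinfeld isomorphism \eqref{cer-drin} corresponding to $x_{A_0}$. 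Tensoring the two identifications and invoking \eqref{iso1} gives the desired isomorphism.

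The main obstacle is bookkeeping at the level of additional structures. One must check that the $G_M$-action on $H^1(\overline{X}_M,\mathbb{L}_n)$ matches the descent already built into the definition of $X_\Gamma$ (on the de Rham side this is transparent, but on the \'etale side it requires the compatibility of the $G_M$-equivariant Cerednik-Drinfeld isomorphism with the auxiliary level structure), and that the monodromy operator on the whole tensor product really vanishes on the second factor so that $N$ on the target is just $N\otimes\mathrm{id}$ coming from the first factor. Once these compatibilities are collected from \cite{IS}, the proposition follows formally.
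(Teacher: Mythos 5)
Your argument is correct and takes essentially the same route as the paper: apply $D_{\mathrm{st},\hat{\Q}_p^\unr}$ to the tensor decomposition of Proposition \ref{prop3.1}, identify the first factor with $H^1_\mathrm{dR}(X_\Gamma,\mathcal{V}_n)$ via \cite[Theorem 5.9]{IS}, the second with $(\mathcal{V}_n)_{z_{A_0}}$ via the fibrewise statement \cite[Remark 5.14]{IS}, and conclude using \eqref{iso1} and the compatibility of $D_{\mathrm{st}}$ with tensor products. The only (harmless) imprecision is your gloss that $A_0$ has ``good (supersingular) reduction'' — in the Cerednik--Drinfeld setting the correct input is exactly the crystallinity of $(\mathbb{L}_n)_{\bar{x}_{A_0}}$ with trivial monodromy recorded in \cite[Remark 5.14]{IS}, which you also invoke.
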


\begin{proof} By \cite[Theorem 5.9]{IS} we have 
\[D_{\mathrm{st},\hat{\Q}_p^\unr}\left(H_p^{n+1}(\mathcal{\mathcal{M}})\right)\simeq D_{\mathrm{st},\hat{\Q}_p^\unr}\left(H^1(\overline{X}_M,\mathbb{L}_n)^{G_M}\right)\simeq
H^1_\mathrm{dR}(X_{\Gamma},\mathcal{V}_n)\]
and, by \cite[Remark 5.14]{IS} we have $D_{\mathrm{st},\hat{\Q}_p^\unr}\left((\mathbb{L}_n)_{\bar{x}_{A_0}}\right)\simeq (\mathcal{V}_n)_{z_{A_0}}$. The result follows from Proposition \ref{prop3.1}, equation 
\eqref{iso1} and the compatibility of the functor $D_\mathrm{st}$ with tensor products (see \cite[pg. 145]{BrCo}). 
\end{proof}

We now describe of 
$D_{\mathrm{st},\hat{\Q}_p^\unr}\left(H_p^{2n+1}(\mathcal{D})\right)$ as 
filtered Frobenius monodromy module.

We begin with the filtration. 
For $i=0,\dots,n$ and $z\in \mathcal{H}_p(\hat{\Q}_p^\unr)$, define $\partial^i\in (\mathcal{V}_n)_{z}\simeq V_n$ 
by \[\partial^i(P(X))=\left(\frac{d^i}{dX^i}P(X)\right)_{X=z}\]
for $P(X)\in\mathcal{P}_n$. Let $\Q_p\cdot\partial^i$ be the 
$\Q_p$-subspace of $V_n$ 
generated by $\partial^i$. 
The $i$-th step of the filtration of $V_n$ is given by 
\[F^i(V_n)=
\begin{cases}
\text{$V_n$ if $i\leq 0$} \\
\text{$\sum_{j=0}^{n-i}\Q_p\cdot \partial^j$ for $0\leq i\leq n$}\\
\text{$0$ if $i\geq n+1$}. 
\end{cases}
\] 
The $i$-th step of 
the filtration of $H^1_\mathrm{dR}(X_\Gamma,\mathcal{V}_n)$ is 
\begin{equation}\label{de Rham filtration} 
F^i\left(H^1_\mathrm{dR}(X_\Gamma,\mathcal{V}_n)\right)=
\begin{cases} \text{$H^1_\mathrm{dR}(X_\Gamma,\mathcal{V}_n)$ if $i\leq 0$}\\
\text{$M_k(\Gamma)$ if $1\leq i\leq n+1$}\\
\text{$0$ is $i\geq n+2$}
\end{cases}
\end{equation} See \cite[Proposition 6.1]{IS} for proofs. In particular, the isomorphism $M_k(\Gamma)\rightarrow F^{n+1}(H^1_\mathrm{dR}(X_\Gamma,\mathcal{V}_n))$ is given by
\begin{equation}\label{omega_f}
f\longmapsto\omega_f:=f(z)\partial^0\otimes dz.
\end{equation}
From \eqref{de Rham filtration} and Proposition \ref{prop3.2} we see that 
the $(n+1)$-step of the filtration of 
$D_{\mathrm{st},\hat{\Q}_{p}^\unr}\left(H_p^{2n+1}(\mathcal{D})\right)$ is 
\begin{equation}\label{filtration}
F^{n+1}\left(D_{\mathrm{st},\hat{\Q}_{p}^\unr}\left(H_p^{2n+1}(\mathcal{D})\right)\right)
=M_k(\Gamma)\otimes (\mathcal{V}_n)_{z_{A_0}} .\end{equation}

We also need an explicit description of the monodromy operator 
on $D_{\mathrm{st},\hat{\Q}_{p}^\unr}\left(H_p^{2n+1}(\mathcal{D})\right)$. 
We first describe the monodromy operator on $H^1_\mathrm{dR}(X_\Gamma,\mathcal{V}_n)$.
Let $\mathcal{T}$ 
denote the Bruhat-Tits tree of $\PGL_2(\Q_p)$, and denote $\overrightarrow{\mathcal{E}}$ and $\mathcal{V}$ the set of oriented edges and vertices of $\mathcal{T}$, respectively. If $e=(v_1,v_2)\in\overrightarrow{\mathcal{E}}$, we denote by $\overline{e}$ the oriented edge $(v_2,v_1)$. Let $C^0\left((V_n)_{\hat{\Q}_p^\unr}\right)$ be the set of maps $\mathcal{V}\rightarrow (V_n)_{\hat{\Q}_p^\unr}$ 
and 
$C^1\left((V_n)_{\hat{\Q}_p^\unr}\right)$ be the set of maps 
$\overrightarrow{\mathcal{E}}\rightarrow (V_n)_{\hat{\Q}_p^\unr}$ such that $f(\overline{e})=-f(e)$ for all $e\in\overrightarrow{\mathcal{E}}$, 
where $(V_n)_{\hat{\Q}_p^\unr}=V_n\otimes_{\Q_p}\hat{\Q}_p^\unr$. 
The group $\Gamma$ acts on $f\in C^i\left((V_n)_{\hat{\Q}_p^\mathrm{unr}}\right)$ by $\gamma(f)=\gamma\circ f\circ \gamma^{-1}$. Let 
\[\epsilon\colon C^1\left((V_n)_{\hat{\Q}_p^\mathrm{unr}}\right)^\Gamma\longrightarrow H^1\left(\Gamma,(V_n)_{\hat{\mathbb{Q}}_p^\mathrm{unr}}\right)\] be the connecting homomorphism arising from the short exact sequence
$$0\longrightarrow (V_n)_{\hat{\Q}_p^\mathrm{unr}}\longrightarrow C^0\left((V_n)_{\hat{\Q}_p^\mathrm{unr}}\right)\overset{\delta}{\longrightarrow} C^1\left((V_n)_{\hat{\Q}_p^\mathrm{unr}})\right)\longrightarrow 0,$$
where $\delta$ is the homomorphism defined by $\delta(f)(e)=f(v_1)-f(v_2)$ for $e=(v_1,v_2)$.
The map $\epsilon$ induces the following isomorphism that we also denote by $\epsilon$
$$\epsilon\colon C^1\left((V_n)_{\hat{\Q}_p^\mathrm{unr}}\right)^\Gamma \big/ C^0\left((V_n)_{\hat{\Q}_p^\mathrm{unr}}\right)^\Gamma\longrightarrow H^1\left(\Gamma,(V_n)_{\hat{\Q}_p^\mathrm{unr}}\right).$$
Let $A_e\subset\HH_p$ be the oriented annulus in $\HH_p$ corresponding to $e$ and $U_v\subset\HH_p$ be the affinoid corresponding to $v\in\mathcal{V}$, which are obtained as inverse images of the reduction map (see \cite[page 342]{IS}). 
Recall that $H^1_\mathrm{dR}(X_\Gamma,\mathcal{V}_n)$ can be identified with the $\hat{\Q}_p^\unr$-vector space of $V_n$-valued, $\Gamma$-invariant differential forms of the second kind on $\HH_p$ modulo exact forms (\cite[page 348]{IS}).  
Let $\omega$ be a $V_n$-valued $\Gamma$-invariant  differential of the second kind on $\HH_p$. We define $I(\omega)$ to be the map which assigns to an oriented edge $e\in\overrightarrow{\mathcal{E}}$ the value $I(\omega)(e)=\mathrm{Res}_e(\omega)$, where $\mathrm{Res}_e$ denotes the annular residue along $A_e$. 
If $\omega$ is exact, $I(\omega)=0$. Thus $I$ gives a well-defined map
\begin{equation}\label{I}
I\colon H^1_\mathrm{dR}\left(X_\Gamma,\mathcal{V}_n\right)\longrightarrow C^1\left((V_n)_{\hat{\Q}_p^\mathrm{unr}}\right)^\Gamma.\end{equation}
The $\hat{\Q}_p^\unr$-vector space $\bigoplus_{v\in\mathcal{V}}H^0_\mathrm{dR}(U_v,\mathcal{V}_n)$ can be identified with $C^0\left((V_n)_{\hat{\Q}_p^\mathrm{unr}}\right)$, and the subspace of $\bigoplus_{e\in\overrightarrow{\mathcal{E}}}H^0_\mathrm{dR}(A_e,\mathcal{V}_n)$ consisting of elements $\{f_e\}_{e\in\overrightarrow{\mathcal{E}}}$ such that $f_{\overline{e}}=-f_e$ can be identified with $C^1\left((V_n)_{\hat{\Q}_p^\mathrm{unr}}\right)$.
Since the set $\{U_v\}_{v\in\mathcal{V}}$ is an admissible covering of $\HH_p$, the Mayer-Vietoris sequence yields an embedding
$$C^1\left((V_n)_{\hat{\Q}_p^\mathrm{unr}}\right)^\Gamma\big/C^0\left((V_n)_{\hat{\Q}_p^\mathrm{unr}}\right)^\Gamma\longmono H^1_\mathrm{dR}(X_\Gamma,\mathcal{V}_n).$$
Precomposing with $\epsilon$, we obtain an embedding
\begin{equation}\label{iota}
\iota\colon H^1\left(\Gamma,(V_n)_{\hat{\Q}_p^\mathrm{unr}}\right)\longmono H^1_\mathrm{dR}(X_\Gamma,\mathcal{V}_n)\end{equation}
This map admits a natural left inverse
\begin{equation}\label{P}
P\colon H^1_\mathrm{dR}(X_\Gamma,\mathcal{V}_n)\longrightarrow H^1\left(\Gamma,(V_n)_{\hat{\Q}_p^\mathrm{unr}}\right),\end{equation} which takes  
$\omega$ to the class of the cocycle $\gamma\mapsto\gamma(F_\omega)-F_\omega$. Here $F_\omega$ is a primitive of $\omega$ in the sense of Coleman, i.e. $dF_\omega=\omega$ (see \cite[Lemma 4.4]{Col}).
 
Define now the monodromy operator $N_n$ on $H^1_\mathrm{dR}(X_\Gamma,\mathcal{V}_n)$ as the composite $\iota\circ(-\epsilon)\circ I$.
On the other hand, the monodromy operator $N_{(\mathcal{V}_n)_{z_{A_0}}}$ on the filtered $(\phi,N)$-module $(\mathcal{V}_n)_{z_{A_0}}$ is trivial. Therefore, since $D_{\mathrm{st},\hat{\Q}_p^\mathrm{unr}}(H^{2n+1}_p(\mathcal{D}))$ is isomorphic to $H^1_\mathrm{dR}(X_\Gamma,\mathcal{V}_n)\otimes (\mathcal{V}_n)_{z_{A_0}}$ in the category of filtered Frobenius monodromy modules, its monodromy operator is given by
\begin{equation}\label{monodromy}
N=\mathrm{id}_n\otimes N_{(\mathcal{V}_n)_{z_{A_0}}}+N_n\otimes\mathrm{id}_{(\mathcal{V}_n)_{z_{A_0}}}=N_n\otimes\mathrm{id}_{(\mathcal{V}_n)_{z_{A_0}}},\end{equation}
where $\mathrm{id}_\bullet$ denote identity operators. 

We now describe the Frobenius operator on $D_{\mathrm{st},\hat{\Q}_p^\mathrm{unr}}(H^{2n+1}_p(\mathcal{D}))$. First, 
$H^1\left(\Gamma,(V_n)_{\hat{\Q}_p^\mathrm{unr}}\right)$ has a Frobenius endomorphism induced by the map $p^\frac{n}{2}\otimes \sigma$ on $(V_n)_{\hat{\Q}_p^\mathrm{unr}}=V_n\otimes_{\Q_p}\hat{\Q}_p^\mathrm{unr}$, where $\sigma$ denotes the absolute Frobenius automorphism on $\hat{\Q}_p^\mathrm{unr}$.
As defined in \cite[Section 4]{IS}, $\Phi_n$ is the unique operator on $H^1_\mathrm{dR}(X_\Gamma,\mathcal{V}_n)$ satisying $N_n\Phi_n=p\Phi_n N_n$ and which is compatible (with respect to $\iota$ and $P$) with the Frobenius on $H^1\left(\Gamma,(V_n)_{\hat{\Q}_p^\mathrm{unr}}\right)$. On the other hand, the Frobenius on the filtered $(\phi,N)$-module $(\mathcal{V}_n)_{z_{A_0}}$ is given by $\Phi_{(\mathcal{V}_n)_{z_{A_0}}}=p^{\frac{n}{2}}\otimes\sigma$ acting on the underlying vector space $V_n\otimes_{\Q_p}\hat{\Q}_p^\mathrm{unr}$.
The Frobenius operator on $D_{\mathrm{st},\hat{\Q}_p^\mathrm{unr}}(H^{2n+1}_p(\mathcal{D}))$ is given by
$$\Phi=\Phi_n\otimes \Phi_{(\mathcal{V}_n)_{z_{A_0}}}.$$
Note that $N$ and $\Phi$ satisfy the relation $N\Phi=p\Phi N$.

Recall that $H^1_\mathrm{dR}(X_\Gamma,\mathcal{V}_n)$ is equipped with a non-degenerate pairing 
\[\langle,\rangle_{\mathcal{V}_n}:H^1_\mathrm{dR}(X_\Gamma,\mathcal{V}_n)\otimes
H^1_\mathrm{dR}(X_\Gamma,\mathcal{V}_n)\longrightarrow \hat{\Q}_{p}^\unr[n+1]\]
in $\mathrm{MF}_{\hat{\Q}_p^\mathrm{unr}}^{\phi,N}$, 
which is induced from $\langle,\rangle_{V_n}$; 
see \cite[\S 5]{IS}, especially \cite[Remark 5.12]{IS}, for definitions and details. 
Let 
\begin{equation}\label{pairing V_n}
\langle,\rangle_{\mathcal{V}_{n,n}}:
H^1_\mathrm{dR}(X_\Gamma,\mathcal{V}_{n,n})
\otimes 
H^1_\mathrm{dR}(X_\Gamma,\mathcal{V}_{n,n})\longrightarrow 
\hat{\Q}_{p}^\unr[n+1]\otimes\det{}^{\otimes{n}}
\end{equation} be the induced symmetric non-degenerate pairing defined by $\langle,\rangle_{\mathcal{V}_{n,n}}=\langle,\rangle_{\mathcal{V}_n}\otimes\langle,\rangle_{V_n}$ (where we also use the isomorphisms 
$(\mathcal{V}_n)_{z_{A_0}}\simeq V_n$ to define a pairing on $(\mathcal{V}_n)_{z_{A_0}}$ via that on 
$V_n$). 
If we denote $V^\vee$ the $F$-linear dual of a $F$-vector space $V$, from \eqref{filtration} 
and the non-degeneracy of $\langle,\rangle_{\mathcal{V}_{n,n}}$ we obtain an isomorphism of $\hat{\Q}_p^\unr$-vector spaces: 
\begin{equation}\label{F n+1}
\frac{D_{\mathrm{st},\hat{\Q}_p^\unr}\left(H_p^{2n+1}(\mathcal{D})\right)}{F^{n+1}\left(D_{\mathrm{st},\hat{\Q}_p^\unr}\left(H_p^{2n+1}(\mathcal{D})\right)\right)}\simeq 
\left(F^{n+1}\left(D_{\mathrm{st},\hat{\Q}_p^\unr}\left(H_p^{2n+1}(\mathcal{D})\right)\right)\right)^\vee
\simeq\left(M_k(\Gamma)\otimes (\mathcal{V}_n)_{z_{A_0}}\right)^\vee.\end{equation}
 
\section{$p$-adic Abel-Jacobi maps}\label{section: Abel-Jacobi}
Let the notation be as in Section \ref{section: motive}: $N=pN^+N^-$ is a factorisation of the integer $N\geq 1$ into coprime integers $p,N^+,N^-$ with $p\nmid N^+N^-$ a prime number, and $N^-$ be a square-free product of an odd number of factors;   
$k\geq 4$ is an even integer and put $n=k-2$ and $m=n/2$; 
$K$ is a quadratic imaginary field such that all primes dividing $N^+$ (respectively, $pN^-$) are split (respectively, inert) in $K$. 

\subsection{Definition of the Abel-Jacobi map}
Let $[\Delta]$ be the class of a null-homologous cycle $\Delta$ of codimension $n+1$ in $\CH^{n+1}(\mathcal{D})(F)$, where $F\subseteq\bar{\Q}$ is a field containing the Hilbert class field of $K$; here 
\[\CH^{n+1}(\mathcal{D})(F)=\epsilon\cdot\CH^{n+1}(Y_m)(F)\]
and null-homologous means that $[\Delta]$ belongs to $\CH^{n+1}_0(\mathcal{D})$, the kernel 
of the cycle class map $\cl_{\mathcal{D}}^{(n+1)}$ in \eqref{cycle map}.  
 Let $\Ext_{G_F}^1(\cdot,\cdot)$ be the first $\Ext$ functor in the category of continuous $G_F$-representations. 
 For a $G_{F}$-representation $M$, let $M(i)$ denote its $i$-th Tate twist. 
One may associate to $[\Delta]$ the isomorphism class 
in
$$\mathrm{Ext}^1_{G_F}\left(\Q_p,\epsilon_* \cdot H^{2n+1}_\text{\'et}(\overline{Y}_m,\Q_p(n+1))\right)=H^1\left(F,H_p^{2n+1}(\mathcal{D})(n+1)\right)$$
of the extension 
\begin{equation}\label{aj1}
0\longrightarrow \epsilon_*\cdot H^{2n+1}_\text{\'et}\left(\overline{Y}_m,\Q_p(n+1)\right)\longrightarrow E\longrightarrow \Q_p\longrightarrow 0\end{equation}
given by the pull-back of the exact sequence (which comes from the Gysin exact sequence 
\cite[Remark 5.4(b)]{Milne})
{\footnotesize{\begin{equation}\label{aj2}
0\longrightarrow \epsilon_*\cdot H^{2n+1}_\text{\'et}\left(\overline{Y}_m,\Q_p(n+1)\right)\longrightarrow \epsilon_* \cdot H^{2n+1}_\text{\'et}\left(\bar{U},\Q_p(n+1)\right)\longrightarrow \epsilon_*\cdot H^{2n+2}_{\overline{\Delta}}\left(\overline{Y}_m,\Q_p(n+1)\right)\longrightarrow0 \end{equation}}}
(where $U=Y_m-\Delta$, $\overline{U}=U\otimes_F\bar{F}$, 
$\bar{\Delta}=\Delta\otimes_F\bar{F}$) 
via the map $\Q_p\rightarrow\epsilon_* \cdot H^{2n+2}_{\overline{\Delta}}\left(\overline{Y}_m,\Q_p(n+1)\right)$ sending $1$ to the cycle class of $\Delta$; see 
\cite[Remark 9.1]{Jannsen} for the definition of the Abel-Jacobi map, and use Proposition \ref{prop3.1} to obtain the above recipe (see also a similar argument using projectors 
as in \cite[\S 3.3]{BDP}). 
This association defines a map,  
called \emph{$p$-adic \'etale Abel-Jacobi map}
\begin{equation}\label{AJ-global}
\cl_{\mathcal{D},0}^{(n+1)}\colon \CH^{n+1}_0(\mathcal{D})(F)\longrightarrow \Ext^1_{G_F}\left(\Q_p,H^{2n+1}_p(\mathcal{D})(n+1)\right)=H^1\left(F,H_p^{2n+1}(\mathcal{D})(n+1)\right).\end{equation}

\subsection{Semistability} 
We now use $p$-adic Hodge theory to describe the restriction of $\AJ_p$ to $\mathrm{CH}^{n+1}(\mathcal{D})(F_v)$, where $v$ is the place of $F$ above $p$ induced by the inclusion $F\subseteq\overline{\Q}\hookrightarrow\C_p$, which for simplicity we assume to be unramified over $p$; here $F_v$ is the completion of $F$ at $v$, which we also assume to contain $\Q_{p^2}$.
The motive $\mathcal{D}$ is then defined over $F_v$, because the prime $p$, being inert in $K$, splits completely in its Hilbert class field $H$. 
Consider the base change of $Y_m$ to $F_v$ that we also denote by $Y_m$ by a slight abuse of notation, 
and the Abel-Jacobi map
\[\cl_{\mathcal{D},0}^{(n+1)}\colon \CH^{n+1}_0(\mathcal{D})(F_v)\longrightarrow \Ext^1_{G_{F_v}}\left(\Q_p,H^{2n+1}_p(\mathcal{D})(n+1)\right) = H^1\left(F_v,H_p^{2n+1}(\mathcal{D})(n+1)\right)\] obtained by restriction. For a $G_{F_v}=\Gal(\bar{F}_v/F_v)$-representation $V$, let $H^1_\mathrm{st}(G_{F_v},V)$ be the semistable Bloch-Kato Selmer group (\cite[\S 3]{BK}, or \cite[page 361]{IS}). 
By a result of Nekov\'a\v{r} \cite[Theorem 3.6]{Nekovar}
(see also \cite[Lemma 7.1]{IS} and the remarks following it), we know that 
the image of $\AJ_p$ is contained in
$H^1_\mathrm{st}\left(F_v,H_p^{2n+1}(\mathcal{D})(n+1)\right)$. 
We have 
\[H^1_\mathrm{st}\left(F_v,H_p^{2n+1}(\mathcal{D})(n+1)\right)\simeq \Ext^1_{\mathrm{Rep}_\mathrm{st}(G_{F_v})}\left(F_v(n+1),H^{2n+1}_p(\mathcal{D})\right)\]
where $\mathrm{Rep}_\mathrm{st}(G_{F_v})$ denotes the category of semistable $p$-adic representations of $G_{F_v}$, and $\Ext^1_{\mathrm{Rep}_\mathrm{st}(G_{F_v})}(\cdot,\cdot)$ is the first Ext functor in this category. 
The functor $D_{\mathrm{st},F_v}$ gives an isomorphism 
\[\Ext^1_{\mathrm{Rep}_\mathrm{st}(G_{F_v})}\left(F_v(n+1),H_p^{2n+1}(\mathcal{D})\right)
\simeq 
\Ext^1_{\mathrm{MF}_{F_v}^{\phi,N}}\left(F_v[n+1],D_{\mathrm{st},F_v}(H_p^{2n+1}(
\mathcal{D}))\right)
\] where now $\Ext^1_{\mathrm{MF}_{F_v}^{\phi,N}}(\cdot,\cdot)$ denotes the first Ext functor in the category $\mathrm{MF}_{F_v}^{\phi,N}$ (\cite[(44)]{IS}), and for an object $M$ in this category, $M[i]$ is its $i$-th fold twist described in \cite[\S2]{IS}. 
By \cite[Lemma 2.1]{IS}, 
\[\Ext^1_{\mathrm{MF}_{F_v}^{\phi,N}} \left(F_v[n+1],D_{\mathrm{st},F_v}(H_p^{2n+1}(
\mathcal{D}))\right)\simeq 
\frac{D_{\mathrm{st},F_v}(H_p^{2n+1}(
\mathcal{D}))}{F^{n+1}\left(D_{\mathrm{st},F_v}(H_p^{2n+1}(
\mathcal{D}))\right)}.\] Therefore we conclude that 
\[H^1_\mathrm{st}\left(F_v,H_p^{2n+1}(\mathcal{D})(n+1)\right)\simeq 
\frac{D_{\mathrm{st},F_v}(H_p^{2n+1}(
\mathcal{D}))}{F^{n+1}\left(D_{\mathrm{st},F_v}(H_p^{2n+1}(
\mathcal{D}))\right)}.\] 
Finally, using the canonical map 
$D_{\mathrm{st},F_v}
\left(H_p^{2n+1}(\mathcal{D})\right)\hookrightarrow D_{\mathrm{st},\hat{\Q}_p^\unr}
\left(H_p^{2n+1}(\mathcal{D})\right)$ (which respects the filtrations on both sides) and 
\eqref{F n+1}, we obtain from $\cl_{\mathcal{D},0}^{(n+1)}$ a map $\AJ_p$ still called 
$p$-adic Abel-Jacobi map, 
\begin{equation}\label{Abel-Jacobi} 
\AJ_p\colon \CH^{n+1}(\mathcal{D})(F_v)\longrightarrow
\left(M_k(\Gamma)\otimes (\mathcal{V}_n)_{z_{A_0}}\right)^\vee.\end{equation}

\subsection{The de Rham realization} 
We now introduce, following 
\cite{IS}, a more concrete description of the map \eqref{Abel-Jacobi}. 
Fix a point $x_A\in X_M(F)$ (as above, $F\subseteq\bar{\Q}$) 
which  reduces to a non-singular point in the special fiber of $X_M$, and let $A^m\times A_0^m$ 
be the fiber of $\mathcal{A}^m\times A_0^m\rightarrow X_M$ at $x_A$.  
Define \[H^1(\overline{X}_M,\mathbb{L}_{n,n})= H^1(\overline{X}_M,\mathbb{L}_{n})\otimes
(\mathbb{L}_n)_{\bar{x}_{A_0}}.\] 
Let $\bar{x}_A=x_A\otimes_F\bar{F}$, $U_{x_A}=X_M-\{x_A\}$ and $\overline{U}_{x_A}=U_{x_A}\otimes_{F}\bar{F}$. 
The Gysin sequence gives rise to an exact sequence 
\[0\longrightarrow H^1(\overline{X}_M,\mathbb{L}_{n,n})(n+1)\longrightarrow 
H^1(\overline{U}_{x_A},\mathbb{L}_{n,n})(n+1)\longrightarrow \left((\mathbb{L}_n)_{\bar{x}_A}\otimes (\mathbb{L}_n)_{\bar{x}_{A_0}}\right)(n)\longrightarrow 0\]
whose surjectivity follows from the analogous exact sequence in 
\cite[(51)]{IS} tensoring with the constant sheaf $(\mathbb{L}_n)_{\bar{x}_{A_0}}$. Applying the projector $(p_G)_*$ we obtain an exact sequence 
\begin{equation}\label{37IS}
0\longrightarrow H_p(\mathcal{D})(n+1)\longrightarrow E\longrightarrow 
\left((\mathbb{L}_n)_{\bar{x}_A}\otimes (\mathbb{L}_n)_{\bar{x}_{A_0}}\right)(n)\longrightarrow 0.\end{equation} 

Suppose $F\subseteq \hat{\Q}_p^\unr$. Let $z_A$ and $z_{A_0}$ be the points in $\mathcal{H}_p(\hat{\Q}_p^\unr)$ lying over $x_A$ and $x_{A_0}$, respectively (using \eqref{split}). 
Define  
$U_{z_A}=X_\Gamma-\{z_A\}$ and put 
\begin{equation}\label{iso2}
H^1_\mathrm{dR}(U_{z_A},\mathcal{V}_{n,n})=
H^1_\mathrm{dR}(U_{z_A},\mathcal{V}_{n})\otimes (\mathcal{V}_n)_{z_{A_0}}.\end{equation}
Let $\mathrm{Res}_{z}:H^1_\mathrm{dR}(U,\mathcal{V}_n)\rightarrow (\mathcal{V}_n)_{z}$ be the residue map at a point $z\in X_\Gamma(\hat{\Q}_p^\unr)$.  
The Gysin sequence of \cite[Theorem 5.13]{IS} gives rise, after tensoring with $(\mathcal{V}_n)_{z_{A_0}}$
and using \eqref{iso1}, \eqref{iso2}, to an exact sequence 
in $\mathrm{MF}^{\phi,N}_{\hat{\Q}_p^\unr}$: 
{\footnotesize{\begin{equation}\label{5.13IS}
0\longrightarrow H^1_{\mathrm{dR}}(X_\Gamma,\mathcal{V}_{n,n})[-(n+1)]\longrightarrow
H^1_\mathrm{dR}(U_{z_A},\mathcal{V}_{n,n})[-(n+1)]\overset{\mathrm{Res}_{z_A}}\longrightarrow
\left((\mathcal{V}_n)_{z_A}\otimes (\mathcal{V}_{n})_{z_{A_0}}\right)[-n]\longrightarrow 0.\end{equation}}}
This exact sequence is obtained by applying $D_{\mathrm{st},\hat{\Q}_p^\unr}$ to \eqref{37IS}. 

\begin{remark}
The shift in \eqref{5.13IS} is due to the definition of Tate twists adopted in \cite[page 337]{IS}; see \cite[\S7.1.3]{FO} or \cite[\S8.3]{BrCo} for a different convention.  
\end{remark} 

We have the cycle class map 
{\footnotesize{\[\begin{split}
\cl=\cl_{(A^m\times A_0^m,\epsilon_M)}^{(n)}: \CH^{n}((A^m\times A_0^m,\epsilon_M))\longrightarrow &
\Gamma\left(D_{\mathrm{st},\hat{\Q}_p^\unr}(H^{2n}_p((A^m\times A_0^m,\epsilon_M)(n)))\right)\\
& \simeq\Gamma\left(D_{\mathrm{st},\hat{\Q}_p^\unr}(H^{2n}(\overline{X}_M,(\mathbb{L}_n)_{x_A}\otimes (\mathbb{L}_n)_{x_{A_0}})(n))\right)\\
& \simeq\Gamma\left(D_{\mathrm{st},\hat{\Q}_p^\unr}(((\mathbb{L}_n)_{\bar{x}_A}\otimes (\mathbb{L}_n)_{\bar{x}_{A_0}})(n)))\right)\\
& \simeq\Gamma\left(((\mathcal{V}_n)_{z_A}\otimes(\mathcal{V}_n)_{z_{A_0}})[-n]\right).
\end{split}\]}}
Next, from \eqref{5.13IS} we obtain a connecting homomorphism in the sequence of $\Ext$ groups  
\[\begin{split}
\Gamma\left(((\mathcal{V}_n)_{z_A}\otimes(\mathcal{V}_n)_{z_{A_0}})[-n]\right)\overset\partial\longrightarrow &
\Ext^1_{\mathrm{MF}_{\hat{\Q}_p^\unr}^{\phi,N}}\left(\hat{\Q}_p^\unr,H^1_{\mathrm{dR}}(X_\Gamma,\mathcal{V}_{n,n})[-(n+1)]\right)\\
&\simeq \Ext^1_{\mathrm{MF}_{\hat{\Q}_p^\unr}^{\phi,N}}\left(\hat{\Q}_p^\unr[n+1],H^1_{\mathrm{dR}}(X_\Gamma,\mathcal{V}_{n,n})\right)\\
&\simeq \left(M_k(\Gamma)\otimes(\mathcal{V}_n)_{z_{A_0}}\right)^\vee\end{split}\]
where the last isomorphism comes, as before, from \eqref{F n+1} and \cite[Lemma 2.1]{IS}.   
On the other hand, we have a canonical map 
\[
i:\CH^{n}((A^m\times A_0^m,\epsilon_M))\longrightarrow \CH^{n+1}(\mathcal{D}) .\]
The definition of the Abel-Jacobi map (\cite[\S9]{Jannsen}) shows 
that the following diagram is commutative: 
\begin{equation}\label{IS7.3}\xymatrix{
i^{-1}\left(\CH^{n+1}_0(\mathcal{D})\right)\ar[r]^-{\cl}\ar[d]^i  & 
\Gamma\left(((\mathcal{V}_n)_{z_A}\otimes(\mathcal{V}_n)_{z_{A_0}})[-n]\right)\ar[d]^\partial\\
\CH^{n+1}_0(\mathcal{D})\ar[r]^-{\AJ_p}&\left(M_k(\Gamma)\otimes(\mathcal{V}_n)_{z_{A_0}}\right)^\vee.}
\end{equation}
Suppose that $\Delta$ is supported in the fiber of $\mathcal{D}$ above $x_A\in X_M(F)$, then $\AJ_p(\Delta)$ is the 
extension class determined by the following diagram (in which the right square is cartesian) 
\begin{equation}\label{diagramAJ}
\xymatrix{0\ar[r] & 
H^1_{\mathrm{dR}}(X_\Gamma,\mathcal{V}_{n,n})\ar[r]^{j_*} & 
H^1_\mathrm{dR}(U_{z_A},\mathcal{V}_{n,n})\ar[r]^-{{\mathrm{Res}_{z_A}}}& 
\left((\mathcal{V}_n)_{z_A}\otimes(\mathcal{V}_n)_{z_{A_0}}\right)[1]\ar[r]& 0\\
0\ar[r] & 
H^1_{\mathrm{dR}}(X_\Gamma,\mathcal{V}_{n,n})\ar[r] \ar@{=}[u]& 
E\ar[u]\ar[r]&\hat{\Q}_p^\unr[n+1]\ar[u] \ar[r]&0}\end{equation}
where the vertical left map sends $1\longmapsto\cl(\Delta)[n+1]$.

\section{Generalized Heegner cycles}\label{section: generalized Heegner cycles}

\subsection{Definition of the cycles} We fix a field $F$ containing 
the Hilbert class field $H$ of $K$. Recall the fixed abelian surface $A_0$ with QM and complex multiplication by $\mathcal{O}_K$. 
Consider the set of pairs $(\varphi,A)$, where $A$ is an abelian surface with QM and $\varphi\colon A_0\rightarrow A$ is a false isogeny (defined over $\bar{K}$) of false elliptic curves, of degree prime to $N^+M$, \emph{i.e.} whose kernel intersects the level structures of $A_0$ trivially. 
Let $x_{A}$ be the point on $X_M$ corresponding to $A$ with level structure given by composing $\varphi$ with the level structure of $A_0$. We associate to any pair $(\varphi,A)$ a codimension $n+1$ cycle $\Upsilon_\varphi$ on $Y_m$ by defining
$$\Upsilon_\varphi:=(\Gamma_\varphi)^m\subset (A_0\times A)^m\simeq A^m\times A^m_0\subset\mathcal{A}^m\times A^m_0,$$
where $\Gamma_\varphi\subset A_0\times A$ is the graph of $\varphi$ and the inclusion $A^m\times A_0^m\subset\mathcal{A}^m\times A_0^m$ is $\mathrm{id}_{A_0}^m$ on the second component. We then set
$$\Delta_\varphi:=\epsilon\Upsilon_\varphi.$$
The cycle $\Delta_\varphi$ of $\mathcal{D}$ is supported on the fiber above $x_{A}$ and has codimension $n+1$ in $\mathcal{A}^m\times A_0^m$, thus $\Delta_\varphi\in \mathrm{CH}^{n+1}(\mathcal{D})$. Since the cycle class map sends $\Delta_\varphi$ to the $p$-adic realization $H_p^{2n+2}(\mathcal{D})$ and $H_p^{2n+2}(\mathcal{D})=0$, the cycle $\Delta_\varphi$ is homologous to zero. 

\subsection{The image of $\Delta_\varphi$ under the $p$-adic Abel-Jacobi map}\label{sec5.2}

For any $D\in\mathrm{MF}_{\hat{\Q}_p^\unr}^{\phi,N}$, write 
$D=\oplus_{\lambda}D_\lambda$ for its slope decomposition, 
where $\lambda\in\Q$
(\cite[(2)]{IS}). 
Recall the monodromy operator $N$ introduced in \eqref{monodromy}. 
\begin{lemma}\label{lemma slope}  $N$ induces an isomorphism 
$H^1_\mathrm{dR}(X_\Gamma,\mathcal{V}_{n,n})_{n+1}\simeq 
H^1_\mathrm{dR}(X_\Gamma,\mathcal{V}_{n,n})_{n}$. 
\end{lemma}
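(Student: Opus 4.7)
The plan is to reduce the statement, via the tensor-product structure of $\mathcal{V}_{n,n}$, to the analogous isomorphism for the monodromy operator $N_n$ on the smaller module $H^1_\mathrm{dR}(X_\Gamma,\mathcal{V}_n)$, and then to extract the latter from the structure theory of \cite{IS}. By Proposition \ref{prop3.2} and equation \eqref{monodromy}, the module $H^1_\mathrm{dR}(X_\Gamma,\mathcal{V}_{n,n})=H^1_\mathrm{dR}(X_\Gamma,\mathcal{V}_n)\otimes(\mathcal{V}_n)_{z_{A_0}}$ carries the monodromy $N=N_n\otimes\mathrm{id}$. The factor $(\mathcal{V}_n)_{z_{A_0}}$ is pure of Frobenius slope $n/2$ with trivial monodromy, since $\Phi_{(\mathcal{V}_n)_{z_{A_0}}}=p^{n/2}\otimes\sigma$. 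Consequently, the slope $\mu$-component of the tensor product is $H^1_\mathrm{dR}(X_\Gamma,\mathcal{V}_n)_{\mu-n/2}\otimes(\mathcal{V}_n)_{z_{A_0}}$, and the claim reduces to showing that $N_n$ induces an isomorphism $H^1_\mathrm{dR}(X_\Gamma,\mathcal{V}_n)_{n/2+1}\xrightarrow{\sim}H^1_\mathrm{dR}(X_\Gamma,\mathcal{V}_n)_{n/2}$.

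For this reduced statement I would invoke the slope decomposition of $H^1_\mathrm{dR}(X_\Gamma,\mathcal{V}_n)$ as developed in \cite{IS}: its Frobenius slopes are precisely $n/2$ and $n/2+1$. The slope $n/2$ piece is exactly $\iota\bigl(H^1(\Gamma,(V_n)_{\hat{\Q}_p^\unr})\bigr)$, since $\Phi_n$ is compatible under $\iota$ with the Frobenius $p^{n/2}\otimes\sigma$ acting on group cohomology. The relation $N_n\Phi_n=p\Phi_n N_n$ then forces $N_n$ to lower Frobenius slopes by one, so that $N_n$ vanishes on the slope $n/2$ component and sends the slope $n/2+1$ component into the slope $n/2$ one. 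Moreover, the two slope pieces have the same dimension; this can be read off from the Hodge filtration \eqref{de Rham filtration}, whose jump at $n+1$ has dimension $\dim M_k(\Gamma)$, combined with the non-degenerate self-duality pairing $\langle,\rangle_{\mathcal{V}_n}$, which necessarily exchanges the two slope components and so equates their dimensions.

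Using the explicit formula $N_n=\iota\circ(-\epsilon)\circ I$ from the paragraph following \eqref{monodromy}, the image of $N_n$ is contained in $\iota\bigl(H^1(\Gamma,(V_n)_{\hat{\Q}_p^\unr})\bigr)$, i.e.\ in the slope $n/2$ part. Surjectivity of $N_n$ onto this slope component is equivalent to surjectivity of $\epsilon\circ I$ onto $H^1(\Gamma,(V_n)_{\hat{\Q}_p^\unr})$, which in turn follows from the Mayer--Vietoris set-up employed in \cite{IS} to construct $\iota$ and its left inverse $P$. Combining this surjectivity with the equality of dimensions established above, the restriction of $N_n$ to $H^1_\mathrm{dR}(X_\Gamma,\mathcal{V}_n)_{n/2+1}$ is an isomorphism onto $H^1_\mathrm{dR}(X_\Gamma,\mathcal{V}_n)_{n/2}$, which in view of the reduction in the first paragraph completes the proof.

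The main point requiring care is the extraction from \cite{IS} of the Frobenius slope structure of $H^1_\mathrm{dR}(X_\Gamma,\mathcal{V}_n)$ together with the dimension equality of its two slope components; these facts are implicit in the construction of $\iota$, $P$, $I$ and $\epsilon$ there, but need to be assembled with consistent conventions for the Frobenius normalisation and the Tate twists. Once this structural input is available, the remainder is a formal manipulation with the tensor product decomposition and the explicit shape of $N_n$.
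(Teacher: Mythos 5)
Your argument is correct and follows essentially the same route as the paper: both reduce, via $N=N_n\otimes\mathrm{id}$ and the fact that $(\mathcal{V}_n)_{z_{A_0}}$ is isotypical of slope $n/2$, to the statement that $N_n$ induces an isomorphism $H^1_\mathrm{dR}(X_\Gamma,\mathcal{V}_n)_{\frac{n}{2}+1}\simeq H^1_\mathrm{dR}(X_\Gamma,\mathcal{V}_n)_{\frac{n}{2}}$, which the paper simply cites from \cite{IS}. The one caveat in your extra detail is that the surjectivity of $N_n$ onto the slope-$\frac{n}{2}$ part is not a formal consequence of the Mayer--Vietoris set-up but is the substance of the monodromy-weight theorem of Iovita--Spiess (resting on the non-degeneracy of the pairing between harmonic cocycles and $H^1\left(\Gamma,(V_n)_{\hat{\Q}_p^\unr}\right)$); since that is exactly the result being cited, this does not affect correctness.
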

\begin{proof}
Since the monodromy operator $N$ and the Frobenius $\Phi$ on $H^1_\mathrm{dR}(X_\Gamma,\mathcal{V}_{n,n})$ satisfy the relation $N\Phi=p\Phi N$, we have $N\left(H^1_\mathrm{dR}(X_\Gamma,\mathcal{V}_{n,n})_{n+1}\right)\subseteq
H^1_\mathrm{dR}(X_\Gamma,\mathcal{V}_{n,n})_{n}$. Since 
$(\mathcal{V}_n)_{z_{A_0}}$ is isotypical of slope $n/2$, we have 
$$H^1_\mathrm{dR}(X_\Gamma,\mathcal{V}_{n,n})_{n+1}=H^1_\mathrm{dR}(X_\Gamma,\mathcal{V}_n)_{\frac{n}{2}+1}\otimes (\mathcal{V}_n)_{z_{A_0}}$$
and 
$$H^1_\mathrm{dR}(X_\Gamma,\mathcal{V}_{n,n})_{n}=H^1_\mathrm{dR}(X_\Gamma,\mathcal{V}_n)_{\frac{n}{2}}\otimes (\mathcal{V}_n)_{z_{A_0}}.$$  
By \cite{IS}, we know that $N_n\colon H^1_\mathrm{dR}(X_\Gamma,\mathcal{V}_n)_{\frac{n}{2}+1}\rightarrow H^1_\mathrm{dR}(X_\Gamma,\mathcal{V}_n)_\frac{n}{2}$ is an isomorphism, thus the restriction of $N$ to $H^1_\mathrm{dR}(X_\Gamma,\mathcal{V}_{n,n})_{n+1}$ is an isomorphism by the definition of the monodromy operator $N$ given in \eqref{monodromy}. 
\end{proof}

Fix $f\in M_k(\Gamma)$ and $v\in(\mathcal{V}_n)_{z_{A_0}}$.
Thanks to Lemma \ref{lemma slope}, we can apply 
\cite[Lemma 2.1]{IS} (see also \cite[Lemma 3.3]{Masdeu}) to compute 
$\AJ_p(\Delta_\varphi)( f\otimes v)$. 
With the notation as in \eqref{diagramAJ}, and following \emph{loc. cit}, choose $\alpha\in H^1_\mathrm{dR}(U_{z_A},\mathcal{V}_{n,n})_{n+1}$ such that 
\[\mathrm{Res}_{z_A}(\alpha)=\cl_A(\Delta_\varphi)\] and $N(\alpha)=0$.  
Choose $\beta$ in $H^1_\mathrm{dR}(X_\Gamma,\mathcal{V}_{n,n})$ such that 
\[j_*(\beta)\equiv\alpha\mod F^{n+1}\left(H^1_\mathrm{dR}(U_{z_A},\mathcal{V}_{n,n})\right).\]
Then the image of the extension $\cl_{\mathcal{D},0}^{(n+1)}(\Delta_\varphi)$ in 
\[H^1_\mathrm{dR}(X_\Gamma,\mathcal{V}_{n,n})/F^{n+1}\left(H^1_\mathrm{dR}(X_\Gamma,\mathcal{V}_{n,n})\right)\simeq (M_k(\Gamma)\otimes(\mathcal{V}_n)_{z_{A_0}})^\vee\] is the class of $\beta$ (which we denote by the same symbol $\beta$) in this quotient.  
Let $\omega_f$ be the class in 
$F^{n+1}\left(H^1_\mathrm{dR}(X_\Gamma,\mathcal{V}_n)\right)$ corresponding to 
$f\in M_k(\Gamma)$ under the isomorphism \eqref{de Rham filtration}. 
Recall the pairing $\langle,\rangle_{\mathcal{V}_{n,n}}$ defined in \eqref{pairing V_n}. 
Then by definition 
\begin{equation}\label{first equality AJ}
\AJ_p(\Delta_\varphi)(f\otimes v)=\langle\omega_f\otimes v,\beta\rangle_{\mathcal{V}_{n,n}}.\end{equation}

From the proof of \cite[Theorem 6.4]{IS} we know that 
$H^1_\mathrm{dR}(X_\Gamma,\mathcal{V}_n)$ 
decomposes as the direct sum of $H_\mathrm{dR}^1(X_\Gamma,\mathcal{V}_n)_{\frac{n}{2}}$ and $F^{\frac{n}{2}+1}\left(H_\mathrm{dR}^1(X_\Gamma,\mathcal{V}_n)\right)$. 
Since $$F^{\frac{n}{2}+1}\left(H_\mathrm{dR}^1(X_\Gamma,\mathcal{V}_n)\right)=F^{n+1}\left(H_\mathrm{dR}^1(X_\Gamma,\mathcal{V}_n)\right)$$ and $F^{n+1}((\mathcal{V}_n)_{z_{A_0}})=0$, using the previous decomposition, and the fact that, as above,  
$(\mathcal{V}_n)_{z_{A_0}}$ is isotypical of slope $n/2$, we obtain a decomposition 
\[H^1_\mathrm{dR}(X_\Gamma,\mathcal{V}_{n,n})\simeq H^1_\mathrm{dR}(X_\Gamma,\mathcal{V}_{n,n})_n\oplus F^{n+1}\left(H^1_{\mathrm{dR}}(X_\Gamma,\mathcal{V}_{n,n})\right).\]
We may therefore assume that the element $\beta$ considered above belongs to $H^1_\mathrm{dR}(X_\Gamma,\mathcal{V}_{n,n})_n$. 
Moreover, again from the proof of \cite[Theorem 6.4]{IS}
we know that 
\begin{equation}\label{eq25}
\ker(N_n)=\iota\left(H^1(\Gamma,(V_n)_{\Q_p^\mathrm{unr}})\right)=H^1_\mathrm{dR}(X_\Gamma,\mathcal{V}_n)_\frac{n}{2}\end{equation}
where $\iota$ is the map considered in \eqref{iota}. 
To simplify the notation we put 
\[H^1(\Gamma,V_{n,n})=H^1\left(\Gamma,(V_n)_{\hat{\Q}_p^\mathrm{unr}}\right)\otimes(\mathcal{V}_n)_{z_{A_0}}.\]
We now extend $\iota$ to a map, still denoted by the same symbol, 
\[\iota=\iota\otimes\mathrm{id}_{(\mathcal{V}_n)_{z_{A_0}}}\colon H^1(\Gamma,V_{n,n})\longmono H^1_\mathrm{dR}(X_\Gamma,\mathcal{V}_{n,n})\]
and \eqref{eq25} shows that there exists an isomorphisms 
 \[\ker(N)=\iota\left(H^1(\Gamma,V_{n,n})\right)=H^1_\mathrm{dR}(X_\Gamma,\mathcal{V}_{n,n})_n.\]
 Therefore we may assume $\beta=\iota(c)$ for some $c\in  H^1(\Gamma,V_{n,n})$. 
 
We now introduce still an other pairing $\langle,\rangle_\Gamma$. Let $C_\mathrm{har}(V_n)^\Gamma$ denote the 
$\Q_p$-vector space of $\Gamma$-invariant $V_n$-valued 
harmonic cocycles (see for example \cite[Definition 2.2.9]{DasgTeit}). 
We denote 
\[\langle,\rangle_{\Gamma}': 
C_\mathrm{har}(V_{n})^\Gamma\otimes H^1(\Gamma,V_n)\longrightarrow \Q_p\]
the pairing introduced in \cite[(75)]{IS}. 
To simplify the notation, we set 
\[C_\mathrm{har}(V_{n,n})^\Gamma=C_\mathrm{har}(V_{n})^\Gamma\otimes(\mathcal{V}_n)_{z_{A_0}}.\]
We then define the pairing 
\[\langle,\rangle_{\Gamma}: 
C_\mathrm{har}(V_{n,n})^\Gamma\otimes H^1(\Gamma,V_{n,n})\longrightarrow \Q_p\]
by $\langle,\rangle_{\Gamma}=\langle,\rangle_{\Gamma}'\otimes\langle,\rangle_{V_n}$ (where as above we identify $(\mathcal{V}_n)_{z_{A_0}}$ and $V_n$). 
Recall the map $I$ is defined in \eqref{I}. 

\begin{lemma} \label{lemma5.2}
$\langle\omega_f\otimes v,\beta\rangle_{\mathcal{V}_{n,n}}=-\langle I(\omega_f)\otimes v,c\rangle_\Gamma.$\end{lemma}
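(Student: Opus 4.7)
The strategy is to reduce the identity to the analogous one-variable formula already established in the setting of Iovita--Spiess, and then separate the auxiliary factor coming from $A_0$ via the tensor product structure of both pairings.

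First, by definition the pairing $\langle,\rangle_{\mathcal{V}_{n,n}}$ in \eqref{pairing V_n} factors as $\langle,\rangle_{\mathcal{V}_n}\otimes\langle,\rangle_{V_n}$ under the splitting $H^1_{\mathrm{dR}}(X_\Gamma,\mathcal{V}_{n,n})=H^1_{\mathrm{dR}}(X_\Gamma,\mathcal{V}_n)\otimes(\mathcal{V}_n)_{z_{A_0}}$, and analogously $\langle,\rangle_\Gamma=\langle,\rangle_\Gamma'\otimes\langle,\rangle_{V_n}$ by construction. Writing $c=\sum_i c_i\otimes v_i$ with $c_i\in H^1(\Gamma,(V_n)_{\hat{\Q}_p^\unr})$ and $v_i\in (\mathcal{V}_n)_{z_{A_0}}$, the map $\iota$ extends as stated to $\iota(c)=\sum_i \iota(c_i)\otimes v_i$, and $\omega_f\otimes v$ lies in the first tensor factor on $f$ and the second factor on $v$. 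Therefore both sides of the claimed identity split as
\[
\langle\omega_f\otimes v,\iota(c)\rangle_{\mathcal{V}_{n,n}}
=\sum_i\langle\omega_f,\iota(c_i)\rangle_{\mathcal{V}_n}\,\langle v,v_i\rangle_{V_n},
\qquad
\langle I(\omega_f)\otimes v,c\rangle_\Gamma=\sum_i\langle I(\omega_f),c_i\rangle_\Gamma'\,\langle v,v_i\rangle_{V_n}.
\]
Thus it suffices to establish the one-variable identity
\[
\langle\omega_f,\iota(c_i)\rangle_{\mathcal{V}_n}=-\langle I(\omega_f),c_i\rangle_\Gamma'
\]
for every $c_i\in H^1(\Gamma,(V_n)_{\hat{\Q}_p^\unr})$.

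This one-variable compatibility is precisely the duality, in the setting of Iovita--Spiess, between the embedding $\iota$ of \eqref{iota} and the residue/annular map $I$ of \eqref{I}, both viewed through the monodromy operator $N_n$. Concretely, one uses that on the slope-$(n+1)/2$ piece the de Rham cohomology class $\iota(c_i)$ represents the class of a cocycle $\gamma\mapsto\gamma(F_{c_i})-F_{c_i}$ for a Coleman primitive, while $I(\omega_f)$ records the annular residues of $\omega_f$; the pairing $\langle,\rangle_{\mathcal{V}_n}$ can be computed by a Mayer--Vietoris/residue computation that pairs $\omega_f$ with such a cocycle class, and the resulting formula is exactly the harmonic-cocycle pairing $\langle,\rangle_\Gamma'$ up to the sign coming from the convention $N_n=\iota\circ(-\epsilon)\circ I$. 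This is already carried out in the one-variable setting in \cite[Theorem 6.4]{IS} (see also the analogous computation in \cite[Lemma 3.3]{Masdeu}), which is the result we invoke.

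Having the one-variable identity, substituting back gives
\[
\langle\omega_f\otimes v,\beta\rangle_{\mathcal{V}_{n,n}}=\sum_i\bigl(-\langle I(\omega_f),c_i\rangle_\Gamma'\bigr)\langle v,v_i\rangle_{V_n}=-\langle I(\omega_f)\otimes v,c\rangle_\Gamma,
\]
since $\beta=\iota(c)$. The only potential obstacle is tracking the sign conventions: the minus sign on the right is forced by the definition of the monodromy operator $N_n=\iota\circ(-\epsilon)\circ I$, and by the convention used in \cite[(75)]{IS} for the pairing $\langle,\rangle_\Gamma'$. Once these are aligned with the identification \eqref{F n+1} of $(M_k(\Gamma)\otimes(\mathcal{V}_n)_{z_{A_0}})^\vee$ used in \eqref{first equality AJ}, the computation is straightforward.
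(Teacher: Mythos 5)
Your proof is correct and follows essentially the same route as the paper: decompose both pairings along the tensor factorisation $\langle,\rangle_{\mathcal{V}_{n,n}}=\langle,\rangle_{\mathcal{V}_n}\otimes\langle,\rangle_{V_n}$ and $\langle,\rangle_\Gamma=\langle,\rangle_\Gamma'\otimes\langle,\rangle_{V_n}$, write $\beta=\iota(c)$ componentwise, and reduce to the one-variable identity $\langle\omega_f,\iota(c_i)\rangle_{\mathcal{V}_n}=-\langle I(\omega_f),c_i\rangle_\Gamma'$ from Iovita--Spiess. The only correction is that the precise reference for that one-variable identity is \cite[Theorem 10.2]{IS} rather than \cite[Theorem 6.4]{IS}.
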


\begin{proof}
Write $\beta=\sum_i\beta_i\otimes v_i$ and $c=\sum_jc_j\otimes w_j$. 
The assumption $\iota(c)=\beta$ shows that $i=j$, $v_i=w_i$ and 
$\iota(c_i)=\beta_i$ where here $\iota$ is the map in \eqref{iota}. 
By \cite[Theorem 10.2]{IS} we know that for each $i$ we have 
$\langle \omega_f,\beta_i\rangle_{\mathcal{V}_n}=-\langle I(\omega_f),c_i\rangle'_\Gamma$. 
The definitions of $\langle,\rangle_{\mathcal{V}_{n,n}}$ and $\langle,\rangle_\Gamma$ imply the result. 
\end{proof}

Recall the open set $U_{z_A}=X_\Gamma-\{z_A\}$. 
Write $\alpha-j_*(\beta)=\sum_i\gamma_i\otimes v_i$. 
For each $i$, let $\chi_i$ be a $\Gamma$-invariant 
${V}_{n}$-valued meromorphic differential 
form on $\mathcal{H}_p$ which is holomorphic outside 
$\pi^{-1}(U_{z_A})$, with a simple pole at $z_A$, 
and whose class $[\chi_i]$ in $F^{\frac{n}{2}+1}\left(H^1_\mathrm{dR}(U_{z_A},\mathcal{V}_{n})\right)$
represents $\gamma_i$. Then the class of $\chi=\sum_i\chi_i\otimes v_i$ represents $\alpha-j_*(\beta)$. 

Having identified 
$H^1_\mathrm{dR}(X_\Gamma,\mathcal{V}_n)$ with the $\hat{\Q}_p^\unr$-vector space of $\Gamma$-invariant $V_n$-valued differential forms of the second kind on $\mathcal{H}_p$ modulo exact forms, denote $F_{\omega_f}\in H^0(X_\Gamma,\mathcal{V}_n)$ the Coleman primitive of $\omega_f$ (\cite[\S 2.3]{deshalit}).  
Having fixed $n$, we write $\langle,\rangle_{z,z_{A_0}}$ for the restriction of 
$\langle,\rangle_{\mathcal{V}_{n,n}}$ to the stalk of $\mathcal{V}_{n,n}$ at $z$. 
Then $\langle,\rangle_{z,z_{A_0}}$ is a pairing on $(\mathcal{V}_n)_{z}\otimes(\mathcal{V}_n)_{z_{A_0}}$. 

\begin{lemma}\label{lemma5.3} 
$-\langle I(\omega_f)\otimes v,c\rangle_\Gamma=\langle F_{\omega_f}(z_A)\otimes v,\mathrm{Res}_{z_A}(\chi)\rangle_{{z_A,z_{A_0}}}$. \end{lemma}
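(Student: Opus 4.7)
The strategy is to reduce the identity to a single-factor Coleman--residue formula on the Mumford curve via the tensor product structure of all the pairings and vector spaces involved. First, I would write
\[
c=\sum_i c_i\otimes w_i \in H^1\!\left(\Gamma,(V_n)_{\hat{\Q}_p^\unr}\right)\otimes(\mathcal{V}_n)_{z_{A_0}},\qquad \chi=\sum_i\chi_i\otimes v_i,
\]
and use the relation $\iota(c)=\beta$ together with $\alpha-j_*(\beta)=\sum_i\gamma_i\otimes v_i$ and $[\chi_i]=\gamma_i$ to arrange, exactly as in the proof of Lemma \ref{lemma5.2}, that $v_i=w_i$. By the definitions $\langle,\rangle_\Gamma=\langle,\rangle'_\Gamma\otimes\langle,\rangle_{V_n}$ and the fact that $\langle,\rangle_{z_A,z_{A_0}}$ is the tensor-product restriction of $\langle,\rangle_{\mathcal{V}_{n,n}}=\langle,\rangle_{\mathcal{V}_n}\otimes\langle,\rangle_{V_n}$ to the stalks, both sides decompose as sums of scalars:
\[
-\langle I(\omega_f)\otimes v,c\rangle_\Gamma=-\sum_i\langle I(\omega_f),c_i\rangle'_\Gamma\cdot\langle v,w_i\rangle_{V_n},
\]
\[
\langle F_{\omega_f}(z_A)\otimes v,\mathrm{Res}_{z_A}(\chi)\rangle_{z_A,z_{A_0}}=\sum_i\langle F_{\omega_f}(z_A),\mathrm{Res}_{z_A}(\chi_i)\rangle_{z_A}\cdot\langle v,v_i\rangle_{V_n}.
\]

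Since $v_i=w_i$, it suffices to establish the single-factor identity
\[
-\langle I(\omega_f),c_i\rangle'_\Gamma=\langle F_{\omega_f}(z_A),\mathrm{Res}_{z_A}(\chi_i)\rangle_{z_A}
\]
for each $i$. This is a $p$-adic residue/Coleman formula on $X_\Gamma$: the scalar meromorphic form $\langle F_{\omega_f},\chi_i\rangle_{V_n}$ on $\mathcal{H}_p$ has a simple pole at $z_A$ whose residue is exactly $\langle F_{\omega_f}(z_A),\mathrm{Res}_{z_A}(\chi_i)\rangle_{z_A}$ (by the product rule for residues, using $dF_{\omega_f}=\omega_f$ which is holomorphic). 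On the other hand, a $p$-adic residue theorem on a fundamental domain for $\Gamma$, combined with the identification between second-kind differentials on $\mathcal{H}_p$ and harmonic cocycles on the Bruhat--Tits tree $\mathcal{T}$ (expressed by the map $I$ of \eqref{I} and the pairing $\langle,\rangle'_\Gamma$ of \cite[(75)]{IS}), shows that the same residue computes, up to sign, the global pairing $\langle I(\omega_f),c_i\rangle'_\Gamma$. This is in essence the analogue for $\omega_f$ paired with \emph{meromorphic} $\chi_i$ of \cite[Theorem 10.2]{IS}, which was already used for holomorphic $\beta_i$ in the proof of Lemma \ref{lemma5.2}; the proof should be entirely parallel, or can be cited directly from \cite{IS} and \cite[Lemma 3.3]{Masdeu}.

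The main obstacle is carefully justifying this single-factor formula in our setting: one must track the contribution of the Coleman primitive $F_{\omega_f}$ at $z_A$ and identify it, via Stokes/residue on a fundamental domain and the cocycle relation $\gamma\mapsto\gamma(F_{\omega_f})-F_{\omega_f}$ representing $P(\omega_f)$, with the harmonic cocycle pairing of $I(\omega_f)$ against $c_i=P(\chi_i\,\mathrm{mod}\,\mathrm{exact})$ (with the correct sign coming from the residue convention on oriented annuli). Once this one-variable residue formula is in place, summing over $i$ and refactoring the tensor products yields the claim.
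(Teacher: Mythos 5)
Your proposal matches the paper's proof in both structure and substance: the paper likewise decomposes $c=\sum_j c_j\otimes w_j$, uses $\langle,\rangle_\Gamma=\langle,\rangle'_\Gamma\otimes\langle,\rangle_{V_n}$ and $\langle,\rangle_{\mathcal{V}_{n,n}}=\langle,\rangle_{\mathcal{V}_n}\otimes\langle,\rangle_{V_n}$ to reduce to the single-factor identity, and then simply cites the residue/Coleman-primitive formula of Iovita--Spie\ss\ (their Corollary 10.7) for $\langle I(\omega_f),c_j\rangle'_\Gamma$ in terms of $\langle F_{\omega_f}(z_A),\mathrm{Res}_{z_A}(\chi_j)\rangle$. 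Your extra sketch of how that single-factor formula would be proved via residues on a fundamental domain is not needed (the paper just quotes it), but it is the right mechanism and does not change the argument.
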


\begin{proof}
As in the proof of Lemma \ref{lemma5.2} write $c=\sum_jc_j\otimes w_j$. By definition, 
\[\langle I(\omega_f)\otimes v,c\rangle_\Gamma=\sum_j\langle I(\omega_f),c_j\rangle_\Gamma'\cdot\langle v,w_j\rangle_{V_n}.\]
By \cite[Corollary 10.7]{IS},  
\[\langle I(\omega_f),c_j\rangle_\Gamma'=\langle F_{\omega_f}(z_A),\mathrm{Res}_{z_A}(\chi_j)\rangle_{{V}_n}\]
where in the last pairing we identify $(\mathcal{V}_n)_{z_{A}}$ with $V_n$. The result 
follows now from the definition of the pairing $\langle,\rangle_{\mathcal{V}_{n,n}}$ in 
\eqref{pairing V_n}.
\end{proof}

For a smooth projective variety $X$ defined over $F$, denote 
\[\cup:H^p_\mathrm{dR}(X)\otimes H^q_\mathrm{dR}(X)\longrightarrow H^{p+q}_\mathrm{dR}(X)\] the cup product pairing on the de Rham cohomology of $X$. 
If $d$ is the dimension of $X$, we also denote $\eta_X:H^{2d}(X)\rightarrow F$ the 
trace isomorphism. 

Let $A_z$ be the fiber at $z$ of $\mathcal{A}\rightarrow X_M$.
The projector $\epsilon$ defines a projector $\epsilon_z$ on 
$A_z^m$ and we have (\cite[Theorem 5.8 (iii)]{Besser})
\begin{equation}\label{eq28}(\epsilon_z)_* H^n_{\mathrm{dR}}(A_z^m)\simeq (\mathcal{V}_n)_z.\end{equation}
We also have a canonical map 
\begin{equation}\label{eq29}
(\epsilon_{z_A})_* H^n_{\mathrm{dR}}(A^m)\otimes 
(\epsilon_{z_{A_0}})_* H^n_{\mathrm{dR}}(A^m_0)\longmono
H^n_{\mathrm{dR}}(A^m)\otimes H^n_{\mathrm{dR}}(A^m_0)\longmono
H^{2n}_{\mathrm{dR}}(A^m\times A_0^m)\end{equation} arising from the Kunneth decomposition; 
explicitly, this is the map which takes $\alpha\otimes \beta$ to $p_A^*(\alpha)\cup p_{A_0}^*(\beta)$, where $p_A:A^m\times A_0^m\rightarrow A^m$ 
and $p_{A_0}:A^m\times A_0^m\rightarrow A_0^m$ are the two projections. 
Composing \eqref{eq28} with \eqref{eq29} we obtain a map 
\[\Theta:(\mathcal{V}_n)_{z_A}\otimes(\mathcal{V}_n)_{z_{A_0}}\longmono 
H^{2n}_{\mathrm{dR}}(A^m\times A_0^m).\]

Recall that, given a false isogeny $\varphi:A_0\rightarrow A$, we have pull-back an push-forward maps 
$\varphi^*:H^i_\mathrm{dR}(A)\rightarrow H^i_\mathrm{dR}(A_0)$ and 
$\varphi_*:H^i_\mathrm{dR}(A_0)\rightarrow H^i_\mathrm{dR}(A)$. Applying the projectors 
$\epsilon_{z_{A_0}}$ and $\epsilon_{z_A}$ and using \eqref{eq28} 
we thus obtain maps 
$\varphi^*:(\mathcal{V}_n)_{z_A}\rightarrow (\mathcal{V}_n)_{z_{A_0}}$ and 
$\varphi_*:(\mathcal{V}_n)_{z_{A_0}}\rightarrow (\mathcal{V}_n)_{z_{A}}$. 

\begin{lemma}\label{lemma5.4}
Fix $v_A\otimes v_{A_0}\in (\mathcal{V}_n)_{z_A}\otimes(\mathcal{V}_n)_{z_{A_0}}$ and an isogeny $\varphi:A_0\rightarrow A$. Then 
\[\langle v_A\otimes v_{A_0},\cl(\Delta_\varphi)\rangle_{z_A,z_{A_0}}=
\langle v_A,\varphi_*(v_{A_0})\rangle_{z_{A}}.\]
\end{lemma}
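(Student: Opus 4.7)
The plan is to reduce the pairing computation to the standard correspondence property of the graph of an isogeny, using the explicit description of $\Theta$ and the identification of the stalk pairing with intersection on the abelian variety.

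First, I would spell out $\cl(\Delta_\varphi)$. Since $\Delta_\varphi=\epsilon\Upsilon_\varphi$ with $\Upsilon_\varphi=(\Gamma_\varphi)^m$, and since the cycle class map is equivariant under correspondences, the image of $\cl(\Delta_\varphi)$ in $(\mathcal{V}_n)_{z_A}\otimes(\mathcal{V}_n)_{z_{A_0}}$ coincides, through the embedding $\Theta$, with the projection of $\cl(\Upsilon_\varphi)\in H^{2n}_{\mathrm{dR}}(A^m\times A_0^m)$ onto the K\"unneth summand $(\epsilon_{z_A})_*H^n_{\mathrm{dR}}(A^m)\otimes(\epsilon_{z_{A_0}})_*H^n_{\mathrm{dR}}(A_0^m)$. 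Since the projectors $\epsilon_{z_A}$ and $\epsilon_{z_{A_0}}$ are self-adjoint with respect to Poincar\'e duality, this projection is invisible to the pairing against vectors already lying in the $\epsilon$-isotypic components.

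Second, I would identify the pairing $\langle,\rangle_{z_A,z_{A_0}}$ with the intersection pairing on $A^m\times A_0^m$ via $\Theta$. Since $\langle,\rangle_{\mathcal{V}_{n,n}}=\langle,\rangle_{\mathcal{V}_n}\otimes\langle,\rangle_{V_n}$, and both factors coincide fiberwise with Poincar\'e duality on $A^m$ and $A_0^m$ respectively (after applying $\epsilon_{z_A}$ and $\epsilon_{z_{A_0}}$, and using the isomorphism \eqref{eq28}), we obtain
\[
\langle v_A\otimes v_{A_0},\cl(\Delta_\varphi)\rangle_{z_A,z_{A_0}}
=\eta_{A^m\times A_0^m}\bigl(\Theta(v_A\otimes v_{A_0})\cup \cl(\Upsilon_\varphi)\bigr).
\]

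Third, I would invoke the defining property of the cycle class of a graph. Viewed as a subvariety of $A^m\times A_0^m$ (after reordering the factors of $(A_0\times A)^m$), the cycle $\Upsilon_\varphi$ is the graph of the isogeny $\varphi^m\colon A_0^m\to A^m$ sitting as a correspondence, so
\[
p_{A_0^m,*}\bigl(p_{A^m}^*\alpha\cup \cl(\Upsilon_\varphi)\bigr)=(\varphi^m)^*\alpha
\]
for $\alpha\in H^n_{\mathrm{dR}}(A^m)$. Applied to $\alpha=v_A$ and combined with the projection formula and the definition of $\Theta$, this gives
\[
\eta_{A^m\times A_0^m}\bigl(p_{A^m}^*(v_A)\cup p_{A_0^m}^*(v_{A_0})\cup\cl(\Upsilon_\varphi)\bigr)
=\eta_{A_0^m}\bigl((\varphi^m)^*(v_A)\cup v_{A_0}\bigr).
\]
The standard adjunction $\int_{A_0^m}(\varphi^m)^*\alpha\cup\beta=\int_{A^m}\alpha\cup(\varphi^m)_*\beta$, valid because $\varphi^m$ is a finite flat morphism, yields
\[
\eta_{A_0^m}\bigl((\varphi^m)^*(v_A)\cup v_{A_0}\bigr)
=\eta_{A^m}\bigl(v_A\cup(\varphi^m)_*(v_{A_0})\bigr)
=\langle v_A,\varphi_*(v_{A_0})\rangle_{z_A},
\]
where the last equality is the definition of $\langle,\rangle_{z_A}$ and of $\varphi_*$ on the stalks as recorded just before the lemma.

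The main technical obstacle will be pinning down the conventions so that everything matches on the nose: specifically, checking that the abstract pairing $\langle,\rangle_{z_A,z_{A_0}}$ obtained by restriction from $\langle,\rangle_{\mathcal{V}_{n,n}}$ really agrees up to sign with the geometric intersection pairing on the fibral abelian variety, and tracking the projector $\epsilon$ through the K\"unneth decomposition so that its presence in $\Delta_\varphi=\epsilon\Upsilon_\varphi$ produces no extra factor (which follows from the idempotence and self-adjointness of $\epsilon$ together with the fact that $v_A\otimes v_{A_0}$ already sits in the $\epsilon$-isotypic summand). Once these compatibilities are stated, the four displayed equalities above combine to give the lemma.
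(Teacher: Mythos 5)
Your proposal is correct and follows essentially the same route as the paper's proof: identify the stalk pairing with the cup product on the fiber $A^m\times A_0^m$ via $\Theta$, use the graph structure of $\Upsilon_\varphi$ (the paper writes $\cl(\Upsilon_\varphi)=\varrho_*(1_{A_0^m})$ for $\varrho=(\varphi^m,\mathrm{id}^m)$ and applies the projection formula, which is exactly the content of your ``defining property of the cycle class of a graph''), and conclude by the adjunction between $\varphi^*$ and $\varphi_*$ under the trace. Your extra care with the projector $\epsilon$ and the self-adjointness remark is a reasonable supplement to what the paper leaves implicit, but it does not change the argument.
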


\begin{proof}
For each $z$, the pairing $\langle,\rangle_{z}$ 
on $(\mathcal{V}_n)_{z}$ is induced
by the pairing on $V_n$ and the isomorphism $(\mathcal{V}_n)_{z}\simeq V_n$ corresponds under the above map 
to the cup product pairing $\cup$ on the de Rham cohomology of $A_z^m$ (see \cite[Remark 5.12]{IS}). 
Let 
$\varrho=(\varphi^m,\mathrm{id}^m):A_0^m\rightarrow A^m\times A_0^m$.   Then 
we have $\varrho(A_0^m)=\Upsilon_\varphi$ and 
$\varrho_*(1_{A_0^m})=\cl_{A^m\times A_0^m}(\Upsilon_\varphi)$, 
where $1_{A_0^m}\in H^0_\mathrm{dR}(A_0^m)$ is the identity element. 
Thus
\[\begin{split}
\langle v_A\otimes v_{A_0},\cl(\Delta_\varphi)\rangle_{z_A,z_{A_0}}&=
\eta_{A^m\times A_0^m}\left(
\Theta(v_A\otimes v_{A_0})\cup \left(\cl_{A^m\times A_0^m}(\Upsilon_\varphi)\right)\right)\\
&=\eta_{A^m\times A_0^m}\left(\Theta(v_A\otimes v_{A_0})\cup\varrho_*(1_{A_0^m})\right)\\
&=
\eta_{A^m\times A_0^m}\left(\left(p_A^*(v_A)\cup p_{A_0}^*(v_{A_0})\right)\cup\varrho_*(1_{A_0^m})\right).
\end{split}\] 
It turns out that 
\[\begin{split}
\eta_{(A\times A_0)^m}\left(\left(p_A^*(v_A)\cup p_{A_0}^*(v_{A_0})\right)\cup\varrho_*(1_{A_0^m})\right)&= \eta_{A_0^m}(\varrho^*(p_A^*(v_A)\cup p_{A_0}^*(v_{A_0}))\cup 1_{A_0^m})\\
&=\eta_{A_0^m}(\varphi^*(v_A)\cup v_{A_0})
\end{split}
\]
Therefore  
\[
\langle v_A\otimes v_{A_0},\cl(\Delta_\varphi)\rangle_{z_A,z_{A_0}}=
\eta_{A_0^m}\left(\varphi^*(v_A)\cup v_{A_0}\right)=
\eta_{A^m}\left(v_A\cup\varphi_*(v_{A_0})\right).
\]
Now the term on the right of the last displayed equation coincides with $\langle v_A,\varphi_*(v_{A_0})\rangle_{z_{A}}$, and the result follows.
\end{proof}
%

\begin{theorem}\label{Theorem1}
Let $\varphi:A_0\rightarrow A$ and $v\in(\mathcal{V}_n)_{z_{A_0}}$.  
Then 
\[\AJ_p(\Delta_\varphi)(f\otimes v)=\langle F_{\omega_f}(z_A),\varphi_*(v)\rangle_{z_{A}}.\]
\end{theorem}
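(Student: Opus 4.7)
The proof is essentially an assembly of the lemmas built up throughout \S\ref{sec5.2}, organized as a computation that starts from the defining formula \eqref{first equality AJ} and ends at the desired expression in terms of the Coleman primitive.

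The plan is to chain the three equalities provided by Lemmas \ref{lemma5.2}, \ref{lemma5.3} and \ref{lemma5.4}. By \eqref{first equality AJ}, we have
\[\AJ_p(\Delta_\varphi)(f\otimes v) = \langle \omega_f\otimes v,\beta\rangle_{\mathcal{V}_{n,n}},\]
where $\beta = \iota(c)$ with $c\in H^1(\Gamma,V_{n,n})$ chosen so that $j_*(\beta) \equiv \alpha \mod F^{n+1}$, and $\alpha\in H^1_\mathrm{dR}(U_{z_A},\mathcal{V}_{n,n})_{n+1}$ satisfies $\mathrm{Res}_{z_A}(\alpha)=\mathrm{cl}_A(\Delta_\varphi)$ and $N(\alpha)=0$. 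Lemma \ref{lemma5.2} rewrites this as $-\langle I(\omega_f)\otimes v, c\rangle_\Gamma$, and Lemma \ref{lemma5.3} then rewrites this as $\langle F_{\omega_f}(z_A)\otimes v, \mathrm{Res}_{z_A}(\chi)\rangle_{z_A,z_{A_0}}$, where $\chi$ is the meromorphic differential representing $\alpha - j_*(\beta)$.

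The key observation bridging the residue-theoretic expression to the cycle class is that $j_*(\beta)$, coming from $H^1_\mathrm{dR}(X_\Gamma,\mathcal{V}_{n,n})$, has zero residue at $z_A$ by exactness of \eqref{5.13IS}. Hence
\[\mathrm{Res}_{z_A}(\chi) = \mathrm{Res}_{z_A}(\alpha) - \mathrm{Res}_{z_A}(j_*(\beta)) = \mathrm{Res}_{z_A}(\alpha) = \mathrm{cl}_A(\Delta_\varphi)\]
(where one identifies the residue of $\chi$ with that of any representative $\alpha-j_*(\beta)$ of its class). Substituting this into the output of Lemma \ref{lemma5.3} gives
\[\AJ_p(\Delta_\varphi)(f\otimes v) = \langle F_{\omega_f}(z_A)\otimes v,\mathrm{cl}_A(\Delta_\varphi)\rangle_{z_A,z_{A_0}}.\]
Finally, Lemma \ref{lemma5.4}, applied with $v_A = F_{\omega_f}(z_A)$ and $v_{A_0}=v$, converts the right-hand side into $\langle F_{\omega_f}(z_A),\varphi_*(v)\rangle_{z_A}$, which is the desired formula.

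No step here is an obstacle in the sense of requiring new ideas — all the technical work (the explicit description of the Abel-Jacobi map via the Coleman primitive, the residue computation, and the pairing translation via $\varphi_*$) has been isolated into Lemmas \ref{lemma5.2}--\ref{lemma5.4}. The only point one must be careful about is ensuring the residue identity $\mathrm{Res}_{z_A}(\chi)=\mathrm{cl}_A(\Delta_\varphi)$ is a well-defined equality in $(\mathcal{V}_n)_{z_A}\otimes(\mathcal{V}_n)_{z_{A_0}}$, which follows from the defining property of $\alpha$ together with the fact that $j_*(\beta)$ extends holomorphically across $z_A$.
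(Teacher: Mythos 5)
Your proof is correct and follows exactly the same route as the paper's: chain \eqref{first equality AJ} with Lemmas \ref{lemma5.2} and \ref{lemma5.3}, use $\mathrm{Res}_{z_A}(j_*(\beta))=0$ to identify $\mathrm{Res}_{z_A}(\chi)$ with $\cl(\Delta_\varphi)$, and conclude with Lemma \ref{lemma5.4}. No discrepancies to report.
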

\begin{proof} 
Recall that $\mathrm{Res}_{z_A}(\chi)=\mathrm{Res}_{z_A}(\alpha)=\cl(\Delta_\varphi)$, where 
the first equality follows because $\mathrm{Res}_{z_A}\left(j_*(\beta)\right)=0$. 
Combining this with \eqref{first equality AJ}, Lemma \ref{lemma5.2} and Lemma \ref{lemma5.3} we obtain \[\AJ_p(\Delta_\varphi)(f\otimes v)=\langle F_{\omega_f}(z_A)\otimes v,\cl(\Delta_\varphi)\rangle_{z_A,z_{A_0}}.\]
The result follows then from Lemma \ref{lemma5.4}.\end{proof}

\begin{corollary}\label{coro5.6}
Let $\varphi:A_0\rightarrow A$, $\varphi^\vee:A\rightarrow A_0$ the 
dual isogeny, and $v\in(\mathcal{V}_n)_{z_{A}}$.  
Denote $\deg(\varphi)$ the degree of $\varphi$. 
Then 
\[\AJ_p(\Delta_\varphi)(f\otimes \varphi^\vee_*(v))=\deg(\varphi)\cdot\langle F_{\omega_f}(z_A),v\rangle_{z_{A}}.\]
\end{corollary}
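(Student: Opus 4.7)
The proof should be an immediate consequence of Theorem \ref{Theorem1}. The plan is as follows. Apply Theorem \ref{Theorem1} with $v$ replaced by $\varphi^\vee_*(v)\in(\mathcal{V}_n)_{z_{A_0}}$; this gives directly
\[
\AJ_p(\Delta_\varphi)\bigl(f\otimes \varphi^\vee_*(v)\bigr)
 = \bigl\langle F_{\omega_f}(z_A),\, \varphi_*\bigl(\varphi^\vee_*(v)\bigr)\bigr\rangle_{z_A},
\]
so the entire content of the corollary is to identify the endomorphism $\varphi_*\circ\varphi^\vee_*$ of the stalk $(\mathcal{V}_n)_{z_A}$ with multiplication by $\deg(\varphi)$.

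For this identification, I would use the functoriality of pushforward on de Rham cohomology to rewrite $\varphi_*\circ\varphi^\vee_* = (\varphi\circ\varphi^\vee)_*$, and then invoke the theory of dual isogenies of false elliptic curves: the composition $\varphi\circ\varphi^\vee$ equals the multiplication-by-$\deg(\varphi)$ endomorphism of $A$ in the quaternionic endomorphism ring (this is the standard defining relation for the false dual, see e.g.\ \cite{BC}). Equivalently, one can use the identification $\varphi^\vee_*=\varphi^*$ on de Rham cohomology coming from the auto-duality of the abelian surfaces $A$ and $A_0$, combined with the projection formula $\varphi_*\circ\varphi^*=\deg(\varphi)\cdot\mathrm{id}$ for the finite flat morphism $\varphi$; either route leads to the same formula.

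Substituting $\varphi_*(\varphi^\vee_*(v))=\deg(\varphi)\cdot v$ into the displayed identity and using the bilinearity of $\langle\,,\,\rangle_{z_A}$ produces the stated equality. The main point that has to be checked carefully, and which I would view as the only non-routine step, is that this scalar-multiplication identity is compatible with the projector $\epsilon_{z_A}$ cutting out $(\mathcal{V}_n)_{z_A}\subseteq H^n_{\mathrm{dR}}(A^m)$ and with the identification \eqref{eq28}; once one invokes the explicit description of $\epsilon_{\mathcal{A}}$ from \cite[Appendix 10.1]{IS} and the behaviour of quaternionic endomorphisms on the relevant cohomology piece, this is a purely formal verification and no further analytic input is needed.
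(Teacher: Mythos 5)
Your proposal is correct and follows essentially the same route as the paper: the paper also applies Theorem \ref{Theorem1} to $\varphi^\vee_*(v)$ and concludes from the identity $\deg(\varphi)_*=(\varphi\circ\varphi^\vee)_*=\varphi_*\circ\varphi^\vee_*$. The additional compatibility checks you flag (projection formula, behaviour under the projector $\epsilon_{z_A}$) are left implicit in the paper's two-line proof, so your write-up is if anything slightly more careful.
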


\begin{proof} Let $\deg(\varphi)$ denote multiplication 
by $\deg(\varphi)$ map on $A$ and $A_0$. 
The result follows from Theorem \ref{Theorem1} observing that 
$\deg(\varphi)_*=(\varphi\circ\varphi^\vee)_*=\varphi_*\circ\varphi_*^\vee$. 
\end{proof}

\section{Anticyclotomic $p$-adic $L$-functions}
\label{section 6}
This section contains the main result of this paper, 
in which we connect our generalised Heegner cycles 
to certain semidefinite integrals and anticyclotomic 
$p$-adic $L$-functions extensively studied in the literature, especially in \cite{BD-Heegner}, \cite{BD-CD}, \cite{BD}, \cite{BDIS}, \cite{IS}, \cite{Sev}. The setting is as before: $N=pN^+N^-$ is a factorisation of the integer $N\geq 1$ into coprime integers $p,N^+,N^-$ with $p\nmid N^+N^-$ a prime number, and $N^-$ be a square-free product of an odd number of factors; $K$ is a quadratic imaginary field such that all primes dividing $N^+$ (respectively, $pN^-$) are split (respectively, inert) in $K$. 
We also fix an integer $k_0\geq 4$, and a modular form $f$ of level $\Gamma_0(N)$ and weight $k_0$. We put $n_0=k_0-2$ and $m_0=n_0/2$. 

\subsection{Measure valued modular forms} 
We begin by recalling some results from \cite{BD} and \cite{Sev}, to which the reader is referred to for details. Let $\mathcal{D}(\Z_p^\times)$ the $\Q_p$-algebra of locally analytic distributions on $\Z_p^\times$. 
For each $\Z_p$-lattice $L\subseteq\Q_{p}^2$, denote $L'$ the subset of $L$ consisting of primitive vectors (if $L=\Z_p v_1\oplus \Z_p v_2$, then $L'$ consists of those 
$v=av_1+bv_2$ such that at least one of $a$ and $b$ is not divisible by $p$). 
For each lattice $L$, denote by $\mathcal{D}(L')$ the $\Q_p$-vector space of locally analytic distributions on $L'$, \emph{i.e.} $\mathcal{D}(L')=\Hom_{\Q_p\text{-}\mathrm{cont}}(\mathcal{A}(L'),\Q_p)$, where $\mathcal{A}(L')$ is the $\Q_p$-vector space of $\Q_p$-valued locally analytic functions on $L'$. 
Since $L'$ is $\Z_p^\times$-stable, there is a natural $\mathcal{D}(\Z_p^\times)$-module structure on $\mathcal{D}(L')$, defined by the formula
$$\int_{L'} F(x,y)d(r\mu)(x,y):=\int_{\Z_p^\times}\left(\int_{L'}F(tx,ty)d\mu(x,y)\right)dr(t).$$
Let $A(U)$ be the $\Q_p$-affinoid algebra of an open affinoid disk $U\subset\mathcal{W}$, where 
\[\mathcal{W}:=\Hom_\mathrm{cont}(\Z_p^\times,\Q_p^\times).\]  
We view $\Z\subseteq\mathcal{W}$ via the map which takes $k$ to the homomorphism 
$x\mapsto x^{k-2}$. 
The $\Q_p$-affinoid algebra $A(U)$ has a $\mathcal{D}(\Z_p^\times)$-module structure given by the map 
$\mathcal{D}(\Z_p^\times)\rightarrow A(U)$ defined by 
$r\mapsto \left[\kappa\mapsto\int_{\Z_p^\times}\kappa(t)dr(t)\right].$
Let \[\mathcal{D}(L',U):=A(U)\hat{\otimes}_{\mathcal{D}(\Z_p^\times)}\mathcal{D}(L').\] 

Let $B$ be the definite quaternion algebra over $\Q$ with discriminant $N^-$, and let $R$ be a fixed Eichler $\Z[1/p]$-order of level $N^+$ in $B$. Fix an Eichler $\Z$-order $\underline{R}$ of $B$ of level $N^+$ in such a way that $\underline{R}[1/p]=R$, and let $\cO_B$ be a maximal $\Z$-order of $B$ containing $\underline{R}$. We will write $\hat{\underline{R}}$ for the adelisation $\underline{R}\otimes\hat{\Z}$ of $\underline{R}$. For each prime number $\ell\nmid N^-$ fix a $\Q_\ell$-algebra isomorphisms $\iota_\ell\colon B\otimes\Q_\ell\overset\sim\rightarrow\M_2(\Q_l)$ sending $\cO_B\otimes \Z_\ell$ isomorphically onto $\M_2(\Z_\ell)$. Write $\hat{\Q}$ for the ring of finite ad\'eles of $\Q$ and $\hat{B}$ for $B\otimes\hat{\Q}$. Define the level structures $\Sigma=\Sigma(N^+p,N^-)=\prod_\ell\Sigma_\ell$ for 
$$\Sigma_\ell=
\begin{cases}
(\cO_B\otimes\Z_\ell)^\times & \text{if}\ \ell\nmid N^+p\\
\iota_\ell^{-1}(\Gamma_0(N^+p\Z_\ell)) & \text{if}\ \ell\mid N^+p
\end{cases}$$
where $\Gamma_0(N^+p\Z_\ell)$ denotes the subgroup of $\GL_2(\Z_\ell)$ 
consisting of matrices which are upper triangular modulo $N^+p$. Write $\Sigma_\infty$ to denote the open compact subroup obtained from the group $\Sigma$ by replacing the local condition at $p$ with the local condition $\iota_p(\Sigma_{\infty,p})=\GL_2(\Z_p)$. 
Let $S$ be any commutative ring, and $A$ be any $S$-module with an $S$-linear left action of the semigroup $\M_2(\Z_p)$ of matrices with entries in $\Z_p$ and non-zero determinant. We define the $S$-module $S(\Sigma,A)$ as the space of $A$-valued automorphic forms on $B^\times$ of level $\Sigma$, \emph{i.e.}
$$S(\Sigma,A)=\{\phi\colon \hat{B}^\times\rightarrow A\ :\ \phi(gb\sigma)=\iota_p(\sigma_p^{-1})\phi(b)\},$$
where $g\in B^\times$ (embedded diagonally in $\hat{B}^\times$), $b\in \hat{B}^\times$ and $\sigma\in \Sigma$. Observe that, by the strong approximation theorem for $B$, $\hat{B}^\times=B^\times B_p^\times\Sigma$ and a modular form $\phi$ in $S(\Sigma,A)$ can be viewed as a function on $R^\times\backslash B_p^\times/\iota_p^{-1}(\Gamma_0(p\Z_p))$ or, equivalently, as a function on $\GL_2(\Q_p)$ satisfying $\phi(\gamma b\sigma)=\sigma^{-1}\phi(b)$, for all $\gamma\in \iota_p(R^\times)$, $b\in\GL_2(\Q_p)$ and $\sigma\in\Gamma_0(p\Z_p)$.

For any integer $n\geq 0$, we still use the symbol $\mathcal{P}_n$ for  the $\Q_p$-vector space of homogeneous polynomials in two variables of degree $n$, and the same for the dual space $V_n$. If $k=n-2$, the space $S(\Sigma,V_n)$ is referred to as the space of weight $k$ automorphic forms on $B$ of level $\Sigma$, and it is denoted by $S_k(\Sigma)$. Fix $U\subseteq\mathcal{W}$ a neighborhood of $k_0$. Set $L_*=\Z_p^2$. 
For every integer $k\geq 2$ in $U$, there exists a specialization map
$$\rho_k\colon S\left(\Sigma_\infty,\mathcal{D}(L_*',U)\right)\longrightarrow S_k(\Sigma)$$
defined by
$$(\rho_k(\Phi)(g))(P):=\int_{\Z_p^\times\times p\Z_p}P(x,y)d\Phi(g),$$
for all $g\in\GL_2(\Q_p)$ and $P\in\mathcal{P}_n$, where $n=k-2$. 

Let $\varphi_{f}\in S_{k_0}(\Sigma(N^+p,N^-))$ be the modular form 
corresponding to $f$ via the Jacquet-Langlands correspondence, normalised as in \cite[\S3.2]{Sev}. By \cite[Theorem 3.7]{Sev} (see also \cite{LV}) there exists a connected neighborhood $U\subseteq\mathcal{W}$ of $k_0$ 
and 
\begin{equation}\label{Phi}
\Phi\in S\left(\Sigma_\infty,\mathcal{D}(L_*',U)\right)\end{equation} such that
$\rho_{k_0}(\Phi)=\varphi_f.$

\subsection{Semidefinite integrals and generalised Heegner cycles}
Choose the branch of the $p$-adic logarithm $\log_f$ as in \cite[\S5.2]{Sev}. 
Recall the element $\Phi$ in \eqref{Phi}. 
Out of $\Phi$, one constructs as explained in \cite[Proposition 3.5]{Sev}, a collection of measure $\{\mu_{L}\}_L$ with $\mu_L\in\mathcal{D}(L',U)$ indexed by lattices $L$ of $\Q_{p}^2$. 


For the next definition of \emph{semidefinite integral}, which  
can be found in \cite[Section 5.2]{Sev}, we use the following notation: for any point $z\in\HH_p(\Q_{p^2})$ whose reduction to the special fiber is non-singular, we denote 
$L_z$ the lattice associated 
with the reduction of $z$ and $|L_z|$ its $p$-adic size; see 
\cite[page 115]{Sev}, to which the reader is referred to for details. 

\begin{definition} The \emph{semidefinite integral} is the 
function 
\begin{equation}\label{semidefinite}
(z,Q)\longmapsto{\int^z Q\omega_f}:=\frac{1}{|L_{z}|^{m_0}}\frac{d}{dk}\left(\int_{L_z'}Q(x,y)\langle x-z y\rangle^{k-k_0}d\mu_{L_z}(x,y)\right)_{\vert_{k=k_0}}\end{equation}
defined for $Q\in \mathcal{P}_{n_0}$ and $z\in\HH_p(\Q_{p^2})$ whose reduction to the special fiber is non-singular. \end{definition}

We now connect semidefinite integrals and generalised Heegner cycles. 
For each $Q\in \mathcal{P}_{n_0}$, denote $Q^\vee$ the element in $V_{n_0}$ defined by 
$Q^\vee(P)=\langle Q,P\rangle_{\mathcal{P}_{n_0}}$ for $P\in\mathcal{P}_{n_0}$. For a fixed $z\in\mathcal{H}_p(\Q_p^\mathrm{unr})$ define the following element of $V_{n_0}$:
$$Q\longmapsto\langle F_{\omega_f}(z),Q^\vee\rangle_{V_{n_0}}$$
where we identify as above $(\mathcal{V}_{n_0})_{z}$ with $V_{n_0}$; 
recall that $F_{\omega_f}$ is the Coleman primitive of $\omega_f$.  

\begin{lemma}\label{lemma7.3}
One has
\begin{enumerate}
\item $\langle F_{\omega_f}(\gamma(z)),Q^\vee\rangle_{V_{n_0}}=\langle F_{\omega_f}(z),(Q\cdot\gamma)^\vee\rangle_{V_{n_0}}$, for every $\gamma \in \Gamma$;
\item $\langle F_{\omega_f}(z_2),Q^\vee\rangle_{V_{n_0}}-\langle F_{\omega_f}(z_1),Q^\vee\rangle_{V_{n_0}}=\int_{z_1}^{z_2}f(z)Q(z)dz$.
\end{enumerate}
\end{lemma}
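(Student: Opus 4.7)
The plan is to reduce both parts to intrinsic properties of $F_{\omega_f}$ and the algebraic $V_{n_0}$--pairing, using throughout the key reformulation
\[\langle R,Q^\vee\rangle_{V_{n_0}}=R(Q),\qquad R\in V_{n_0},\ Q\in\mathcal{P}_{n_0},\]
which follows from the fact that $Q\mapsto Q^\vee$ is precisely the isomorphism $\mathcal{P}_{n_0}\to V_{n_0}$ induced by $\langle,\rangle_{\mathcal{P}_{n_0}}$. Dualizing gives $\langle Q_1^\vee,Q_2^\vee\rangle_{V_{n_0}}=\langle Q_1,Q_2\rangle_{\mathcal{P}_{n_0}}$, and writing a general $R\in V_{n_0}$ as $R=Q_R^\vee$ yields the asserted formula. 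Because $\gamma\in\Gamma$ has determinant one, the $\GL_2$--equivariance of these pairings (a priori valued in $\det^{\otimes n_0}$) collapses to genuine $\Gamma$--invariance.

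For (1), since $F_{\omega_f}\in H^0(X_\Gamma,\mathcal{V}_{n_0})$ is $\Gamma$--equivariant as a $V_{n_0}$--valued function on $\mathcal{H}_p$, we have $F_{\omega_f}(\gamma z)=\gamma\cdot F_{\omega_f}(z)$. Then
\[\langle F_{\omega_f}(\gamma z),Q^\vee\rangle_{V_{n_0}}=\langle\gamma F_{\omega_f}(z),Q^\vee\rangle_{V_{n_0}}=\langle F_{\omega_f}(z),\gamma^{-1}Q^\vee\rangle_{V_{n_0}},\]
and the identity $\gamma^{-1}Q^\vee=(Q\gamma)^\vee$ is verified by evaluating at arbitrary $P\in\mathcal{P}_{n_0}$:
\[(\gamma^{-1}Q^\vee)(P)=Q^\vee(P\gamma^{-1})=\langle Q,P\gamma^{-1}\rangle_{\mathcal{P}_{n_0}}=\langle Q\gamma,P\rangle_{\mathcal{P}_{n_0}}=(Q\gamma)^\vee(P),\]
the third equality being $\SL_2$--invariance of $\langle,\rangle_{\mathcal{P}_{n_0}}$.

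For (2), expand the primitive in the natural basis of $(\mathcal{V}_{n_0})_z\simeq V_{n_0}$, writing $F_{\omega_f}(z)=\sum_{j=0}^{n_0}c_j(z)\,\partial_z^j$. Since $\partial_z^j(P)=P^{(j)}(z)$, the flat connection on $\mathcal{V}_{n_0}$ over $\mathcal{H}_p$ acts as $\tfrac{d}{dz}\partial_z^j=\partial_z^{j+1}$, so the defining identity $dF_{\omega_f}=\omega_f=f(z)\partial_z^0\,dz$ unpacks into the recursion $c_0'=f$ and $c_j'=-c_{j-1}$ for $j\geq 1$ (with $c_{n_0+1}\equiv 0$). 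Applied to a fixed $Q\in\mathcal{P}_{n_0}$,
\[F_{\omega_f}(z)(Q)=\sum_{j=0}^{n_0}c_j(z)Q^{(j)}(z),\]
and differentiating in $z$ yields a telescoping sum that collapses to $f(z)Q(z)$. Coleman's fundamental theorem of calculus on $\mathcal{H}_p$ then gives $F_{\omega_f}(z_2)(Q)-F_{\omega_f}(z_1)(Q)=\int_{z_1}^{z_2}f(z)Q(z)\,dz$, which through the reformulation is exactly (2).

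The hard part is justifying the $\Gamma$--equivariance $F_{\omega_f}(\gamma z)=\gamma F_{\omega_f}(z)$ used in (1): \emph{a priori} the Coleman primitive on $\mathcal{H}_p$ is only determined up to a constant in $V_{n_0}$, and the cocycle $\gamma\mapsto\gamma F_{\omega_f}-F_{\omega_f}$ is generically nonzero---it is precisely the class of $\omega_f$ under the map $P$ of \eqref{P}. One must therefore use the distinguished primitive constructed in \cite[\S 2.3]{deshalit}, which is what the labeling $F_{\omega_f}\in H^0(X_\Gamma,\mathcal{V}_{n_0})$ in the paper encodes; with that choice in hand, both parts reduce to the formal manipulations above.
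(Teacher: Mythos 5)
Your proof is correct and follows essentially the same route as the paper's: part (1) is the same formal manipulation using $\SL_2$-equivariance of the pairings and the identity $\gamma^{-1}Q^\vee=(Q\cdot\gamma)^\vee$, and part (2) is the paper's one-line computation $d\langle F_{\omega_f}(z),Q^\vee\rangle=f(z)\langle\partial^0,Q^\vee\rangle\,dz=f(z)Q(z)\,dz$ carried out explicitly in the moving frame $\{\partial_z^j\}$. You are also right that the only real content of (1) is the invariance $F_{\omega_f}(\gamma z)=\gamma F_{\omega_f}(z)$, which the paper justifies only with ``since $f$ has level $\Gamma$''; note, though, that what actually pins down a $\Gamma$-invariant primitive is not so much the construction of \cite{deshalit} as the $f$-adapted branch $\log_f$ of the $p$-adic logarithm fixed at the start of \S 6.2 following \cite[\S 5.2]{Sev} (equivalently, the existence and uniqueness statement of \cite[Lemma 5.6]{Sev} invoked in the proof of Theorem \ref{theorem semidefinite}), since for a generic branch the period cocycle $\gamma\mapsto\gamma F_{\omega_f}-F_{\omega_f}$ representing $P(\omega_f)$ does not vanish.
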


\begin{proof} The second statement is a consequence of \eqref{omega_f} and the definition of Coleman primitive, since 
$$d\langle F_{\omega_f}(z),Q^\vee\rangle=f(z)\langle\partial^0,Q^\vee\rangle dz=f(z)Q(z)dz.$$ 
We need to prove (1). 
Since $f$ has level $\Gamma$, its Coleman primitive $F_{\omega_f}$ is $\Gamma$-invariant, i.e. $\gamma F_{\omega_f}=F_{\omega_f}$ for every $\gamma\in\Gamma$, where $(\gamma F_{\omega_f})(z):=\gamma F_{\omega_f}(\gamma^{-1}z)$ (note that the action on the right hand side is the one on $V_n$). This means that $F_{\omega_f}(\gamma(z))=\gamma F_{\omega_f}(z)$ for every $\gamma\in\Gamma$. 
Recall that $\langle Av_1,v_2\rangle_{V_n}=\langle v_1,\bar{A}v_2\rangle_{V_n}$; thus, for every $\gamma\in\Gamma$ we have 
\[\langle \gamma F_{\omega_f}(z),Q^\vee\rangle_{V_{n_0}}=
\langle  F_{\omega_f}(z),\gamma^{-1}Q^\vee\rangle_{V_{n_0}}=
\langle  F_{\omega_f}(z),(Q\cdot \gamma)^\vee\rangle_{V_{n_0}}\] which proves (1). 
\end{proof}

\begin{theorem}\label{theorem semidefinite} Let $\varphi:A_0\rightarrow A$ be an isogeny and $Q^\vee=v$ for some $v\in (\mathcal{V}_{n_0})_{z_A}$. Then 
\[\deg(\varphi)\cdot\int^{z_A} Q\omega_f=\AJ_p(\Delta_\varphi)(f\otimes \varphi^\vee_*(v)).\]
\end{theorem}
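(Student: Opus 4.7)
The proof splits naturally into two parts: an application of Corollary \ref{coro5.6}, and an identification of the semidefinite integral with a value of the Coleman primitive.

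First, apply Corollary \ref{coro5.6} with $v = Q^\vee \in (\mathcal{V}_{n_0})_{z_A}$, using the canonical identification $(\mathcal{V}_{n_0})_{z_A}\simeq V_{n_0}$ at the fixed point $z_A$. This gives
\[
\AJ_p(\Delta_\varphi)\bigl(f\otimes\varphi^\vee_*(v)\bigr) \;=\; \deg(\varphi)\cdot\langle F_{\omega_f}(z_A),Q^\vee\rangle_{z_A,z_A} \;=\; \deg(\varphi)\cdot\langle F_{\omega_f}(z_A),Q^\vee\rangle_{V_{n_0}}.
\]
Hence the theorem reduces to the scalar identity
\[
\int^{z_A} Q\,\omega_f \;=\; \langle F_{\omega_f}(z_A),Q^\vee\rangle_{V_{n_0}}.
\]

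To prove this, I will view both sides as rigid analytic functions of the variable $z \in \mathcal{H}_p$, with $Q\in\mathcal{P}_{n_0}$ fixed, and show that they satisfy the same differential equation. For the right-hand side, Lemma \ref{lemma7.3}(2) yields directly
\[
d\langle F_{\omega_f}(z),Q^\vee\rangle_{V_{n_0}} \;=\; f(z)\,Q(z)\,dz.
\]
For the left-hand side, apply $d/dz$ to the defining formula \eqref{semidefinite} and commute it with $d/dk$; the factor $\langle x-zy\rangle^{k-k_0}$ differentiates to $-y(k-k_0)\langle x-zy\rangle^{k-k_0-1}$, so the $k$-derivative evaluated at $k=k_0$ collapses the expression to an ordinary integral against $\mu_{L_z}$. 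Using that $\rho_{k_0}(\Phi)=\varphi_f$ together with the explicit normalisation of the measure family $\{\mu_L\}_L$ attached to $\Phi$ (as in \cite[Prop.~3.5 and \S5.2]{Sev}), this integral is precisely $f(z)Q(z)$ after the correction factor $|L_z|^{-m_0}$ is taken into account. Thus $d\!\int^z Q\omega_f = f(z)Q(z)\,dz$ as well.

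Having matched differentials, the difference of the two sides is a locally constant, hence constant, $\C_p$-valued function of $z$ on the connected rigid space $\mathcal{H}_p$ (the constant may still depend on $Q$). To pin it down, I will exploit that both sides transform identically under the pair $(z,Q)\mapsto(\gamma z, Q\!\cdot\!\gamma)$ for $\gamma\in\Gamma$: for the Coleman-primitive side this is Lemma \ref{lemma7.3}(1), and for the semidefinite side this follows from the $\Gamma$-equivariance of the family $\{\mu_L\}_L$ (the measure $\mu_{\gamma L}$ is the $\gamma$-translate of $\mu_L$, and $L_{\gamma z} = \gamma L_z$). Since the constant $c(Q)$ must therefore satisfy $c(Q) = c(Q\cdot\gamma)$ for every $\gamma\in\Gamma$, and the $\Gamma$-action on $\mathcal{P}_{n_0}$ is irreducible enough to force such an invariant linear functional $Q\mapsto c(Q)$ to be zero, one concludes $c(Q)=0$.

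The main obstacle is the derivative computation for the semidefinite integral in the second paragraph: justifying the interchange of $d/dz$, $d/dk$, and the integral against a locally analytic distribution, and identifying the resulting integral with the value $f(z)$ through the specialization $\rho_{k_0}(\Phi)=\varphi_f$ and the Jacquet--Langlands correspondence. This is essentially the content of \cite[\S5.2]{Sev}, and the cleanest route is probably to invoke the relevant comparison from \emph{loc.~cit.} rather than redo the calculation; the rest of the argument is then a short formal consequence of Corollary \ref{coro5.6}.
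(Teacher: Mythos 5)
Your proposal is correct and its skeleton matches the paper's: both reduce the theorem, via Corollary \ref{coro5.6}, to the single identity $\int^{z_A}Q\,\omega_f=\langle F_{\omega_f}(z_A),Q^\vee\rangle_{V_{n_0}}$, and both lean on Lemma \ref{lemma7.3}. Where you diverge is in how that identity is established. The paper simply invokes the uniqueness statement of \cite[Lemma 5.6]{Sev} --- there is a \emph{unique} function $F(z,Q)$ satisfying $F(\gamma(z),Q)=F(z,Q\cdot\gamma)$ and $F(z_1,Q)-F(z_2,Q)=\int_{z_2}^{z_1}f(z)Q(z)\,dz$ --- and checks via Lemma \ref{lemma7.3} that $\langle F_{\omega_f}(z),Q^\vee\rangle$ has both properties; since the semidefinite integral is that unique function by \emph{loc.~cit.}, equality follows. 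You instead unpack the proof of that uniqueness: match differentials, observe the difference $c(Q)$ is independent of $z$, and kill it using $\Gamma$-equivariance together with the absence of $\Gamma$-invariant functionals on $\mathcal{P}_{n_0}$ (valid here since $\Gamma$ is Zariski dense in $\SL_2$ and $n_0\geq 2$). Two points deserve care in your version. First, ``locally constant, hence constant on the connected rigid space $\mathcal{H}_p$'' is too glib: neither side is rigid analytic ($F_{\omega_f}$ is a Coleman primitive and the semidefinite integral involves $\log_f$), and a naively locally constant function on the totally disconnected set $\mathcal{H}_p(\C_p)$ need not be constant; you must work inside Coleman's class of locally analytic primitives, where vanishing differential does force a global constant --- which is precisely what the integrated form of property (2) in \cite[Lemma 5.6]{Sev} encodes. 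Second, the computation $d\!\int^{z}Q\,\omega_f=f(z)Q(z)\,dz$ (interchanging $d/dz$, $d/dk$ and the distribution, plus the Poisson-inversion identification of the resulting integral with $f(z)Q(z)$) is the genuine analytic content, and you rightly propose to cite \cite[\S5.2]{Sev} for it; once you do, your argument collapses to the paper's two-line proof, so the re-derivation buys transparency about where the hypotheses ($n_0\geq 2$, Zariski density) enter, at the cost of some delicacy you would still have to outsource.
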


\begin{proof}
By \cite[Lemma 5.6]{Sev}, there is a unique function $(z,Q)\mapsto F(z,Q)$ for $z\in\mathcal{H}(\Q_{p^2})$, $Q\in\mathcal{P}_{n_0}$ 
satisfying the 
following properties: 
\begin{enumerate}
\item $F(\gamma(z),Q)=F(z,Q\cdot\gamma)$, 
\item $F(z_1,Q)-F(z_2,Q)=\int_{z_2}^{z_1}f(z)Q(z)dz$,
\end{enumerate} 
for all $z$, $z_1$, $z_2$ and all $Q$. 
By Lemma \ref{lemma7.3} we have 
\begin{equation}\label{equiv}
\int^{z_A} Q\omega_f=\langle F_{\omega_f}(z_A),Q^\vee\rangle_{V_{n_0}}.
\end{equation} The result follows then from Corollary \ref{coro5.6}. 
\end{proof}

\subsection{Heegner points, optimal embeddings and false isogenies}\label{Heegner section}

A \emph{Heegner point} (of conductor $1$) on the Shimura curve $X=X_{N^+,pN^-}$ is a point on $X$ corresponding to an abelian surface $A$ with quaternionic multiplication and level $N^+$ structure, such that the ring of endomorphisms of $A$ (over an algebraic closure of $\Q$) which commute with the quaternionic action and respect the level $N^+$ structure is isomorphic to $\mathcal{O}_K$. 
The theory of complex multiplication implies that they are all defined over the Hilbert class field $H$ of $K$. We denote
$\mathrm{Heeg}(\mathcal{O}_K)$ denotes the set of 
Heegner points of conductor $1$ on $X$. 

We now recall Shimura reciprocity law, referring to \cite[\S2.5]{Brooks} for details. 
Fix an ideal $\mathfrak{a}\subseteq\mathcal{O}_K$ and an Heegner point $z$. 
We have then an embedding $\iota_z:K\hookrightarrow\mathcal{B}$,
and since the class number of the indefinite quaternion algebra $\mathcal{B}$ is equal to $1$, there is $\alpha\in\mathcal{B}$ such that 
$\iota_z(\mathfrak{a})\mathcal{R}_\mathrm{max}=\alpha\mathcal{R}_\mathrm{max}$. Right multiplication by $\alpha$ gives a false isogeny 
$\varphi_\alpha:A_z\rightarrow A_{\alpha(z)}$, where for any point $x\in X$ we let $A_x$ denote the false elliptic curve corresponding to $x$. 
If $(\mathfrak{a},N^+M)=1$ then this is a false isogeny of degree prime to $N^+M$. Since $\alpha(z)$ only depends on $\mathfrak{a}$ and not on the choice of $\alpha$, we may write $\alpha(z)=\mathfrak{a}\star z$,  $A_{\mathfrak{a}\star z}=A_{\alpha(z)}$ and $\varphi_\mathfrak{a}=\varphi_\alpha$. 
If we denote $\sigma_\mathfrak{a}$ the element in $\Gal(H/K)$ corresponding to $\mathfrak{a}$ via the arithmetically normalized Artin reciprocity map, Shimura reciprocity law shows that 
$\sigma_\mathfrak{a}(z)=\mathfrak{a}\star z$. Moreover, if we denote 
$\mathcal{W}$ the group of Atkin-Lehner involutions acting on $X$, 
the action of $\mathcal{W}\times\Gal(H/K)$ on the set $\mathrm{Heeg}(\mathcal{O}_K)$ is simply transitive (see \cite[\S 2.3]{BD} or \cite[page 366]{IS}). Fixed a point $z_{0}$ corresponding to the false elliptic curve $A_0$, the correspondences 
$\mathfrak{a}\mapsto \mathfrak{a}\star z_{0}$ and $\mathfrak{a}\mapsto\varphi_\mathfrak{a}:A_0=A_{z_0}\rightarrow A_{\mathfrak{a}\star z_0}$ set up a bijection 
\begin{equation}\label{bije2}
\mathrm{Heeg}(\mathcal{O}_K)\longleftrightarrow
\mathrm{Isog}(A_0)\end{equation}
where $\mathrm{Isog}(A_0)$ denotes the set of false isogenies $\varphi:A_0\rightarrow A$ of degree prime to $N^+M$.

An embedding of $\Q$-algebras $\Psi\colon K\rightarrow B$ is called \emph{optimal of level $N^+$} if $\Psi^{-1}(R)=\cO_K[1/p]$. The group $\Gamma$ acts by conjugation on the set of optimal embeddings. 
Let $\mathrm{Emb}(\mathcal{O}_K)$ be the set of $\Gamma$-conjugacy classes of optimal embeddings, which is non-empty under our assumption (see \cite[Lemma 2.1]{BD-Heegner}).
By \cite[Theorem 5.3]{BD-CD} there exists a bijection 
\begin{equation}\label{bije}
\mathrm{Heeg}(\mathcal{O}_K)\longleftrightarrow \mathrm{Emb}(\mathcal{O}_K).
\end{equation}
We briefly describe how this bijection is obtained. 
Let $z_A$ be an Heegner point corresponding to the abelian surface $A$. 
Let $\End(A)$ denote the endomorphism rings 
of $A$ and $\End(\bar{A})$ the endomorphism rings of the reduction 
$\bar{A}$ of the abelian varietiy $A$ modulo $p$. Define
$\End^0(A)=\End(A)\otimes_\Z\Q$ and 
$\End^0(\bar{A})=\End(\bar{A})\otimes_\Z\Q$ and let 
$\End^0_\mathcal{B}(A)$ and 
$\End^0_\mathcal{B}(\bar{A})$  denote the endomorphisms 
which commute with the action of the quaternion algebra $\mathcal{B}$. 
We then have $\End^0_\mathcal{B}(A)\simeq K$ 
and $\End^0_\mathcal{B}(\bar{A})\simeq B$, and 
the map $\Psi$ associated with $z_A$ as in \eqref{bije} is  
the reduction of endomorphisms:
\[\Psi_{A}:K=\End_\mathcal{B}^0(A)\longrightarrow \End_\mathcal{B}^0(\bar{A})=B.\]
On the other hand, let $\Psi\colon K\longrightarrow B$ be an optimal embedding of level $N^+$. It determines a local embedding $\Psi\colon K_p\longrightarrow B_p$ which we denote in the same way by an abuse of notation. The local embedding $\Psi$ defines an action of $K_p^\times$ on $\HH_p(K_p)$ which has two fixed points, $z_\Psi$ and $\bar{z}_\Psi$. The Heegner point associated to $\Psi$ by \eqref{bije} is the point on $X$ corresponding via the Cerednik-Drinfeld uniformization to the class modulo $\Gamma$ of $z_\Psi$. Abusing notation, in the following we will use the symbol $z_\Psi$ to denote both the fixed point in $\HH_p(K_p)$ and its class in $\Gamma\backslash \HH_p(K_p)=X(K_p)$.


In light of the previous paragraphs, given $\varphi:A_0\rightarrow A\in\mathrm{Isog}(A_0)$, we denote $z_\varphi$ the Heegner points corresponding to $\varphi$ by \eqref{bije2} 
and $\Psi_\varphi$ the optimal embedding 
corresponding to $z_\varphi$ by \eqref{bije}. 
For $\varphi$ the identity map, we denote $z_\varphi$ by $z_0$ 
and $\Psi_\varphi$ by $\Psi_0$.  
Moreover, if we start with an optimal embedding 
$\Psi$, we denote $z_\Psi$ the Heegner point corresponding to $\Psi$ by \eqref{bije}
and $\varphi_\Psi$ the false isogeny corresponding to $z_\Psi$ by \eqref{bije2}.  Finally, if we start with an Heegner point $z$, we denote $\Psi_z$ the optimal embedding corresponding to $z$ via \eqref{bije} 
and $\varphi_z:A_0\rightarrow A_z$ the false isogeny corresponding to 
$z$ via \eqref{bije2}. 
We also introduce a convention for the Galois action: 
for any $\sigma=\sigma_\mathfrak{a}\in\Gal(H/K)$, 
we denote $z_A^\sigma=\mathfrak{a}\star z_A$,
$A^\sigma=A_{\sigma(z_A)}$ and $\Psi^\sigma=\Psi_{z_A^\sigma}$.

Denote $z\mapsto\bar{z}$ the action of the non-trivial automorphism 
$c\in\Gal(\Q_{p^2}/\Q_p)$ on $\mathcal{H}_p(\Q_{p^2})$. 
For each optimal embedding $\Psi$, denote 
\[P_\Psi(x,y)=cx^2+(d-a)xy-by^2=A_\Psi(x-z_\Psi y)(x-\bar{z}_\Psi y)\] the polynomial associated to $\Psi$, 
where $\iota_p(\Psi(\sqrt{D}))=\smallmat abcd$ and $D$ is the discriminant of $K$ (\emph{cf.} as in \cite[(84)]{BD}).
Define the the polynomials
\[Q_{\Psi}^{(j)}(x,y)=(x-z_\Psi y)^{m_0+j}(x-\bar{z}_\Psi y)^{m_0-j}\]
for any positive integer $k$ and any integer $j=-n_0/2,\dots,n_0/2$.
Put $v_\Psi^{(j)}=(Q_\Psi^{(j)})^\vee$ 
and define 
\[v_\varphi^{(j)}=\varphi^\vee_*(v_{\Psi_{\varphi}}^{(j)}).\] 


%

\begin{proposition}\label{prop6.6} Let $\varphi:A_0\rightarrow A$ be a false isogeny. Then 
\[\deg(\varphi)\cdot \int^{z_\varphi} Q_{\Psi_\varphi}^{(j)}\omega_f=\AJ_p(\Delta_\varphi)\left(f\otimes v_\varphi^{(j)}\right).\]
\end{proposition}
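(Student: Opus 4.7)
The plan is to recognize that Proposition \ref{prop6.6} is essentially a direct specialization of Theorem \ref{theorem semidefinite} once we unwind the notation introduced in Section \ref{Heegner section}. Recall that Theorem \ref{theorem semidefinite} asserts, for any false isogeny $\varphi\colon A_0\rightarrow A$ and any $v\in(\mathcal{V}_{n_0})_{z_A}$ with $Q^\vee=v$, the equality
\[\deg(\varphi)\cdot\int^{z_A}Q\omega_f=\AJ_p(\Delta_\varphi)\bigl(f\otimes\varphi^\vee_*(v)\bigr).\]
Our task is to apply this identity to the specific polynomial $Q=Q_{\Psi_\varphi}^{(j)}$ with corresponding dual vector $v=v_{\Psi_\varphi}^{(j)}=(Q_{\Psi_\varphi}^{(j)})^\vee$.

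First I would observe that, under the bijection \eqref{bije2} recalled in Section \ref{Heegner section}, the Heegner point $z_\varphi$ attached to $\varphi$ is precisely the point on $X_\Gamma$ lying over the point $x_A\in X_M$ corresponding to the target $A$ of $\varphi$, so the ``$z_A$'' appearing in Theorem \ref{theorem semidefinite} may be identified with $z_\varphi$. In particular, using the identification $(\mathcal{V}_{n_0})_{z_\varphi}\simeq V_{n_0}$ from the definition of the isocrystal $\mathcal{V}_n$, the vector $v_{\Psi_\varphi}^{(j)}\in V_{n_0}$ is a legitimate element of the stalk $(\mathcal{V}_{n_0})_{z_\varphi}$, and by construction it is the dual $(Q_{\Psi_\varphi}^{(j)})^\vee$ of the polynomial $Q_{\Psi_\varphi}^{(j)}\in\mathcal{P}_{n_0}$.

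Next I would plug these choices into Theorem \ref{theorem semidefinite} to obtain
\[\deg(\varphi)\cdot\int^{z_\varphi}Q_{\Psi_\varphi}^{(j)}\omega_f=\AJ_p(\Delta_\varphi)\bigl(f\otimes\varphi^\vee_*(v_{\Psi_\varphi}^{(j)})\bigr).\]
Finally, invoking the definition $v_\varphi^{(j)}:=\varphi^\vee_*(v_{\Psi_\varphi}^{(j)})$ from the end of Section \ref{Heegner section} yields the desired identity
\[\deg(\varphi)\cdot\int^{z_\varphi}Q_{\Psi_\varphi}^{(j)}\omega_f=\AJ_p(\Delta_\varphi)\bigl(f\otimes v_\varphi^{(j)}\bigr).\]

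There is no substantial obstacle here, since the statement is purely a repackaging of Theorem \ref{theorem semidefinite} in terms of the Heegner-theoretic notation set up in Section \ref{Heegner section}; the only point that requires care is the identification of $z_A$ with $z_\varphi$ via \eqref{bije2}, and the compatibility between the stalk $(\mathcal{V}_{n_0})_{z_\varphi}$ and $V_{n_0}$ used to interpret $v_{\Psi_\varphi}^{(j)}$ as lying in the stalk. Both of these are built into the conventions of Section \ref{Heegner section} and of the de Rham realization described in \S\ref{sec: de Rham realisation}, so the proof is essentially one line.
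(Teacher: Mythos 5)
Your proof is correct and follows exactly the paper's own route: Proposition \ref{prop6.6} is obtained by specializing Theorem \ref{theorem semidefinite} to $Q=Q_{\Psi_\varphi}^{(j)}$, $v=v_{\Psi_\varphi}^{(j)}=(Q_{\Psi_\varphi}^{(j)})^\vee$ at the point $z_A=z_\varphi$, and then invoking the definition $v_\varphi^{(j)}=\varphi^\vee_*(v_{\Psi_\varphi}^{(j)})$ (the paper phrases this last step as the equivalent identity $\varphi_*(v_\varphi^{(j)})=\deg(\varphi)\cdot v_{\Psi_\varphi}^{(j)}$). No issues.
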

\begin{proof} Since $\varphi_*(v_\varphi^{(j)})=\deg(\varphi)\cdot v_{\Psi_\varphi}^{(j)}$, 
the proposition follows from Theorem \ref{theorem semidefinite}. 
\end{proof}

Let $\varphi:A_0\rightarrow A$ be a false isogeny. The abelian variety 
$A$ is defined over $H$, and therefore it is also defined over $\Q_{p^2}$, 
because $p$ is inert in $K$ and therefore splits completely in $H$. 
Let $\bar{A}$ denote the abelian variety obtained by applying to $A$ 
the non-trivial automorphism $c$ of $\Gal(\Q_{p^2}/\Q_p)$, and still denote 
$c:A\rightarrow \bar{A}$ the map induced by $c$. If $\varphi:A_0\rightarrow A$ is a false isogeny, 
then we denote $\bar{\varphi}=c\circ\varphi:A_0\rightarrow\bar{A}$ the isogeny obtained by composition $\varphi$ with $c:A\rightarrow\bar{A}$.
Let $W_p:X\rightarrow X$ denote the Atkin-Lehner involution at $p$. 
If we denote $w_p$ any element of $\mathcal{R}^\times$ such that the $p$-adic valuation of its norm is equal to $1$, which we fix from now on, 
then we have $W_p(z)=w_p(z)$. 
We have (see \emph{e.g.} \cite[Theorem 4.7]{BD-CD}) 
\begin{equation}\label{AL}
z_{\bar{A}}=w_p(\bar{z}_A).
\end{equation}
For the next result, define 
\[ \bar{v}_\varphi^{(j)}= \bar{\varphi}_*^\vee\left(\left(Q_{\Psi_\varphi}^{(j)}|w_p\right)^\vee\right).\]


\begin{proposition}\label{prop6.5} Let $\varphi:A_0\rightarrow A$ be an isogeny. Then 
\[\deg(\varphi)\cdot
\int^{\bar{z}_\varphi} Q_{\Psi_\varphi}^{(j)}\omega_f=\omega_p\cdot\AJ_p(\Delta_{\bar{\varphi}})(f\otimes \bar{v}_\varphi^{(j)}),\]
where $\omega_p\in\{\pm 1\}$ is the eigenvalue of the Atkin-Lehner involution at $p$ acting on $f$.
\end{proposition}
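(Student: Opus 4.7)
The plan is to apply Theorem \ref{theorem semidefinite} to the conjugate isogeny $\bar\varphi$ and then reduce the resulting integral to the one in the statement via an Atkin-Lehner functional equation for the semidefinite integral. Since $c:A\to\bar A$ is an isomorphism of abelian surfaces, $\deg(\bar\varphi)=\deg(\varphi)$. Applying Theorem \ref{theorem semidefinite} to $\bar\varphi:A_0\to\bar A$ with the polynomial $Q=Q_{\Psi_\varphi}^{(j)}|w_p\in\mathcal{P}_{n_0}$, and using \eqref{AL} to identify $z_{\bar A}=w_p(\bar z_\varphi)$, yields
\[
\deg(\varphi)\int^{w_p(\bar z_\varphi)}\bigl(Q_{\Psi_\varphi}^{(j)}|w_p\bigr)\,\omega_f \;=\; \AJ_p(\Delta_{\bar\varphi})\bigl(f\otimes\bar v_\varphi^{(j)}\bigr).
\]
The proposition will therefore follow once we establish the Atkin-Lehner transformation formula
\[
\int^z Q\,\omega_f \;=\; \omega_p\int^{w_p z}(Q|w_p)\,\omega_f \qquad (\star)
\]
for all $Q\in\mathcal{P}_{n_0}$ and all $z\in\HH_p(\hat\Q_p^\unr)$ with non-singular reduction, and then specialise it to $z=\bar z_\varphi$, $Q=Q_{\Psi_\varphi}^{(j)}$.

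To prove $(\star)$ I would invoke the uniqueness statement of \cite[Lemma 5.6]{Sev} used in the proof of Theorem \ref{theorem semidefinite}, which characterises $(z,Q)\mapsto\int^z Q\omega_f$ as the unique function satisfying (i) $F(\gamma z,Q)=F(z,Q|\gamma)$ for $\gamma\in\Gamma$, and (ii) $F(z_1,Q)-F(z_2,Q)=\int_{z_2}^{z_1}fQ\,dz$. Setting $G(z,Q):=\omega_p\int^{w_p z}(Q|w_p)\omega_f$, property (i) follows from the fact that $w_p$ normalises $\Gamma$: writing $\gamma''=w_p\gamma w_p^{-1}\in\Gamma$ one has $w_p\gamma z=\gamma''(w_p z)$, and the identity $w_p\gamma''=\gamma w_p$ modulo the central scalar $w_p^2$ (which acts on polynomials of even degree $n_0$ as a power of $\det(w_p)$ that cancels in the computation) yields $G(\gamma z,Q)=G(z,Q|\gamma)$.

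Property (ii) is verified by the change of variables $z=w_p z'$ in $\int_{w_p z_2}^{w_p z_1}f(z)(Q|w_p)(z)\,dz$, combining the three transformation rules $f(w_p z')=\omega_p\det(w_p)^{1-k_0}(cz'+d)^{k_0}f(z')$ (from the normalised slash action $f|_{k_0}w_p=\omega_p f$), $(Q|w_p)(w_p z')=(-\det w_p)^{n_0}(cz'+d)^{-n_0}Q(z')$ (using that Atkin-Lehner elements satisfy $w_p^2=-\det(w_p)\,I$), and $d(w_p z')=\det(w_p)(cz'+d)^{-2}dz'$. The total exponent of $\det(w_p)$ equals $n_0+2-k_0=0$ and the power of $(cz'+d)$ equals $k_0-n_0-2=0$, so the integrand becomes $\omega_p f(z')Q(z')\,dz'$; combined with the prefactor $\omega_p$ in the definition of $G$ and the identity $\omega_p^2=1$, this gives $G(z_1,Q)-G(z_2,Q)=\int_{z_2}^{z_1}fQ\,dz$. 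Uniqueness then gives $G(z,Q)=\int^z Q\omega_f$, which is $(\star)$. Specialising to $z=\bar z_\varphi$, $Q=Q_{\Psi_\varphi}^{(j)}$ and multiplying by $\deg(\varphi)$ combines with the first displayed equation to yield the proposition.

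The main obstacle is the verification of property (ii) of $G$: the three $\det(w_p)$ contributions and the three powers of $(cz'+d)$ must combine cleanly, which depends both on the precise normalisation of the weight-$k_0$ slash action so that $f|_{k_0}w_p=\omega_p f$, and on the numerical identities $k_0-n_0-2=0$ and $n_0$ even (so that $(-1)^{n_0}=1$); these are what make the Atkin-Lehner functional equation $(\star)$ hold on the nose, with no spurious $\det(w_p)$ factor.
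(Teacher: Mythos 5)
Your proof is correct and follows essentially the same route as the paper's: both rest on the Atkin--Lehner relation \eqref{AL} between $\bar z_\varphi$ and $z_{\bar A}$, the functional equation $\int^{w_p z}Q\,\omega_f=\omega_p\int^{z}(Q|w_p)\,\omega_f$ for the semidefinite integral, and Theorem \ref{theorem semidefinite} applied to $\bar\varphi$, merely chained in the opposite order. The only (inessential) difference is that the paper verifies the functional equation via the transformation of the Coleman primitive $F_{\omega_f}$ under $W_p$, citing the computation of Lemma \ref{lemma7.3}, whereas you verify it via the uniqueness clause of \cite[Lemma 5.6]{Sev} together with an explicit change of variables; these are equivalent, since Lemma \ref{lemma7.3} is itself the check of that characterisation.
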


\begin{proof} 
By \eqref{AL}, and the fact that $W_p$ is an involution, we have 
\[\int^{\bar{z}_\varphi} Q_{\Psi_\varphi}^{(j)}\omega_f=\int^{w_p(z_{\bar{\varphi}} )}Q_{\Psi_\varphi}^{(j)}\omega_f.\]
Since $W_p$ acts on $F_{\omega_f}$ as multiplication by $\omega_p\in\{\pm 1\}$, one easily checks (using the same calculations as in Lemma \ref{lemma7.3}) that 
\[\int^{w_p(z_{\bar{\varphi}} )}Q_{\Psi_\varphi}^{(j)}\omega_f=
\omega_p\cdot\int^{z_{\bar{\varphi}}}\left(Q_{\Psi_\varphi}^{(j)}|w_p\right)\omega_f.\]
The result follows then from Theorem 
\ref{theorem semidefinite}. 
\end{proof}

\subsection{Two variables anticyclotomic $p$-adic $L$-functions}
For each optimal embedding $\Psi$, we consider the lattice $L_\Psi=L_{z_\Psi}$; recall that this lattice is characterised up to homothety by the condition that $L_\Psi$ is stable by the action of $\Psi(K_p^\times)$, where $K_p=K\otimes_\Q\Q_p\simeq\Q_{p^2}$ (\emph{cf.} \cite[\S3.2]{BD}). 
Recall that the function $(x,y)\mapsto\ord_p\left(P_\Psi(x,y)\right)$ is constant 
on $L'_\Psi$, and its constant value is equal to $\ord_p(|L_\Psi|)$ (see \cite[Lemma 3.7]{BD}). 
Therefore, by eventually translating $(\Psi,L_\Psi)$ by an appropriate element of $R^\times$ in such a way that $|L_\Psi|=1$, we have 
$\langle P_\Psi(x,y)\rangle=P_\Psi(x,y)$ for all $(x,y)\in L_\Psi'$.
Moreover, note that 
\[Q_{\Psi}^{(j)}(x,y)=
\frac{P_\Psi(x,y)^{m_0}}{A_\Psi^{m_0}}\left(\frac{x-z_\Psi y}{x-\bar{z}_\Psi y}\right)^j.
\] If $j\equiv 0\pmod{p+1}$, then we have 
\[\left(\frac{x-z_\Psi y}{x-\bar{z}_\Psi y}\right)^j=\left\langle\frac{x-z_\Psi y}{x-\bar{z}_\Psi y}\right\rangle^j\]
for all $(x,y)\in L'_\Psi$. In fact, the $p$-adic valuation of $x-z_\Psi y$ and $x-\bar{z}_\Psi y$ are equal and, if $x-z_\Psi y=\zeta\langle x-z_\Psi y\rangle$ then $x-\bar{z}_\Psi y=\bar{\zeta}\langle x-\bar{z}_\Psi y\rangle$, where $\zeta$ is a $(p^2-1)$-th root of unity. Since $\frac{\zeta}{\bar{\zeta}}=\frac{\zeta}{\zeta^p}=\zeta^{p^2-p}$, if $j\equiv 0\pmod{p+1}$ then $\zeta^{j(p^2-p)}=1$.

\begin{definition}\label{def of p-adic L-function 1}
The \emph{partial two-variable anticyclotomic $p$-adic $L$-function}
associated to $\Phi$ and $[\Psi]\in \mathrm{Emb}(\cO_K)$ 
is the function defined for $(k,s)\in U\times\Z_p$ as
\[\mathcal{L}_p(\Phi/K,\Psi,k,s)=\frac{A_\Psi^{\frac{k-k_0}{2}}}{|L_\Psi|^{m_0}}
\int_{L'_\Psi}P_\Psi^{m_0}(x,y)\langle x-z_\Psi y\rangle^{s-k_0/2}\langle x-\bar{z}_{\Psi}y\rangle ^{k-s-k_0/2}d\mu_{L_\Psi}.\]
\end{definition}

The restriction of $\mathcal{L}_p(\Phi/K,\Psi,k,s)$ to the line $s=k/2+j$, for $-n/2\leq j\leq n/2$ an integer, is then the function 
\[\mathcal{L}_p^{(j)}(\Phi/K,\Psi,k)=\frac{A_\Psi^{\frac{k-k_0}{2}}}{|L_\Psi|^{m_0}}
\int_{L'_\Psi}P_\Psi^{m_0}(x,y)
\left\langle 
\frac{x-z_\Psi y}{x-\bar{z}_\Psi y}
\right\rangle^j
\langle x-z_\Psi y\rangle^{\frac{k-k_0}{2}}\langle x-\bar{z}_{\Psi}y\rangle ^{\frac{k-k_0}{2}}d\mu_{L_\Psi}.\]
 
\begin{proposition}\label{prop7.5} Let $\varphi:A_0\rightarrow A$ be a false  isogeny.  
Suppose that 
$j\equiv 0\pmod{p+1}$. Then we have $\mathcal{L}_p^{(j)}(\Phi/K,\Psi_\varphi,k_0)=0$ and 
\[\frac{d}{dk}\left(\mathcal{L}_p^{(j)}(\Phi/K,\Psi_\varphi,k)\right)_{\vert_{k=k_0}}=
\frac{A_{\Psi_\varphi}^{m_0}}{2\deg(\varphi)}\left(\AJ_p(\Delta_\varphi)(f\otimes v_{\varphi}^{(j)})+
\omega_p\cdot
\AJ_p(\Delta_{\bar{\varphi}})(f\otimes \bar{v}_\varphi^{(j)})\right)
.\]
\end{proposition}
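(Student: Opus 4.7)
The plan is to rewrite $\mathcal{L}_p^{(j)}(\Phi/K,\Psi_\varphi,k)$ as an integral against $\mu_{L_{\Psi_\varphi}}$ with a symmetric weight factor involving both $\langle x-z_{\Psi_\varphi}y\rangle^{(k-k_0)/2}$ and $\langle x-\bar z_{\Psi_\varphi}y\rangle^{(k-k_0)/2}$, and then to recognise its derivative at $k=k_0$ as half the sum of the two semidefinite integrals at $z_\varphi$ and $\bar z_\varphi$; the conclusion then drops out of Propositions \ref{prop6.6} and \ref{prop6.5}.

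Concretely, the hypothesis $j\equiv 0\pmod{p+1}$ together with the observation in the paragraph preceding Definition \ref{def of p-adic L-function 1} lets one drop the brackets in $\langle (x-z_{\Psi_\varphi}y)/(x-\bar z_{\Psi_\varphi}y)\rangle^j$ on $L'_{\Psi_\varphi}$, and combining with the factorisation $P_{\Psi_\varphi}^{m_0}=A_{\Psi_\varphi}^{m_0}(x-z_{\Psi_\varphi}y)^{m_0}(x-\bar z_{\Psi_\varphi}y)^{m_0}$ expresses $\mathcal{L}_p^{(j)}(\Phi/K,\Psi_\varphi,k)$ as $A_{\Psi_\varphi}^{m_0+(k-k_0)/2}|L_{\Psi_\varphi}|^{-m_0}\,G(k)$, where
\[
G(k):=\int_{L'_{\Psi_\varphi}}Q_{\Psi_\varphi}^{(j)}(x,y)\,\langle x-z_{\Psi_\varphi}y\rangle^{(k-k_0)/2}\langle x-\bar z_{\Psi_\varphi}y\rangle^{(k-k_0)/2}\,d\mu_{L_{\Psi_\varphi}}.
\]
After normalising so that $|L_{\Psi_\varphi}|=1$, the asserted vanishing $\mathcal{L}_p^{(j)}(\Phi/K,\Psi_\varphi,k_0)=0$ is equivalent to $G(k_0)=0$, which is the standard central-value vanishing of the two-variable $p$-adic $L$-function of \cite{BD,Sev}, reflecting the exceptional-zero phenomenon due to $f$ being new at $p$.

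Next, differentiating at $k=k_0$ and using $G(k_0)=0$ to kill the contribution of $\tfrac{d}{dk}A_{\Psi_\varphi}^{m_0+(k-k_0)/2}$ yields $\tfrac{d}{dk}\mathcal{L}_p^{(j)}(\Phi/K,\Psi_\varphi,k)|_{k=k_0}=A_{\Psi_\varphi}^{m_0}\,G'(k_0)$. To convert $G'(k_0)$ into semidefinite integrals I would introduce the two auxiliary functions $H_z(k)$ and $H_{\bar z}(k)$ obtained by replacing the symmetric weight factor in $G(k)$ with $\langle x-z_{\Psi_\varphi}y\rangle^{k-k_0}$ and $\langle x-\bar z_{\Psi_\varphi}y\rangle^{k-k_0}$ respectively. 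A Taylor expansion in $s=k-k_0$ shows that $\langle u\rangle^{s/2}\langle v\rangle^{s/2}-\tfrac12\bigl(\langle u\rangle^s+\langle v\rangle^s\bigr)=O(s^2)$ for any $1$-units $u,v$, which after integration against the $k$-varying measure $\mu_{L_{\Psi_\varphi}}$ propagates to $G(k)-\tfrac12(H_z(k)+H_{\bar z}(k))=O((k-k_0)^2)$; since the three functions all vanish at $k_0$, one gets $G'(k_0)=\tfrac12\bigl(H_z'(k_0)+H_{\bar z}'(k_0)\bigr)$. By the very definition \eqref{semidefinite} of the semidefinite integral, and using that both $z_\varphi$ and $\bar z_\varphi$ are fixed by the same $K_p^\times$-action (so that $L_{z_\varphi}=L_{\bar z_\varphi}=L_{\Psi_\varphi}$), these derivatives are exactly $\int^{z_\varphi}Q_{\Psi_\varphi}^{(j)}\omega_f$ and $\int^{\bar z_\varphi}Q_{\Psi_\varphi}^{(j)}\omega_f$.

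Finally, I would invoke Propositions \ref{prop6.6} and \ref{prop6.5} to rewrite $\deg(\varphi)\int^{z_\varphi}Q_{\Psi_\varphi}^{(j)}\omega_f=\AJ_p(\Delta_\varphi)(f\otimes v_\varphi^{(j)})$ and $\deg(\varphi)\int^{\bar z_\varphi}Q_{\Psi_\varphi}^{(j)}\omega_f=\omega_p\cdot\AJ_p(\Delta_{\bar\varphi})(f\otimes\bar v_\varphi^{(j)})$, and assemble the factor $A_{\Psi_\varphi}^{m_0}/(2\deg(\varphi))$ to obtain the stated formula. The hardest step is expected to be the vanishing $G(k_0)=0$: while morally this is the central-value exceptional zero of the BD/Sev two-variable $p$-adic $L$-function, making it rigorous in the present notation requires unfolding the construction of $\mu_{L_{\Psi_\varphi}}$ from $\Phi$ via \cite[Proposition~3.5]{Sev} and checking that the rescaling of $(\Psi_\varphi,L_{\Psi_\varphi})$ which ensures $|L_{\Psi_\varphi}|=1$ is compatible with the conventions fixed in Section \ref{Heegner section}; a secondary technical point is justifying that the Taylor expansion in $s$ commutes with integration against the Coleman-family measure, which follows from continuity of the specialisation maps $\rho_k$.
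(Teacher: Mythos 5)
Your argument follows the paper's proof essentially step for step: the same rewriting of $\mathcal{L}_p^{(j)}(\Phi/K,\Psi_\varphi,k)$ using the congruence on $j$ and the factorisation of $P_{\Psi_\varphi}^{m_0}$, the same splitting of the derivative at $k_0$ into half the sum of the two one-sided derivatives (which the paper obtains by citing \cite[Proposition 3.1]{Sev} rather than by your direct Taylor expansion $\langle u\rangle^{s/2}\langle v\rangle^{s/2}-\tfrac12(\langle u\rangle^s+\langle v\rangle^s)=O(s^2)$), the same identification with the semidefinite integrals via $L_{z_\varphi}=L_{\bar z_\varphi}=L_{\Psi_\varphi}$, and the same final appeal to Propositions \ref{prop6.6} and \ref{prop6.5}. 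The one step you flag as hardest, the vanishing $G(k_0)=0$, is in the paper simply a citation of \cite[Propositions 3.8 and 6.2]{Sev} (the total $Q$-moment of $\mu_{L}$ over $L'$ vanishes for every $Q$ of degree $n_0$), so no further unfolding of the construction of $\mu_{L_{\Psi_\varphi}}$ is required.
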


\begin{proof} The congruence conditions imposed to $j$
combined with the observations before Definition \ref{def of p-adic L-function 1} 
imply that 
\[\mathcal{L}_p^{(j)}(\Phi/K,\Psi_\varphi,k)=\frac{A_{\Psi_\varphi}^{\frac{k-k_0}{2}}}{|L_{\Psi_\varphi}|^{m_0}}
\int_{L'_{\Psi_\varphi}}A_{\Psi_\varphi}^{m_0}Q_{\Psi_\varphi}^{(j)}(x,y)
\langle x-z_\varphi y\rangle^{\frac{k-k_0}{2}}\langle x-\bar{z}_\varphi y\rangle ^{\frac{k-k_0}{2}}d\mu_{L_{\Psi_\varphi}}.\]
The value at $k_0$ is then 
\[\mathcal{L}_p^{(j)}(\Phi/K,\Psi_\varphi,k_0)=\frac{A_{\Psi_\varphi}^{m_0}}{|L_{\Psi_\varphi}|^{m_0}}
\int_{L'_{\Psi_\varphi}}Q_{\Psi_\varphi}^{(j)}(x,y)d\mu_{L_{\Psi_\varphi}}\]
which is equal to $0$ by \cite[Propositions 3.8 and 6.2]{Sev}. 
By \cite[Proposition 3.1]{Sev}, for any $Q\in\mathcal{P}_{n_0}$, any lattice $L$ and any $z_1,z_2\in \mathcal{H}_p(\Q_{p^2})$  
we have
\[
\frac{d}{dk}\left(
\int_{L'}Q(x,y)\langle x-z_1 y\rangle^{\frac{k-k_0}{2}}
\langle x-z_2 y\rangle^{\frac{k-k_0}{2}}d\mu_L\right)_{\vert k=k_0}
\]
is the sum
\[\frac{1}{2}\frac{d}{dk}\left(\int_{L'}Q(x,y)\langle x-z_1 y\rangle^{k-k_0}d\mu_L\right)_{\vert k=k_0}+
\frac{1}{2}\frac{d}{dk}\left(\int_{L'}Q(x,y)\langle x-z_2 y\rangle^{k-k_0}d\mu_L\right)_{\vert k=k_0}.\]
If we take $L=L_{z_\varphi}=L_{\Psi_\varphi}$, $z_1=z_\varphi$ and $z_2=\bar{z}_\varphi$, 
the first summand in the 
above formula is $\frac{1}{2}|L_{\Psi_\varphi}|^{m_0}\int^{z_\varphi}Q\omega_f$, while the second summand is 
\[\frac{1}{2}\frac{d}{dk}\left(\int_{L_{z_\varphi}}Q(x,y)\langle x-\bar{z}_\varphi y\rangle^{k-k_0}d\mu_{L_{z_\varphi}}\right)_{\vert k=k_0}.\]
We now observe that $L_{z_\varphi}=L_{\bar{z}_\varphi}$ and therefore the second summand is $\frac{1}{2}|L_{\Psi_\varphi}|^{m_0}\int^{\bar{z}_\varphi} Q\omega_f$: this is because, as recalled above, lattice $L_\Psi$ attached to an optimal embedding $\Psi:K\rightarrow B$ is characterised up to homothety by the condition that $L_\Psi$ is stable by the action of $\Psi(\Q_{p^2})$. 
The result then follows from  
Proposition \ref{prop6.6} and Proposition \ref{prop6.5}.\end{proof} 

Let $K_\infty$ be the maximal anticyclotomic extension of $K$ which is unramified outside $p$. Write $\tilde{G}$ for $\Gal(K_\infty/K)$ and $\Delta$ for $\Gal(H/K)$. As recalled above, 
the group $\mathcal{W}\times\Delta$ acts freely and transitively on $\Emb(\mathcal{O}_K)$, and, by the Shimura Reciprocity Law, 
this action corresponds to the natural action of $\mathcal{W}\times\Delta$ on the set of Heegner points under the bijection \eqref{bije}. 
Denote by $\Xi$ the set of $\Delta$-orbits in $\mathrm{Emb}(\cO_K)$ and fix $\xi\in\Xi$. If $\Delta=\{\bar{\delta}_1,\dots,\bar{\delta}_h\}$ then $\Psi_i=\Psi_0^{\bar{\delta}_i^{-1}}$ are representatives for the elements of $\Xi$, for a fixed $\Psi_0\in\mathrm{Emb}(\cO_K)$.
Let $\chi\colon \tilde{G}\rightarrow\overline{\Q}_p^\times$ be a character factoring through $\Delta$. 
The optimal embeddings $\Psi_i$ correspond to Heegner points $z_i=z_0^{\bar{\delta}_i^{-1}}$, 
and these come from isogenies $\varphi_i=\varphi_0^{\bar{\delta}_{i}^{-1}}:A_0\rightarrow A_i=A_{z_i}$. 

\begin{definition}\label{def of p-adic L-function 2} The \emph{two-variable anticyclotomic $p$-adic $L$-function} associated to $\Phi$ and the character $\chi$
is the function defined for $(k,s)\in U\times\Z_p$ as
$$\mathcal{L}_p(\Phi/K,k,s,\chi)=\sum_{i=1}^h\chi(\delta_i)\cdot\mathcal{L}_p(\Phi/K,\Psi_i,k,s),$$
where $\delta_i\in\tilde{G}$ is a lift of $\bar{\delta}_i\in\Delta$.
\end{definition}
The restriction of $\mathcal{L}_p(\Phi/K,\Psi,k,s)$ to the line $s=k/2+j$, for $-n/2\leq j\leq n/2$ an integer, is then the function 
\[
\mathcal{L}_p^{(j)}(\Phi/K,k,\chi)=\sum_{i=1}^h\chi(\delta_i)\cdot\mathcal{L}_p^{(j)}(\Phi/K,\Psi_i,k).\]

\begin{theorem}\label{TheoremB} Suppose that 
$j\equiv 0\pmod{p+1}$. Then we have $\mathcal{L}_p^{(j)}(\Phi/K,k,\chi)=0$
and 
\[\frac{d}{dk}\left(\mathcal{L}_p^{(j)}(\Phi/K,k,\chi)\right)_{\vert k=k_0}=\sum_{i=1}^h
\frac{A_{\Psi_i}^{m_0}\cdot\chi(\delta_i)}{2\cdot \deg(\varphi_i)}
\left(\AJ_p(\Delta_{\varphi_i})(f\otimes v_{\varphi_i}^{(j)})+\omega_p
\AJ_p(\Delta_{\bar{\varphi}_i})(f\otimes \bar{v}_{{\varphi_i}}^{(j)})
\right).\]
\end{theorem}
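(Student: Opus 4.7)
The plan is to deduce Theorem \ref{TheoremB} directly from Proposition \ref{prop7.5} by exploiting the linearity in the definition of $\mathcal{L}_p^{(j)}(\Phi/K,k,\chi)$ and the bijections set up in Section \ref{Heegner section}. The main point is that the sum-over-$\Delta$-orbits structure of the $p$-adic $L$-function is perfectly aligned with the structure of the generalised Heegner cycles indexed by isogenies $\varphi_i$, so no new analytic input is needed once Proposition \ref{prop7.5} is in place.

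First I would unfold the definition:
\[
\mathcal{L}_p^{(j)}(\Phi/K,k,\chi)=\sum_{i=1}^h\chi(\delta_i)\cdot\mathcal{L}_p^{(j)}(\Phi/K,\Psi_i,k),
\]
where the $\Psi_i=\Psi_0^{\bar\delta_i^{-1}}$ are the chosen representatives of the $\Delta$-orbits in $\mathrm{Emb}(\cO_K)$. Under the bijections \eqref{bije} and \eqref{bije2}, each $\Psi_i$ corresponds to a Heegner point $z_i=z_0^{\bar\delta_i^{-1}}$ and then to a false isogeny $\varphi_i\colon A_0\to A_i$ with $A_i=A_{z_i}$, exactly as stated in the theorem.

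Next, evaluating at $k=k_0$ and using Proposition \ref{prop7.5} term by term, each $\mathcal{L}_p^{(j)}(\Phi/K,\Psi_i,k_0)=0$, hence $\mathcal{L}_p^{(j)}(\Phi/K,k_0,\chi)=0$. To compute the derivative, I would differentiate the finite sum term-by-term at $k=k_0$ and apply Proposition \ref{prop7.5} to each $\Psi_i=\Psi_{\varphi_i}$; this yields
\[
\frac{d}{dk}\left(\mathcal{L}_p^{(j)}(\Phi/K,k,\chi)\right)_{|k=k_0}=\sum_{i=1}^h\chi(\delta_i)\cdot\frac{A_{\Psi_i}^{m_0}}{2\deg(\varphi_i)}\Bigl(\AJ_p(\Delta_{\varphi_i})(f\otimes v_{\varphi_i}^{(j)})+\omega_p\,\AJ_p(\Delta_{\bar\varphi_i})(f\otimes\bar v_{\varphi_i}^{(j)})\Bigr),
\]
which is exactly the asserted formula.

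There is essentially no obstacle here beyond verifying that the congruence condition $j\equiv 0\pmod{p+1}$, which is needed for Proposition \ref{prop7.5}, is a global condition on $j$ and hence holds simultaneously for every $\Psi_i$; and checking that the notation $v_{\varphi_i}^{(j)}$, $\bar v_{\varphi_i}^{(j)}$ introduced just before Proposition \ref{prop6.6} and Proposition \ref{prop6.5} is indeed the one used in the theorem statement. The only mildly delicate point is that one should note that the derivative commutes with the finite sum (immediate) and that the constants $A_{\Psi_i}$ and $\deg(\varphi_i)$ may a priori depend on $i$ but are treated as part of the coefficient, so no further normalisation is required. Thus the theorem is a direct corollary of Proposition \ref{prop7.5} and Definition \ref{def of p-adic L-function 2}.
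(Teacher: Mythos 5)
Your proposal is correct and follows exactly the paper's own argument: the paper's proof of Theorem \ref{TheoremB} is the one-line statement that it ``follows from Proposition \ref{prop7.5} and the definitions,'' which is precisely the term-by-term application of Proposition \ref{prop7.5} to the finite sum defining $\mathcal{L}_p^{(j)}(\Phi/K,k,\chi)$ that you spell out. No further comment is needed.
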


\begin{proof} 
The result follows from Proposition \ref{prop7.5} and the definitions. 
\end{proof}

\begin{remark}
The function $\mathcal{L}_p(\Phi/K,k,\chi)=\mathcal{L}_p^{(0)}(\Phi/K,k,\chi)$ is a \emph{square-root} $p$-adic $L$-function, in the sense that the value of 
\[{L}_p(\Phi/K,k,\chi)=\mathcal{L}_p(\Phi/K,k,\chi)\cdot \mathcal{L}_p(\Phi/K,k,\chi^{-1})\] at integers $k\geq 2$, $k\equiv k_0 \pmod{p-1}$, $k\neq k_0$, satisfies an 
interpolation formula of the following shape: 
\[L_p(\Phi/K,k,\chi)\overset\cdot=L_K^\mathrm{alg}(f_k^\sharp,\chi,k/2).\]
In the formula above we adopt the following notation. 
First, for each even integer $k$ as above, 
let $f_k$ be the  classical modular form of $\Gamma_0(N)$ and weight $k$ which 
correspond under the Jacquet-Langlands correspondence to the specialization 
$\rho_k(\Phi)\in S_k(\Sigma)$ of $\Phi$ in weight $k$, it is well defined up to scalars; a version for families of the Jacquet-Langlands correspondence allows us to see these forms as classical specialisations of a Coleman family $f_\infty$ of modular forms. Denote  
$f_k^\sharp$ the newform of level $N/p$ whose $p$-stabilisation is 
$f_k$ if $k\neq k_0$ or $f$ is old at $p$, and $f_{k_0}^\sharp=f$ otherwise; 
 $L_K^\mathrm{alg}(f_k^\sharp,\chi,k/2)$ denote the algebraic part of the 
value at $s=k/2$ of the complex $L$-function $L_K(f_k^\sharp,\chi,s)$, which is obtained by dividing 
$L_K(f_k^\sharp,\chi,k/2)$ by a suitable complex period; the symbol $\overset\cdot=$ means that the equality is up to explicit algebraic factors. 
See \cite[Theorem 9.1]{Sev} for details. 
It is a very interesting task to investigate similar interpolation properties of
$\mathcal{L}_p^{(j)}(\Phi/K,k,\chi)$: the natural question is 
if $\mathcal{L}_p^{(j)}(\Phi/K,k,\chi)$ is related to $L_K^\mathrm{alg}(f_k,\chi,k/2+j)$ in a way similar to what happens in the case $j=0$. 
\end{remark}

\subsection{One variable anticyclotomic $p$-adic $L$-functions}
In this section we use the results collected in the previous sections to give an extension of the results in \cite{Masdeu} on the first derivative of the $1$-variable anticyclotomic $p$-adic $L$-function. 

Denote by $L_p(f/K,\Psi,\star,s)$ the \emph{partial anticyclotomic $p$-adic $L$-function} of $f$ and $K$ attached to the pair $(\Psi,\star)$, where $\Psi$ is an optimal embedding as in \S\ref{Heegner section} 
and $\star\in\PP^1(\Q_p)$ a base point (\cite{BDIS}); this is a function of the $p$-adic variable $s\in\Z_p$ defined by
$$L_p(f/K,\Psi,\star,s)=\int_G \langle\alpha\rangle^{s-\frac{k_0}{2}}d\mu_{f,\Psi,\star}(\alpha),$$
where $\langle\alpha\rangle^t=\exp(t\log_f(\langle\alpha\rangle))$ for all $t\in\Z_p$ and $\mu_{f,\Psi,\star}$ is the local analytic distribution on $G=K_{p,1}^\times$, the compact subgroup of $K_p^\times$ of elements of norm $1$, defined in \cite[Section 2.4]{BDIS}. 


\begin{proposition}\label{prop6.1} Let $\varphi:A_0\rightarrow A$ be a false isogeny.  
For integer $-n_0/2\leq j\leq n_0/2$ with $j\equiv0\pmod{p+1}$ we have
$L_p(f/K,\Psi_\varphi,\infty,k_0/2+j)=0$ and 
\[L_p'(f/K,\Psi_\varphi,\infty,s)_{\vert s=\frac{k_0}{2}+j}=\frac{A_{\Psi_\varphi}^{m_0}}{\deg(\varphi)}\left(\AJ_p(\Delta_\varphi)(f\otimes v_{\varphi}^{(j)})-\omega_p\cdot
\AJ_p(\Delta_{\bar{\varphi}})(f\otimes \bar{v}_{\varphi}^{(j)})
\right).\]\end{proposition}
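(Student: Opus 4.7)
The plan is to follow the strategy of Proposition \ref{prop7.5}, replacing the two-variable integrand by its one-variable analogue and tracking the sign change induced by the parameterization of the norm-one subgroup $G = K_{p,1}^\times$.

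First I would use the constructions of \cite{BDIS} to realise the distribution $\mu_{f,\Psi_\varphi,\infty}$ on $G$ as coming from $\mu_{L_{\Psi_\varphi}}$ via the map $\phi\colon L'_{\Psi_\varphi}\to G$, $(x,y)\mapsto (x-z_\varphi y)/(x-\bar z_\varphi y)$, whose image lies in $G$ because $z_\varphi$ and $\bar z_\varphi$ are $\Gal(\Q_{p^2}/\Q_p)$-conjugate. After normalising $\Psi_\varphi$ so that $|L_{\Psi_\varphi}|=1$ (as done before Definition \ref{def of p-adic L-function 1}), this identifies
\[
L_p(f/K,\Psi_\varphi,\infty,s)= \int_{L'_{\Psi_\varphi}} P_{\Psi_\varphi}^{m_0}(x,y)\, \langle x-z_\varphi y\rangle^{s-k_0/2}\, \langle x-\bar z_\varphi y\rangle^{-(s-k_0/2)}\, d\mu_{L_{\Psi_\varphi}}(x,y),
\]
with the $P_{\Psi_\varphi}^{m_0}$ factor enforcing the weight-$k_0$ normalisation and matching the $P_\Psi^{m_0}$ appearing in Definition \ref{def of p-adic L-function 1}.

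At $s = k_0/2 + j$ with $j\equiv 0\pmod{p+1}$, the congruence observations recorded immediately before Definition \ref{def of p-adic L-function 1} permit us to remove the angle brackets, giving
\[
\langle x-z_\varphi y\rangle^{j}\langle x-\bar z_\varphi y\rangle^{-j}=\left(\frac{x-z_\varphi y}{x-\bar z_\varphi y}\right)^{j};
\]
combined with $P_{\Psi_\varphi}^{m_0}=A_{\Psi_\varphi}^{m_0}(x-z_\varphi y)^{m_0}(x-\bar z_\varphi y)^{m_0}$, this collapses the integrand to $A_{\Psi_\varphi}^{m_0}Q_{\Psi_\varphi}^{(j)}(x,y)$, and the integral vanishes by \cite[Propositions 3.8 and 6.2]{Sev}, exactly as in the vanishing step of Proposition \ref{prop7.5}. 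To compute the first derivative I differentiate under the integral sign; writing $\langle t\rangle^{s-k_0/2}=\exp\bigl((s-k_0/2)\log_f\langle t\rangle\bigr)$, the derivative of the integrand at $s=k_0/2+j$ equals
\[
A_{\Psi_\varphi}^{m_0}\, Q_{\Psi_\varphi}^{(j)}(x,y)\bigl(\log_f\langle x-z_\varphi y\rangle - \log_f\langle x-\bar z_\varphi y\rangle\bigr).
\]
The key contrast with Proposition \ref{prop7.5} is that there the two $\langle\cdot\rangle$-factors carry the same exponent and their derivatives \emph{add}, whereas here they carry opposite exponents and their derivatives \emph{subtract}; this is precisely the source of the minus sign in the statement.

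Applying the definition \eqref{semidefinite} of the semidefinite integral (with $|L_{\Psi_\varphi}|=1$), the two log-terms yield $A_{\Psi_\varphi}^{m_0}\int^{z_\varphi}Q_{\Psi_\varphi}^{(j)}\omega_f$ and $A_{\Psi_\varphi}^{m_0}\int^{\bar z_\varphi}Q_{\Psi_\varphi}^{(j)}\omega_f$ respectively, and Propositions \ref{prop6.6} and \ref{prop6.5} convert these into $\tfrac{1}{\deg(\varphi)}\AJ_p(\Delta_\varphi)(f\otimes v_\varphi^{(j)})$ and $\tfrac{\omega_p}{\deg(\varphi)}\AJ_p(\Delta_{\bar\varphi})(f\otimes \bar v_\varphi^{(j)})$, delivering the stated identity. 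The main obstacle is the first step: pinning down the precise normalisation in the pushforward identification of $\mu_{f,\Psi_\varphi,\infty}$ with a measure built from $\mu_{L_{\Psi_\varphi}}$, so that the constants match those appearing in Proposition \ref{prop7.5}. Once that comparison with \cite[Section 2.4]{BDIS} is secured, the derivative computation and the invocations of Propositions \ref{prop6.6} and \ref{prop6.5} are essentially mechanical, and the expected sign pattern falls out automatically from the opposite exponents of the two bracketed factors.
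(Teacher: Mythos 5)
Your route is genuinely different from the paper's. The paper works directly with the definition of $\mu_{f,\Psi_\varphi,\infty}$ from \cite{BDIS} as a measure built from Teitelbaum's boundary distribution $\mu_f$ on $\PP^1(\Q_p)$: it writes the derivative as $\int_{\PP^1(\Q_p)}\log_f\bigl(\tfrac{x-z_\varphi}{x-\bar z_\varphi}\bigr)\bigl(\tfrac{x-z_\varphi}{x-\bar z_\varphi}\bigr)^j P_{\Psi_\varphi}^{m_0}(x)\,d\mu_f(x)$, expands the logarithm as a Coleman integral $\int_{\bar z_\varphi}^{z_\varphi}\frac{dz}{z-x}$, interchanges the order of integration, applies Teitelbaum's Poisson inversion formula $\int_{\PP^1(\Q_p)}\frac{d\mu_f(x)}{z-x}=f(z)$, and identifies the resulting line integral $\int_{\bar z_\varphi}^{z_\varphi}f\,Q_{\Psi_\varphi}^{(j)}\,dz$ with $\int^{z_\varphi}Q_{\Psi_\varphi}^{(j)}\omega_f-\int^{\bar z_\varphi}Q_{\Psi_\varphi}^{(j)}\omega_f$ via Lemma \ref{lemma7.3}(2); the vanishing at $s=k_0/2+j$ comes from \cite[Lemma 5.1]{Masdeu}. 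You instead propose to identify $L_p(f/K,\Psi_\varphi,\infty,s)$ with the weight-$k_0$ specialization $\mathcal{L}_p(\Phi/K,\Psi_\varphi,k_0,s)$ of the two-variable $L$-function and then differentiate in $s$ under the integral sign, mirroring Proposition \ref{prop7.5}. If your first step holds, the rest goes through, and it does isolate correctly the source of the minus sign (opposite exponents on the two bracketed factors, versus equal exponents in Proposition \ref{prop7.5}).

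Two caveats, the first of which is a genuine gap. The identification in your opening step --- that $\mu_{f,\Psi_\varphi,\infty}$ on $G$ is the pushforward of $P^{m_0}_{\Psi_\varphi}\,d\mu_{L_{\Psi_\varphi}}$ specialized at $k=k_0$ under $(x,y)\mapsto(x-z_\varphi y)/(x-\bar z_\varphi y)$ --- is precisely the assertion that the one-variable anticyclotomic $p$-adic $L$-function of \cite{BDIS} is the restriction of the two-variable one to the line $k=k_0$. The paper neither proves nor cites this in the form you need; its own argument via $\mu_f$ and Poisson inversion is, in effect, the verification of that compatibility, so you cannot invoke it as a black box without importing an argument of comparable depth from \cite{BD} or \cite{Sev} relating the weight-$k_0$ specialization of the lattice measures to the harmonic cocycle of $f$ (with the correct base point $\infty$ and normalisation). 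Second, a term-by-term imprecision: the semidefinite integral \eqref{semidefinite} is a $k$-derivative of $\int_{L'}Q\langle x-zy\rangle^{k-k_0}d\mu_{L}$ in which the measure $\mu_L$ itself varies with $k$, so $\int^{z}Q\omega_f$ is \emph{not} equal to $\int_{L'}Q\log_f\langle x-zy\rangle\,d\mu_{L}$ evaluated at $k_0$; they differ by the derivative at $k_0$ of $k\mapsto\int_{L'}Q\,d\mu_{L}$. Hence your two log-terms do not individually equal the two semidefinite integrals. Since this discrepancy is identical for $z_\varphi$ and $\bar z_\varphi$ (the lattice $L_{\Psi_\varphi}$ is the same), it cancels in the difference and your final formula survives, but the identification should be asserted only for the difference, not term by term.
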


\begin{proof} We sketch the proof, following closely \cite[Theorem 5.3]{Masdeu} (but see Remark \ref{rem}). 
Thanks to the congruence conditions imposed to $j$, have 
\[
L_p(f/K,\Psi_\varphi,\infty,k_0/2+j)=\int_G \alpha^{j}d\mu_{f,\Psi_\varphi,\infty}(\alpha),\]
where now $\alpha^j$ is the usual $j$-fold product of $\alpha$ by itself, 
and therefore the above integral vanishes thanks to \cite[Lemma 5.1]{Masdeu}. 
For the value of the derivative, we begin by observing that, thanks to 
the congruence conditions imposed to $j$, we have $\langle\alpha\rangle^j=\alpha^j$, and therefore 
\[L_p'(f/K,\Psi_\varphi,\infty,s)_{\vert s=\frac{k_0}{2}+j}= 
\int_G\log_f(\langle\alpha\rangle)\langle\alpha\rangle^jd\mu_{f,\Psi_\varphi,\infty}(\alpha).
\] 
Let now $\mu_f$ the measure on $\PP^1(\Q_p)$ attached to $f$ in \cite[Proposition 9]{Tei} using the 
harmonic cocycle attached to $f$. Then we have 
\[\begin{split}
\int_G\log_f(\langle\alpha\rangle)\langle\alpha\rangle^jd\mu_{f,\Psi_\varphi,\infty}(\alpha)&=\int_{\PP^1(\Q_p)}\log_f\left(\frac{x-z_\varphi}{x-\bar{z}_\varphi}\right)
\cdot\left(\frac{x-z_\varphi}{x-\bar{z}_\varphi}\right)^jP_{\Psi_\varphi}^{m_0}(x)d\mu_f(x)\\
&=\int_{\PP^1(\Q_p)}\left(\int_{\bar{z}_\varphi}^{z_\varphi}\frac{dz}{z-x}\right)
\cdot\left(\frac{x-z_\varphi}{x-\bar{z}_{\varphi}}\right)^jP_{\Psi_\varphi}^{m_0}(x)d\mu_f(x)\\
&= 
\int_{\bar{z}_\varphi}^{z_\varphi}\left(\int_{\PP^1(\Q_p)}\frac{1}{z-x}
\cdot\left(\frac{x-z_\varphi}{x-\bar{z}_\varphi}\right)^j P_{\Psi_\varphi}^{m_0}(x)d\mu_f(x)\right)dz\\
&= 
\int_{\bar{z}_\varphi}^{z_\varphi}\left(\int_{\PP^1(\Q_p)}\frac{d\mu_f(x)}{z-x}\right)
\cdot\left(\frac{z-z_\varphi}{z-\bar{z}_\varphi}\right)^jP_{\Psi_\varphi}^{m_0}(z)dz\\
&=\int_{\bar{z}_\varphi}^{z_\varphi}
f(z)\left(\frac{z-z_\varphi}{z-\bar{z}_\varphi}\right)^j P_{\Psi_\varphi}^{m_0}(z)dz
\end{split}\]
where the first equality follows from the definition of the $p$-adic $L$-function 
in \cite[\S 2.4]{BDIS}, the second equality follows from the definition of Coleman 
integral, the third follows from the fact that we can reverse the order of integration by applying the reasoning in the proof of Theorem 4 of \cite{Tei}, the fourth from the fact that
$$\int_{\PP^1(\Q_p)}\frac{1}{z-x}
\cdot\left(\frac{x-z_\Psi}{x-\bar{z}_{\Psi}}\right)^j P_\Psi^{m_0}(x)d\mu_f(x)=\int_{\PP^1(\Q_p)}\frac{1}{z-x}
\cdot\left(\frac{z-z_\Psi}{z-\bar{z}_{\Psi}}\right)^j P_\Psi^{m_0}(z)d\mu_f(x),$$
since the two functions inside the integral differ by a polynomial 
of degree at most $n_0$ in $x$, and the last equality follows from Teitelbaum's $p$-adic Poisson inversion formula (we refer to 
the proof of \cite[Theorem 5.3]{Masdeu} and \cite[Theorem 3.5]{BDIS} for details).
Combining the above equations we find:
\[\begin{split}L_p'(f/K,\Psi_\varphi,\infty,s)_{\vert s=\frac{k_0}{2}+j}&=\int_{\bar{z}_\varphi}^{z_\varphi}
f(z)\left(\frac{z-z_\varphi}{z-\bar{z}_\varphi}\right)^j P_{\Psi_\varphi}^{m_0}(z)dz\\
&=A_{\Psi_\varphi}^{m_0}
\int_{\bar{z}_\varphi}^{z_\varphi}
f(z)(z-z_\varphi)^{m_0+j}(z-\bar{z}_\varphi)^{m_0-j}dz\\
&=A_{\Psi_\varphi}^{m_0}
\int_{\bar{z}_{\varphi}}^{z_{\varphi}}
f(z)Q_{\Psi_\varphi}^{(j)}dz\\
&=A_{\Psi_\varphi}^{m_0}\left(\int^{z_\varphi}Q_{\Psi_\varphi}^{(j)}\omega_f-\int^{\bar{z}_\varphi}Q_{\Psi_\varphi}^{(j)}\omega_f\right).\end{split}
\]
The result follows then from Propositions \ref{prop6.6} and Proposition \ref{prop6.5}.\end{proof}

\begin{remark}\label{rem}
It seem to the authors that 
\cite[Theorem 5.3]{Masdeu} only works under the congruence condition, $j\equiv0\pmod{p+1}$. In the general case we
have the equality 
\[
L_p(f/K,\Psi_\varphi,\infty,k_0/2+j)=\int_G \langle\alpha\rangle^{j}d\mu_{f,\Psi_\varphi,\infty}(\alpha),\]
where now the function $\alpha\mapsto\langle\alpha\rangle^j$ is locally analytic, 
and is a polynomial only under the congruence conditions on $j$ considered above. 
Therefore, if $j$ does not satisfy the congruence conditions $j\equiv0\pmod{p+1}$ then
one can not directly apply \cite[Lemma 5.1]{Masdeu} to conclude that the value of the 
$p$-adic $L$-function at $k_0/2+j$ vanishes. 
\end{remark}

Recall that we denoted by $K_\infty$ the maximal anticyclotomic extension of $K$ which is unramified outside $p$, by $\tilde{G}$ the Galois group $\Gal(K_\infty/K)$ and by $\Delta$ the Galois group $\Gal(H/K)$. Class field theory implies that the group $G$ can be identified with $\Gal(K_\infty/H)$. Let $\mathrm{Emb}_0(\cO_K)$ be the set of $\Gamma$-conjugacy classes of pairs $(\Psi,\star)$ where $\Psi$ is an optimal embedding and $\star\in\PP_1(\Q_p)$ a base point. The action of $\mathcal{W}\times \Delta$ on $\mathrm{Emb}(\cO_K)$ lifts to a simply transitive action of $\mathcal{W}\times\tilde{G}$ on 
$\mathrm{Emb}_0(\cO_K)$ such that $G$ acts trivially on $\mathrm{Emb}(\cO_K)$. Using this action the distribution $\mu_{f,\Psi,\star}$ on $G$ can be canonically extended to a distribution on $\tilde{G}$ denoted $\mu_{f,K,\xi}$ where $\xi=(\Psi,\star)\in\mathrm{Emb}_0(\cO_K)$ (see \cite[Section 2.5]{BDIS}). 
This distribution depends on the choice of $(\Psi,\star)$ only up to translation by an element of $\tilde{G}$, 
and up to multiplication by $-\omega_p=\pm1$, the negative of the sign of the Atkin-Lehner involution $W_p$ acting on $f$ 
(see \cite[Lemma 2.15]{BDIS}). 

Let $\{\delta_1,\dots,\delta_h\}$ be a set of representatives of the elements of $\Delta$ in $\tilde{G}$, and write 
\[(\Psi_i,\star_i):=\delta_i(\Psi,\star).\] 
Let $\chi\colon \tilde{G}\rightarrow\overline{\Q}_p^\times$ be a continuous character of finite order. We can define the \emph{anti-cyclotomic $p$-adic $L$-function attached to $f$ and $K$ twisted by $\chi$} as
$$L_p(f/K,\xi,\chi,s)=\int_{\tilde{G}}\chi(\alpha)\langle\alpha\rangle^{s-\frac{k_0}{2}}d\mu_{f,K,\xi}(\alpha).$$
If $\chi$ factors through $\Delta$, $L_p(f/K,\xi,\chi,s)$ can be written as a twisted sum of partial $L$-functions
$$L_p(f/K,\xi,\chi,s)=\sum_{i=1}^h \chi(\delta_i)L_p(f/K,\Psi_i,\star_i,s).$$

Since $\mathcal{W}\times\tilde{G}$ acts simply transitively on $\mathrm{Emb}_0(\cO_K)$, for every pair $(\Psi_i,\star_i)$ in the previous sum, there exists a unique $\alpha_i\in\mathcal{W}\times G\subseteq\mathcal{W}\times\tilde{G}$ such that $(\Psi_i,\star_i)=\alpha_i(\Psi_i,\infty)$. If we assume that $\alpha_i\in G=K_{p,1}^\times$, then we have $L_p(f/K,\Psi_i,\star_i,s)=(\alpha_i)^{s-\frac{k_0}{2}}L_p(f/K,\Psi_i,\infty,s)$. We can always do this since the $\Psi_i$'s are in the same $\mathcal{W}$-orbit and, for $w\in\mathcal{W}$
\[ L_p(f/K,w\xi,\chi,s)=\pm L_p(f/K,\xi,\chi,s).\]

Thus, up to sign, we can express the first derivative of the anticyclotomic $p$-adic $L$-function as an explicit combination of values of the Abel-Jacobi images of the cycles $\Delta_{\varphi_i}$. Here $\varphi_i$ denotes the isogeny $A_0\rightarrow A_{\Psi_i}$ associated to $\Psi_i$.

\begin{theorem}\label{TheoremA} Let $\chi:\tilde{G}\rightarrow\bar\Q_p^\times$ be a character factoring through $\Delta$. Then for every integer $j$ such that $-n_0/2\leq j \leq n_0/2$ and $j\equiv0\pmod{p+1}$, we have 
\footnotesize{\[L_p'\left(f/K,\xi,\chi,k_0/2+j\right)=\sum_{i=1}^h\chi(\delta_i)\alpha_i^j
\frac{A_{\Psi_{\varphi_i}}^{m_0}}{\deg(\varphi_i)}\left(\AJ_p(\Delta_{\varphi_i})(f\otimes v_{{\varphi_i}}^{(j)})-\omega_p\cdot
\AJ_p(\Delta_{\bar{{\varphi_i}}})(f\otimes \bar{v}_{{\varphi_i}}^{(j)})
\right).\]}\end{theorem}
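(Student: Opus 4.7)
The plan is to assemble Theorem \ref{TheoremA} as a formal corollary of Proposition \ref{prop6.1}, the decomposition of $L_p(f/K,\xi,\chi,s)$ over representatives $\{\delta_i\}$ of $\Delta$, and the translation formula $L_p(f/K,\Psi_i,\star_i,s)=\alpha_i^{s-k_0/2}L_p(f/K,\Psi_i,\infty,s)$ that was isolated just before the theorem.

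First I would expand the definition, using that $\chi$ factors through $\Delta$:
\[
L_p(f/K,\xi,\chi,s)=\sum_{i=1}^h\chi(\delta_i)\,L_p(f/K,\Psi_i,\star_i,s)
=\sum_{i=1}^h\chi(\delta_i)\,\alpha_i^{s-k_0/2}\,L_p(f/K,\Psi_i,\infty,s),
\]
where the second equality uses the assumption, recalled in the excerpt, that each $\alpha_i\in G=K_{p,1}^\times$ (which is harmless up to the overall sign ambiguity $L_p(f/K,w\xi,\chi,s)=\pm L_p(f/K,\xi,\chi,s)$). Because $\Psi_i=\Psi_{\varphi_i}$ by construction, Proposition \ref{prop6.1} applies to each summand.

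Next I would differentiate in $s$ and evaluate at $s=k_0/2+j$. The product rule produces two terms:
\[
\frac{d}{ds}\Bigl(\alpha_i^{s-k_0/2}L_p(f/K,\Psi_i,\infty,s)\Bigr)_{|s=k_0/2+j}
=\log_f(\alpha_i)\alpha_i^j L_p(f/K,\Psi_i,\infty,k_0/2+j)+\alpha_i^j L_p'(f/K,\Psi_i,\infty,k_0/2+j).
\]
Under the congruence $j\equiv0\pmod{p+1}$, Proposition \ref{prop6.1} tells us that $L_p(f/K,\Psi_i,\infty,k_0/2+j)=0$, so the first term vanishes and only the derivative of the partial $L$-function survives. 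Substituting the explicit formula of Proposition \ref{prop6.1} for $L_p'(f/K,\Psi_{\varphi_i},\infty,s)$ at $s=k_0/2+j$ yields exactly
\[
\alpha_i^j\cdot\frac{A_{\Psi_{\varphi_i}}^{m_0}}{\deg(\varphi_i)}\Bigl(\AJ_p(\Delta_{\varphi_i})(f\otimes v_{\varphi_i}^{(j)})-\omega_p\,\AJ_p(\Delta_{\bar\varphi_i})(f\otimes\bar v_{\varphi_i}^{(j)})\Bigr).
\]

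Summing over $i$ with weights $\chi(\delta_i)$ gives the claimed identity. The only genuine subtlety, and what I view as the main bookkeeping obstacle, is ensuring that the normalisation of the representatives $\alpha_i$ (chosen to lie in $G$ rather than in the full $\mathcal{W}\times\tilde G$-orbit) is compatible with the pairs $(\Psi_i,\star_i)=\delta_i(\Psi,\star)$ used to define $\mu_{f,K,\xi}$, and that the sign indeterminacy inherited from $L_p(f/K,w\xi,\chi,s)=\pm L_p(f/K,\xi,\chi,s)$ propagates consistently to both sides of the asserted identity. Everything else is a direct application of results already established in the paper.
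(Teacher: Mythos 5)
Your proposal is correct and is essentially the argument the paper intends: its proof of Theorem \ref{TheoremA} is just the one-line remark that it ``follows directly from the definitions and Proposition \ref{prop6.1}'', and your expansion --- decomposing over the $\delta_i$, inserting the factor $\alpha_i^{s-k_0/2}$, applying the product rule, and killing the extra term via the vanishing $L_p(f/K,\Psi_i,\infty,k_0/2+j)=0$ --- is exactly the bookkeeping that remark suppresses. Your closing caveat about the normalisation of the $\alpha_i$ and the sign ambiguity is a fair observation, but it matches the caveats the paper itself states just before the theorem, so there is no divergence in approach.
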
 

\begin{proof}
This follows directly from the definitions and Proposition \ref{prop6.1}. 
\end{proof}

\begin{remark}
The interpolation properties satisfied by the $p$-adic $L$-function 
$L_p(f/K,\xi,\chi,s)$ and the value of the complex $L$-function $L_K(f,\chi,s)$ at the central critical point $s=k_0/2$ are well-known and carefully discussed in \cite{BDIS}, to which the reader is referred to for details. In particular, in our setting 
both the $p$-adic $L$-function 
and the complex $L$-function 
vanish at $s=k_0/2$. 
It is an interesting task to investigate similar interpolation properties satisfied by the $p$-adic $L$-function $L_p(f/K,\xi,\chi,s)$ and the complex $L$-function $L_K(f_k,\chi,s)$ at 
integers $s=k_0/2+j$ with $n_0/2\leq j\leq n_0/2$.  
\end{remark} 

\bibliographystyle{amsalpha}
\bibliography{references}

\providecommand{\bysame}{\leavevmode\hbox to3em{\hrulefill}\thinspace}
\providecommand{\MR}{\relax\ifhmode\unskip\space\fi MR }
\providecommand{\MRhref}[2]{%
  \href{http://www.ams.org/mathscinet-getitem?mr=#1}{#2}
}
\providecommand{\href}[2]{#2}
\begin{thebibliography}{BDIS02}

\bibitem[BC]{BrCo}
O.~Brinon and B.~Conrad, \emph{Cmi summer school notes on p-adic hodge theory},
  preprint available \url{http://math.stanford.edu/~conrad/papers/notes.pdf}.

\bibitem[BC91]{BC}
J.-F. Boutot and H.~Carayol, \emph{Uniformisation {$p$}-adique des courbes de
  {S}himura: les th\'{e}or\`emes de \v{C}erednik et de {D}rinfel\cprime d},
  Ast\'{e}risque (1991), no.~196-197, 7, 45--158 (1992), Courbes modulaires et
  courbes de Shimura (Orsay, 1987/1988). \MR{1141456}

\bibitem[BD96]{BD-Heegner}
M.~Bertolini and H.~Darmon, \emph{Heegner points on {M}umford-{T}ate curves},
  Invent. Math. \textbf{126} (1996), no.~3, 413--456.

\bibitem[BD98]{BD-CD}
\bysame, \emph{Heegner points, {$p$}-adic {$L$}-functions, and the
  {C}erednik-{D}rinfeld uniformization}, Invent. Math. \textbf{131} (1998),
  no.~3, 453--491. \MR{1614543}

\bibitem[BD07]{BD}
\bysame, \emph{Hida families and rational points on elliptic curves}, Invent.
  Math. \textbf{168} (2007), no.~2, 371--431.

\bibitem[BDIS02]{BDIS}
M.~Bertolini, H.~Darmon, A.~Iovita, and M.~Spiess, \emph{Teitelbaum's
  exceptional zero conjecture in the anticyclotomic setting}, Amer. J. Math.
  \textbf{124} (2002), no.~2, 411--449.

\bibitem[BDP13]{BDP}
M.~Bertolini, H.~Darmon, and K.~Prasanna, \emph{Generalized {H}eegner cycles
  and {$p$}-adic {R}ankin {$L$}-series}, Duke Math. J. \textbf{162} (2013),
  no.~6, 1033--1148, With an appendix by Brian Conrad. \MR{3053566}

\bibitem[Bes95]{Besser}
A.~Besser, \emph{C{M} cycles over {S}himura curves}, J. Algebraic Geom.
  \textbf{4} (1995), no.~4, 659--691. \MR{1339843}

\bibitem[BK90]{BK}
S.~Bloch and K.~Kato, \emph{{$L$}-functions and {T}amagawa numbers of motives},
  The {G}rothendieck {F}estschrift, {V}ol. {I}, Progr. Math., vol.~86,
  Birkh\"{a}user Boston, Boston, MA, 1990, pp.~333--400. \MR{1086888}

\bibitem[Col82]{Col}
R.~F. Coleman, \emph{Dilogarithms, regulators and {$p$}-adic {$L$}-functions},
  Invent. Math. \textbf{69} (1982), no.~2, 171--208. \MR{674400}

\bibitem[Dar04]{DarmonBook}
H.~Darmon, \emph{Rational points on modular elliptic curves}, CBMS Regional
  Conference Series in Mathematics, vol. 101, Published for the Conference
  Board of the Mathematical Sciences, Washington, DC; by the American
  Mathematical Society, Providence, RI, 2004. \MR{2020572}

\bibitem[DM91]{DM}
C.~Deninger and J.~Murre, \emph{Motivic decomposition of abelian schemes and
  the {F}ourier transform}, J. Reine Angew. Math. \textbf{422} (1991),
  201--219. \MR{1133323}

\bibitem[dS89]{deshalit}
E.~de~Shalit, \emph{Eichler cohomology and periods of modular forms on
  {$p$}-adic {S}chottky groups}, J. Reine Angew. Math. \textbf{400} (1989),
  3--31. \MR{1013723}

\bibitem[DT08]{DasgTeit}
S.~Dasgupta and J.~Teitelbaum, \emph{The {$p$}-adic upper half plane},
  {$p$}-adic geometry, Univ. Lecture Ser., vol.~45, Amer. Math. Soc.,
  Providence, RI, 2008, pp.~65--121. \MR{2482346}

\bibitem[FO]{FO}
J.M. Fontaine and Y.~Ouyang, \emph{Theory of p-adic galois representations},
  preprint available \url{https://www.math.u-psud.fr/~fontaine/galoisrep.pdf}.

\bibitem[HB15]{Brooks}
E.~Hunter~Brooks, \emph{Shimura curves and special values of {$p$}-adic
  {$L$}-functions}, Int. Math. Res. Not. IMRN (2015), no.~12, 4177--4241.
  \MR{3356751}

\bibitem[IS03]{IS}
A.~Iovita and M.~Spie\ss, \emph{Derivatives of {$p$}-adic {$L$}-functions,
  {H}eegner cycles and monodromy modules attached to modular forms}, Invent.
  Math. \textbf{154} (2003), no.~2, 333--384. \MR{2013784}

\bibitem[Jan90]{Jannsen}
U.~Jannsen, \emph{Mixed motives and algebraic {$K$}-theory}, Lecture Notes in
  Mathematics, vol. 1400, Springer-Verlag, Berlin, 1990, With appendices by S.
  Bloch and C. Schoen. \MR{1043451}

\bibitem[JL85]{JL}
B.~W. Jordan and R.~A. Livn\'e, \emph{Local {D}iophantine properties of
  {S}himura curves}, Math. Ann. \textbf{270} (1985), no.~2, 235--248.
  \MR{771981}

\bibitem[LV12]{LV}
M.~Longo and S.~Vigni, \emph{A note on control theorems for quaternionic {H}ida
  families of modular forms}, Int. J. Number Theory \textbf{8} (2012), no.~6,
  1425--1462. \MR{2965758}

\bibitem[Mas12]{Masdeu}
M.~Masdeu, \emph{C{M} cycles on {S}himura curves, and {$p$}-adic
  {$L$}-functions}, Compos. Math. \textbf{148} (2012), no.~4, 1003--1032.
  \MR{2956034}

\bibitem[Mil80]{Milne}
J.~S. Milne, \emph{\'{E}tale cohomology}, Princeton Mathematical Series,
  vol.~33, Princeton University Press, Princeton, N.J., 1980. \MR{559531}

\bibitem[Nek00]{Nekovar}
J.~Nekov\'{a}\v{r}, \emph{{$p$}-adic {A}bel-{J}acobi maps and {$p$}-adic
  heights}, The arithmetic and geometry of algebraic cycles ({B}anff, {AB},
  1998), CRM Proc. Lecture Notes, vol.~24, Amer. Math. Soc., Providence, RI,
  2000, pp.~367--379. \MR{1738867}

\bibitem[Sch94]{Scholl-motives}
A.~J. Scholl, \emph{Classical motives}, Motives ({S}eattle, {WA}, 1991), Proc.
  Sympos. Pure Math., vol.~55, Amer. Math. Soc., Providence, RI, 1994,
  pp.~163--187. \MR{1265529}

\bibitem[Sev14]{Sev}
M.~A. Seveso, \emph{Heegner cycles and derivatives of {$p$}-adic
  {$L$}-functions}, J. Reine Angew. Math. \textbf{686} (2014), 111--148.
  \MR{3176601}

\bibitem[Tei90]{Tei}
Jeremy~T. Teitelbaum, \emph{Values of {$p$}-adic {$L$}-functions and a
  {$p$}-adic {P}oisson kernel}, Invent. Math. \textbf{101} (1990), no.~2,
  395--410. \MR{1062968}

\end{thebibliography}
\end{document}